\documentclass[12pt]{amsart}

\usepackage{mathtools}
\usepackage[english]{babel}
\usepackage{cancel}
\usepackage[most]{tcolorbox}
\usepackage{float}
\usepackage[a4paper,top=35mm,bottom=30mm,left=30mm,right=30mm]{geometry}

\usepackage{latexsym,amsmath,amsthm,amssymb,amsfonts,mathrsfs,enumerate}
\usepackage{xcolor}
\usepackage[normalem]{ulem}
\usepackage{graphicx}
\usepackage[colorlinks=true, allcolors=blue]{hyperref}

\usepackage{tikz}
\usepackage{pgfplots}
\pgfplotsset{compat=1.18}

\newtcolorbox{greybox}{colframe=black,colback=gray!10,coltext=black,boxrule=0.5mm,arc=2mm,width=\linewidth,left=6pt,right=6pt,top=6pt,bottom=6pt,breakable,enhanced
}

\definecolor{caca}{rgb}{0.137,0.3,0.541}
\newtheorem*{thmA}{Theorem (I)}
\newcommand{\refThmA}{(I)\space}
\newcommand{\refThmAnospace}{(I)}
\newtheorem*{thmB}{Theorem (II)}
\newcommand{\refThmB}{(II)\space}

\newtheorem*{thmC}{Theorem (III)}

\newtheorem*{thmD'}{Theorem D'}

\newtheorem{thm}{Theorem}[section]

\newtheorem{prop}[thm]{Proposition}
\newtheorem{cor}[thm]{Corollary}

\newtheorem{lem}[thm]{Lemma}

\newtheorem{dfn}[thm]{Definition}
\newtheorem{rem}[thm]{Remark}
\newtheorem{remark}[thm]{Remark}

\newcommand{\diam}{\operatorname{diam}}
\newcommand{\R}{\mathbb{R}}

\newcommand{\ee}{\mathbf{e}}

\DeclareMathOperator{\width}{width}

\DeclareMathOperator{\Hess}{Hess}

\DeclareMathOperator{\dist}{dist} 
\DeclareMathOperator{\diverg}{div} 

\title[Collapsed translating solitons with planar wings]{Collapsed translating solitons\\with planar wings}
\author[E.S. Gama]{Eddygledson S. Gama}
\address[Gama]{
  Departamento de Matem\'atica,
   Centro de Ci\^encias Exatas e da Natureza, Universidade Federal de Pernambuco, Recife-PE, 50670-901, Brazil.
}
\email{eddygledson.gama@ufpe.br}

\author[F. Mart\'in]{Francisco Mart\'\i{}n}
\address[Mart\'in]{
  Departamento de Geometr\'\i{}a y Topolog\'\i{}a, 
  Instituto de Matem\'aticas IMAG,
  Universidad de Granada,
  18071 Granada, Spain.
}
\email{fmartin@ugr.es}

\author[N.M. M{\o}ller]{Niels M. M{\o}ller}
\address[M{\o}ller]{
  Copenhagen Centre for Geometry and Topology (GeoTop), Department of Mathematical Sciences,
  University of Copenhagen, 
  DK-2100 Copenhagen, Denmark.
}
\email{nmoller@math.ku.dk}

\subjclass[2020]{53E10, 53A10, 35K55, 35J93 (49Q05, 53C42).}

\keywords{Mean curvature flow, translators, self-translating solitons, minimal surfaces, partial differential equations, drift Laplacians, flux invariants, Pohozaev identities}

\begin{document}

\begin{abstract}
    We study complete embedded translating solitons in $\mathbb{R}^3$ with finite genus and finite entropy. We prove that such collapsed translators whose wings are all of planar type (in a weak topological sense) must be vertical flat planes.
    
    In particular, this implies the non-existence result for gluing and doubling constructions involving planes that was conjectured in \cite{HMW-SpruckVol} and announced in \cite{GMM-UniquenessTangentPlanes} as an application of the authors' techniques, under weaker assumptions than previous rigidity results by Khan and Lynch-Tinaglia in \cite{Ilyas,Lynch-Tinaglia,PacoCrelle}.
    
    A key ingredient is a sharp new a priori decay estimate in the exterior problem for the horizontal graphical translator equation, showing that bounded solutions to the nonlinear equation, and their gradients, are controlled by multiples of the Green's function of the linear drift Laplacian, regardless of the size of the given $L^\infty$ data. Combined with a graphical description of the infinite ends and the Neves–Tian flux identity, this yields the main rigidity result. 
    
    As another application, we obtain a full classification of collapsed embedded finite-genus translators of entropy $\lambda < 3$.
    
\end{abstract}

\maketitle
\tableofcontents

\section{Introduction}

A translating soliton (or \emph{translator}) for the mean curvature flow is a
surface $\Sigma^2\subset\mathbb{R}^3$ with $\partial\Sigma = \varnothing$ satisfying, for $\vec H$ the mean curvature vector of $\Sigma$,
\begin{equation}\label{TSE_geom}
        \vec H=\ee_3^{\perp}.
        \end{equation}

Equivalently, the one-parameter family $\Sigma_t=\Sigma+t\ee_3$ evolves by mean
curvature flow. Translators arise naturally as models for Type II singularity
formation and, at the same time, form a distinguished class of complete
weighted minimal surfaces. Their geometry displays a striking mixture of
rigidity and flexibility; see, for instance,
\cite{CSS07,MSHS,HIMW-2} and the references therein.

The global theory in $\mathbb{R}^3$ already contains several fundamental
classification results. Complete graphical translators include the bowl soliton,
the possibly tilted grim reaper cylinders, and the $\Delta$-wings. Building on the
convexity and asymptotic results of Wang and of Spruck--Xiao
\cite{Wang,Spruck-Xiao}, the complete translating graphs in $\mathbb{R}^3$
were classified by Hoffman--Ilmanen--Mart\'{\i}n--White
\cite{HIMW}. Rigidity under asymptotic hypotheses has been studied
from a number of viewpoints. In \cite{GamaMartinHyperplanes}, translators
$C^1$-asymptotic to pairs of half-hyperplanes were classified, while Khan
\cite{Ilyas} obtained a detailed description of the ends of translators with
finite total curvature. More recently, Lynch--Tinaglia
\cite{Lynch-Tinaglia} proved that a complete, properly immersed translator of
finite topology whose ends are $C^1$-asymptotic to vertical planes must itself
be a vertical plane.

The present paper is concerned with stronger classification results, via a different, and much weaker, asymptotic regime. We work with complete embedded \emph{collapsed} translators of finite genus and finite entropy. We remind the reader that complete translators of finite entropy are automatically proper in $\mathbb{R}^3$ \cite{Chini}. In the terminology used here, collapsedness of translators means that they are contained in slabs of finite width (two of the four cases in the classification of projections of convex hulls of translators in \cite{Chini-Moller}), while we do not require any sign on the mean curvature. It turns out that \emph{wings}, i.e. connected components of the surface minus large vertical cylinders, is the right geometric notion for the description of the behavior at infinity of such translators. This point of view was developed in the authors' previous work \cite[Section 4]{GMM-AJM}, where wings were introduced and studied, as mostly topological features, together with their relation to entropy. The concept of wings was subsequently sharpened from a partial differential equations perspective in
\cite[Theorem A]{GMM-UniquenessTangentPlanes}, where uniqueness of limiting planes at
infinite times and a decomposition of collapsed translators into planar and grim
reaper type regions were established. Other rigidity and non-existence results
for collapsed translators, obtained by weighted potential-theoretic methods,
can be found in \cite{IRM}; low-entropy translators in slabs were studied earlier by Chini \cite{ChiniThesis,Chini} and the authors \cite{GMM-AJM}.

A natural question suggested by this structure theory, as pointed out in \cite{HMW-SpruckVol}, is whether a non-flat collapsed translator can have only planar wings. Our first main result gives a negative answer, as already announced in \cite{GMM-UniquenessTangentPlanes}. See Figure \ref{fig:planarwings}.

\begin{thmA}
Let $\Sigma^2\subseteq\mathbb{R}^3$ be a complete, embedded, connected, collapsed translating soliton with finite genus and finite entropy. If every
wing of $\Sigma$ is of planar type, then $\Sigma$ is a vertical plane.
\end{thmA}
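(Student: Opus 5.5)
The plan is to combine three ingredients announced in the abstract: a graphical description of the planar wings as horizontal graphs over vertical half-planes, the sharp exterior decay estimate for the horizontal graphical translator equation, and the Neves--Tian flux identity.

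\textbf{Step 1: set-up.} Rotate so that $\Sigma$ lies in the slab $\{|x_2|\le w\}$. By \cite[Theorem A]{GMM-UniquenessTangentPlanes}, outside a large vertical cylinder $\Sigma$ decomposes into finitely many wings, finiteness coming from finite entropy. Each wing has a unique limiting vertical plane $\{x_2=c_i\}$. Since every wing is of planar type, no grim reaper regions occur. Using the curvature estimates available for embedded finite-genus translators of finite entropy, together with uniqueness of the tangent plane at infinity, I would show that each wing $W_i$ is eventually a horizontal graph
\[
x_2=c_i+u_i(x_1,x_3)
\]
over an exterior region $\Omega_i=\{(x_1,x_3)\in P_i : |(x_1,x_3)|\ge R\}$ of a half-plane $P_i$ in the vertical plane, or over an exterior domain in the whole plane. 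Here $u_i$ is bounded by $w$ and $u_i\to 0$ at infinity in the appropriate sense.

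\textbf{Step 2: decay.} The function $u_i$ solves the horizontal translator equation, which is a quasilinear perturbation of the drift Laplacian $\Delta u+\partial_3 u=0$. I would apply the paper's a priori exterior estimate: bounded solutions with bounded boundary data satisfy
\[
|u_i|+|\nabla u_i|\le C\,G,
\]
where $G$ is the Green's function of $L=\Delta+\partial_3$. Up to polynomial factors, $G\sim r^{-1/2}e^{-(r+x_3)/2}$. This gives exponential decay of the wings except along the downward direction $x_3\to-\infty$. There the decay is only polynomial, but it is strong enough to integrate against the relevant weights.

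\textbf{Step 3: flux.} Apply the Neves--Tian flux/Pohozaev identity. Pair the first variation of the weighted area $\int e^{x_3}$ with a suitable vector field, for instance $X=\ee_3$ or the position field corrected by the translation weight, on $\Sigma\cap B_\rho$, and let $\rho\to\infty$. The interior term is a nonnegative quantity such as $\int_\Sigma |\vec H|^2e^{x_3}$ or $\int_\Sigma \langle\nu,\ee_3\rangle^2e^{x_3}$, and it is finite by finite entropy. The boundary terms are computed on the graphical wings using the decay from Step 2. Flat vertical planes contribute zero, and the corrections are bounded by $\int_{\partial} G\,|\nabla G|\,e^{x_3}$. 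These corrections vanish as $\rho\to\infty$ because of the factor $e^{-(r+x_3)/2}$ squared against the weight $e^{x_3}$. Hence $\int_\Sigma \langle\nu,\ee_3\rangle^2e^{x_3}=0$, so $\Sigma$ is vertical. A vertical translator is a grim reaper cylinder or a plane, and since there are no grim reaper wings, $\Sigma$ is a vertical plane by connectedness and embeddedness.

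\textbf{Main obstacle.} The key difficulty is Step 2 together with the boundary-term bookkeeping in Step 3. The estimate must hold regardless of the size of $\|u_i\|_\infty$, and it must control the gradient as well. I would try to build barriers of the form $A\,G\pm\varepsilon$ with a comparison principle for the quasilinear operator, using the smallness of $|\nabla u_i|$ far out that comes from the graphical convergence. I expect the downward direction, where $G$ decays slowly, to be the delicate part of the flux computation.
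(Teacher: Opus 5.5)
Your overall plan (graphical ends, the exterior estimate of Theorem \ref{thm:optimal_decay}, a vertical flux identity) matches the paper's. However, two steps fail as written.

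\textbf{Step 3 uses the wrong identity.} The first variation of $\int e^{x_3}\,dA$ along $X=\ee_3$ is
\[
\int_{\partial M}e^{x_3}\langle\eta,\ee_3\rangle\,ds=\int_M e^{x_3}\,dA .
\]
So the interior term is the weighted area itself, not $\int_M|\vec H|^2e^{x_3}\,dA$, and it diverges along any exhaustion of $\Sigma$. Likewise, a flat vertical plane does not contribute zero: $\int_{S_\rho}e^{x_3}\mu_3\,ds_0=\int_{B_\rho}e^{x_3}\,dx_2\,dx_3\to\infty$. Subtracting the planes leaves only the weighted area-excess identity \eqref{eq:weighted-area-excess}, which has no sign on the core and gives no rigidity. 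The weighted position field is no better, since its interior density $e^{x_3}(2+x_3-p_3)$ has no sign.

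What closes the argument is the \emph{unweighted} Neves--Tian identity $\int_{M_R}|\vec H|^2\,dA=\int_{\partial M_R}\langle\eta,\ee_3\rangle\,ds$ on $M_R=\Sigma\cap\mathcal{C}_R$. On a graphical end, $\langle\eta,\ee_3\rangle\,ds_\Sigma=(W\mu_3-u_\mu u_3/W)\,ds_0$. The plane term $\int_{S_R}\mu_3\,ds_0$ vanishes by symmetry, and the rest is at most $\tfrac32\int_{S_R}|\nabla u|^2\,ds_0\le CRK_0(R/2)^2I_0(R)=O(R^{-1/2})$. There is no weight $e^{x_3}$ to help here. The factor $e^{-x_3}$ in $G_L^2$ peaks in the downward direction and is only just balanced by $K_0(R/2)^2$, so your bookkeeping ``squared against $e^{x_3}$'' belongs to an identity that proves nothing. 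Two smaller points: finiteness of $\int_\Sigma|\vec H|^2$ is an output of this identity, not of finite entropy. And once $\langle\nu,\ee_3\rangle\equiv0$, you get $\vec H\equiv0$ and hence a vertical plane directly; grim reaper cylinders are not vertical.

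\textbf{Step 1 does not provide what Steps 2--3 need.} Theorem \ref{thm:optimal_decay} is an exterior-domain result. Its proof needs a compact inner boundary on which the barriers $b_{a,R_0}\ge1$ dominate the data, plus the maximum principle on $B_R(p_{a,R_0})\setminus B_{R_0}(0)$. For a wing written as a graph over a half-plane region, the boundary line is non-compact with uncontrolled data, and no Green's function bound can be expected. Moreover, the boundary of your truncation $\Sigma\cap B_\rho$ also meets $\Sigma$ far above and far below the core, inside every vertical cylinder, where there are no wings at all.

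The missing step runs as follows. Put the slab as $\{|x_1|\le a\}$. The structure theory of \cite{GMM-UniquenessTangentPlanes}, with all wings planar, gives $\langle\nu,\ee_1\rangle\neq0$ outside a large cylinder $\mathcal{C}_{R_0}$ whose axis is normal to the slab. On each component $E$ of $\Sigma\setminus\mathcal{C}_{R_0}$, the projection $\pi$ is then a local diffeomorphism. It is also proper, by the slab bound plus properness from finite entropy. Hence it is a finite covering of the connected annulus $A_{R_0}$. Embeddedness keeps the sheets ordered in $x_1$, so $E$ is a single bounded graph over all of $A_{R_0}$; this is how left and right planar wings pair up. Your hedge ``or over an exterior domain in the whole plane'' names the needed conclusion, but your sketch gives no mechanism for it.

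Relatedly, your fallback of barriers $AG_L\pm\varepsilon$ for reproving the exterior estimate would not work. $G_L$ is not gradient-convex, so these are not supersolutions of the nonlinear equation, and comparison on the unbounded domain first requires a limit at infinity. The paper gets around both problems with the tilted grim-reaper family $b_{a,R_0}$.
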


\begin{remark}\label{rem:planar_wings}
We recall from \cite[Definition 7.3]{GMM-AJM} (see also \cite{GMM-UniquenessTangentPlanes}) that wings of planar type for a collapsed translator are defined as those connected components of $\Sigma\setminus \mathrm{Cyl}_R(0)$, $R \gg 1$ and $\mathrm{Cyl}_R(0)=\{(x_1,x_2,x_3): x_1^2+x_2^2\leq R^2\}$, for which the $x_3$-coordinate is unbounded in both directions on all curves obtained from slicing perpendicularly to the slab containing $\Sigma$. See Figure \ref{fig:planarwings}, where the slice curves seen in the plane of the paper have this property.

 In particular, translators whose ends are bounded graphs over non-compact vertical annuli (see Definition \ref{dfn:collapsed_graphical_end}) are trivially examples of surfaces whose wings are all of planar type.
\end{remark}

Theorem \refThmA is proved below as Theorem~\ref{thm:planar_wings}. The point is that
``planar wing'' is substantially weaker than assuming from the outset that an end is strongly asymptotic to a vertical plane, which would require a priori knowing that the solution is a graph that $C^1$-converges to a constant. In particular, no decay rates, finite total curvature, or $C^1$-asymptotic expansions are demanded by our hypotheses. Thus Theorem \refThmA holds significantly more generally than the results in \cite{Lynch-Tinaglia} (see Remark \ref{rem:Neves-Tian}): Lynch--Tinaglia's rigidity theorem starts from strong $C^1$-asymptotic assumptions on each end, whereas in the present context, the required limits and decay are produced as part of the argument from just the wing structure that is available under very weak assumptions, that are essentially topological (as in Remark \ref{rem:planar_wings}). Namely wings are well-defined for the whole class of finite-genus, finite-entropy collapsed translators (see Figure \ref{fig:planarwings}). Together with Khan's
work in \cite{Ilyas}, this also gives a separate route to the rigidity phenomenon for embedded translators in the special case of finite total curvature.

\begin{figure}[h!]
\centering
\setlength{\fboxrule}{0pt}
\fboxsep=5pt
\fbox{

\includegraphics[width=0.22\linewidth]{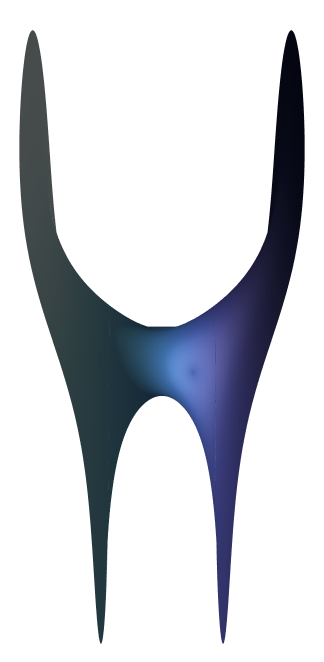}
\includegraphics[width=0.3\linewidth]{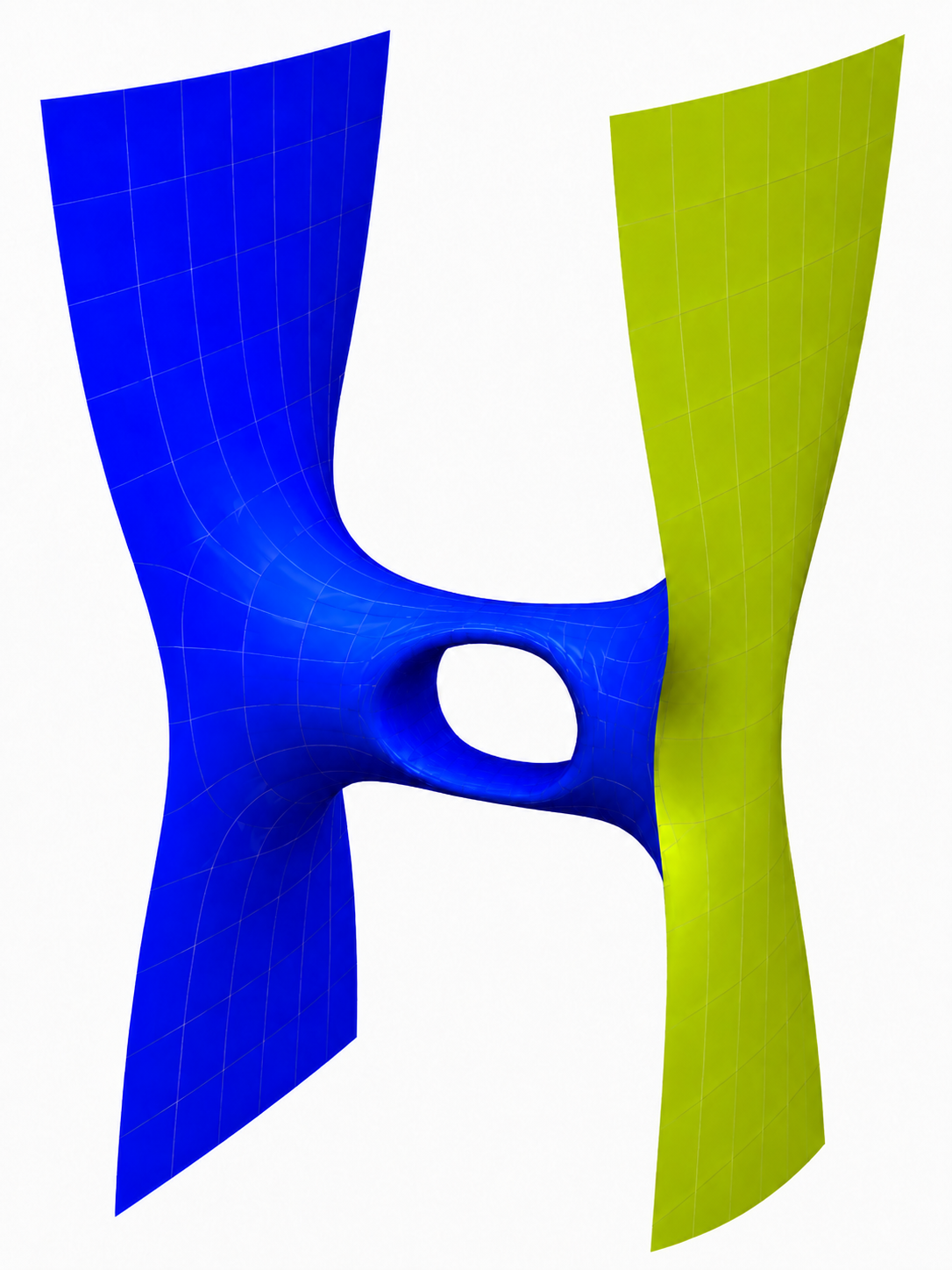}}
\caption{Schematic configurations of some hypothetical translating solitons which would have only \emph{wings of planar type} understood in the weak differential-topological sense as in Remark \ref{rem:planar_wings}, yet reflect the anisotropy of the equation \eqref{TSE_geom} by not being asymptotic to the same planes in all directions, i.e. they are not assumed \emph{a priori} to satisfy any strongly planar asymptotic conditions in the sense of \cite{Ilyas,Lynch-Tinaglia}. Theorem~\ref{thm:planar_wings} shows that just assuming a given connected collapsed translating soliton of finite genus and finite entropy only has planar wings, then it must in fact be a vertical flat plane. This non-existence result was conjectured in 2023 in \cite{HMW-SpruckVol} and announced in \cite{GMM-UniquenessTangentPlanes}.}
\label{fig:planarwings}
\end{figure}

As graphicality of the ends will follow from the geometric methods in \cite{GMM-AJM,GMM-UniquenessTangentPlanes}, the main new analytic input consists of new results on an exterior problem for the horizontal graphical translator equation. Suppose therefore that
\begin{equation}\label{eq:horiz_graph}
X(x_2,x_3)=\bigl(u(x_2,x_3),x_2,x_3\bigr),
\qquad
u:A_{R_0}:=\mathbb{R}^2\setminus B_{R_0}(0)\longrightarrow\mathbb{R},
\end{equation}
where $u$ is smooth and globally bounded, the boundedness being motivated geometrically from the assumed collapsedness. 
Here we have made use of the translation invariance of the translator equation $\eqref{TSE_geom}$ and moved the point of interest to the origin in $\mathbb{R}^3$. Writing $W=\sqrt{1+|\nabla u|^2}$, the translator
equation is
\begin{equation} \label{TSE_divform}
\operatorname{div}\left(\frac{\nabla u}{W}\right)
=-\frac{\partial_3u}{W},
\end{equation}
or equivalently the weight can be brought inside the divergence,
\begin{equation}
\operatorname{div}\left(e^{x_3}\frac{\nabla u}{W}\right)=0.
\label{eq:weighted-translator-intro}
\end{equation}
The equation's linearization at a vertical plane is the drift Laplacian
$$
        L=\Delta+\ee_3\cdot\nabla,
$$
whose Green's function on $\mathbb{R}^2$ is
$$
G_L(x)=\frac{1}{2\pi}e^{-x_3/2}K_0\left(\frac{|x|}{2}\right),
$$
where $K_0$ is the modified Bessel function of the second kind. We prove that
this linear Green's function gives the sharp pointwise scale for every bounded
nonlinear exterior solution, with no smallness assumption on the $L^\infty$ data.

\begin{thmB}
Let $u$ be a smooth bounded solution of the horizontal graphical translator
equation on $\mathbb{R}^2\setminus\Omega$, where $0\in\Omega$ is a bounded set. Then
there exists $u_\infty\in\mathbb{R}$:
\begin{equation}
        |u(x)-u_\infty|\le C\,G_L(x), \quad \quad\dist(x,\Omega) > 1,
\end{equation}
\begin{equation}
        |\nabla u(x)|\le C\,G_L(x), \quad \quad\dist(x,\Omega) > 1.
\end{equation}
In particular, the natural sharp asymptotical expansion holds true:
\begin{equation}
u(x)=u_\infty+
O\!\left(|x|^{-1/2}
\exp\!\left[-\frac{x_3+|x|}{2}\right]\right)
\quad\text{as}\quad|x|\to\infty.
\end{equation}

\end{thmB}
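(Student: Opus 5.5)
The strategy is to reduce the nonlinear problem to the linear drift Laplacian $L=\Delta+\ee_3\cdot\nabla$, and then compare with the Green's function $G_L$ on exterior domains.

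\emph{Step 1: gradient decay.} Since $u$ is bounded, interior gradient estimates for the translator equation on unit discs apply. These are standard for the mean-curvature-type operator with bounded lower-order drift, for instance via the graph being a weighted minimal surface, plus curvature estimates from boundedness and stability of graphs. They give $|\nabla u|\le C$ and $|\nabla^2 u|\le C$ on $\{\dist(x,\Omega)>1/2\}$. Next we show $\nabla u\to 0$ at infinity. Take translates $u(\cdot+p_k)$ with $|p_k|\to\infty$; by compactness these subconverge to a bounded entire solution of the horizontal graphical translator equation on $\mathbb{R}^2$. Such a solution gives a complete translating graph contained in a slab. By the classification of complete translating graphs \cite{HIMW} (or directly a Liouville argument), it must be constant. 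Hence $|\nabla u|+|\nabla^2u|\to0$ as $|x|\to\infty$.

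\emph{Step 2: linearization with small coefficients.} Write \eqref{TSE_divform} as
\[
a^{ij}(\nabla u)\,\partial_{ij}u+\partial_3 u=0,\qquad a^{ij}=\delta_{ij}-\frac{\partial_iu\,\partial_ju}{W^2}.
\]
Outside a large ball $B_{R_1}$, this is $L u=E$ with $E=(\partial_iu\partial_ju/W^2)\partial_{ij}u$, which is quadratically small. Differentiating the equation, each $v=\partial_k u$ satisfies a linear equation $\partial_i(b^{ij}\partial_j v)+\partial_3 v=0$ (in the weighted divergence form), with $b^{ij}\to\delta_{ij}$. Thus $u$ and $\nabla u$ solve uniformly elliptic equations that are $o(1)$-perturbations of $L$ at infinity.

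\emph{Step 3: existence of $u_\infty$ and the barrier comparison.} This is the main obstacle: we need barriers of the form $C\,G_L$ for operators that are only perturbations of $L$, and $G_L$ decays anisotropically, exponentially fast except along the negative $x_3$ direction, where it decays only like $|x|^{-1/2}$. The plan is to conjugate by $e^{x_3/2}$: setting $w=e^{x_3/2}(u-c)$ turns $L$ into $\Delta-\tfrac14$, whose radial Green's function is $K_0(|x|/2)/(2\pi)$. For the perturbed operator, I would use supersolutions $\psi_\pm=e^{-x_3/2}K_0\bigl((1\mp\varepsilon)|x|/2\bigr)$ and check that $\tilde L\psi\le0$ for $|x|\ge R_1(\varepsilon)$, using that the coefficient error is $o(1)$. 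To obtain the sharp rate rather than an $\varepsilon$-loss, the error must be integrable. I would bootstrap: the first pass yields $|\nabla u|\lesssim e^{-(1-\varepsilon)(x_3+|x|)/2}$, which makes $E=O(|\nabla u|^2|\nabla^2 u|)$ decay like $G_L^{2-}$. The representation formula $u-u_\infty=\int G_L(x-y)E(y)\,dy$ plus boundary terms on $\partial B_{R_1}$ then gives exactly $C\,G_L$. Here $u_\infty$ is identified as the limit along rays, which exists because the barrier shows oscillation decay. The convolution estimate $\int G_L(x-y)G_L(y)^{2-\varepsilon}dy\lesssim G_L(x)$ follows from $G_L(x-y)G_L(y)\lesssim G_L(x)\cdot(\text{integrable})$, using that $x_3+|x|$ is subadditive-compatible, since $|x|\le|x-y|+|y|$.

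\emph{Step 4: gradient bound and expansion.} Apply the same representation to $\partial_k u$, or use scaled interior Schauder estimates on unit balls together with the Harnack comparability of $G_L$ on unit balls, to get $|\nabla u|\le CG_L$. The expansion then follows from $K_0(r)\sim\sqrt{\pi/(2r)}\,e^{-r}$.
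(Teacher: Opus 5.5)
There is a genuine gap at exactly the step you flag as the main obstacle: the existence of $u_\infty$ is assumed, not proved, and the argument as written is circular. Comparing $u-c$ with $C\psi$ on $\mathbb{R}^2\setminus B_{R_1}$ by exhausting with annuli requires $|u-c|$ to be small on the large outer circles wherever $\psi$ is small. That is, it presupposes $u\to c$ at infinity. The boundary term on $\partial B_\rho$ in your representation formula needs the same thing.

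Without this, the maximum principle genuinely fails for bounded solutions on exterior domains. For example, $1$ minus the decaying $L$-harmonic extension of the boundary value $1$ on $\partial B_{R_1}$ is bounded, $L$-harmonic, vanishes on $\partial B_{R_1}$, and is not zero. So the barrier cannot be what ``shows oscillation decay''. Limits along rays would not suffice either (Remark \ref{rem:ray_counterex}). Your Step 1 gives only $\nabla u\to 0$ with no rate, which cannot be integrated along rays. Comparison is legitimate for $\partial_k u$, which does tend to $0$, and integrating such a bound vertically inside slanted half-planes could produce $u_\infty$. But you do not do this.

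The paper gets $u_\infty$ from the quantitative upward-capsule estimates of Proposition \ref{prop:poelsekapsel}. In a wedge containing $H_{a,R_0}$ one has $|\partial_3u|\le|\Delta u|+|P|\lesssim|p-p_{a,R_0}|^{-2}$, which integrates along vertical rays to give \eqref{bdry_limit}. The barriers $b_{a,R_0}$ are $L$-harmonic and gradient convex, hence nonlinear supersolutions at every scale with no linearization needed. Since they are $\ge 1$ off $H_{a,R_0}$, only \eqref{bdry_limit} is needed. (Minor point: the graph classification you cite concerns graphs over horizontal domains, not horizontal graphs. The Liouville fact in Step 1 follows instead from \eqref{eq:grad_up} on half-planes $\{x_3>p_3-\rho\}$ with $\rho\to\infty$.)

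Second, the bootstrap does not close in the parabolic region $\{x_3+|x|\lesssim 1\}$ around the negative $x_3$-axis, where $G_L\sim|x|^{-1/2}$. Your $\psi_+=e^{-x_3/2}K_0((1-\varepsilon)|x|/2)$ behaves like $|x|^{-1/2}e^{\varepsilon|x|/2}$ along that axis. So it gives nothing there, and it does not yield the claimed $e^{-(1-\varepsilon)(x_3+|x|)/2}$. Meanwhile $L\psi_-=\tfrac{(1+\varepsilon)^2-1}{4}\psi_->0$, so $\psi_-$ is a subsolution.

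Even $|E|\lesssim G_L^{2-\varepsilon}$ would not suffice, because your convolution estimate is false. Take $x=-t\ee_3$ and restrict to $y$ with $\tau:=y_3+t\in[1,t/2]$ and $|y_2|\le\sqrt{\tau}$. Then $G_L(x-y)\gtrsim\tau^{-1/2}$ and $G_L(y)\gtrsim t^{-1/2}$, so $\int G_L(x-y)G_L(y)^{2-\varepsilon}\,dy\gtrsim t^{\varepsilon/2}$, while $G_L(x)\sim t^{-1/2}$. The triangle inequality handles the exponentials, but the leftover factor is not integrable along that parabola.

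To reach $C\,G_L$ there, the nonlinear error must decay at least like $|x|^{-3/2-\delta}$ in that region. That needs genuine polynomial rates for $\nabla u$ and $\Hess u$, which a compactness argument cannot provide. This is the role of $|\Hess u|\le B/|x|$ from \eqref{est:general} in the paper. Combined with anisotropic Landau--Kolmogorov interpolation against $|u-u_\infty|\lesssim e^{-(x_3+|x|)/2}$, it gives \eqref{eq:P_term_exp_bound}. Lemma \ref{lem:Lu_f_decay} then closes with an explicit barrier whose $L$-deficit is of order $|x|^{-2}e^{-(x_3+|x|)/2}$.
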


This is Theorem~\ref{thm:optimal_decay} below. It strengthens the asymptotic
information available in our previous work
\cite{GMM-UniquenessTangentPlanes} (see also Remark \ref{rem:ray_counterex}), where directional limits along rays and derivative
decay properties were obtained for general collapsed translators, but where uniform sharp
exterior estimates were not available, as such do not hold true in the generality treated there. In the finite-total-curvature setting,
Khan \cite{Ilyas} obtained a weaker quantitative decay estimate (see Remark \ref{rem:compare_Ilyas}); the bound in
Theorem \refThmB has the optimal exponential factor dictated by the Green's function
of $L$. The philosophy that nonlinear asymptotics should be governed by the
linearized Green's function has close analogues in other geometric elliptic
problems; compare, for example, the refined asymptotic analysis of
Korevaar--Mazzeo--Pacard--Schoen \cite{KMPS} for the singular Yamabe
equation.

In particular, Theorem (II) shows the remarkable phenomenon that the manifestly anisotropic equation \eqref{TSE_geom}, or equivalently \eqref{TSE}, only admits $L^\infty$ exterior domain solutions whose limits are in fact isotropic at infinity. See Figure \ref{fig:planarwings}.

The proof of Theorem \refThmB starts from well-known methods, but to obtain the optimal result presented here requires carefully handling of the anisotropy and the finer aspects of its interaction with the nonlinearities of the equation, leading us to develop a rather delicate technique for anisotropic decay bootstrapping that is well-suited to this situation. We first construct a one-parameter family of explicit $L$-harmonic barriers, obtained from linearized tilted grim-reaper-type supersolutions for the nonlinear equation, and minimize over the family to obtain a global anisotropic exponential bound. We then combine this estimate with the a priori Hessian estimates from
\cite{GMM-UniquenessTangentPlanes} and an anisotropic $L^\infty$ interpolation argument
to gain a priori control of first the gradient and then the nonlinear error. A final comparison argument for the Yukawa conjugate of $L$ upgrades the preliminary estimate to the sharp Green's function bound. An important feature is that the argument works for bounded nonlinear solutions of arbitrary supremum norm.

Once Theorem \refThmB has been proven, Theorem \refThmA follows readily. Namely, the structure theory from
\cite{GMM-UniquenessTangentPlanes}, together with a topological covering
argument, shows that sufficiently far out every set of left and right pair of planar wings gives rise to a single-valued
horizontal graph over the complement of a vertical disk. Theorem~\refThmB then supplies enough
decay to show that the vertical flux through the boundary circles of those
graphical ends tends to zero. On the other hand, for every compact translator
$M$ with smooth boundary one has the identity
\begin{equation}\label{NTas}
        \int_M |\vec H|^2\,dA
        =
        \int_{\partial M}\langle\eta,\ee_3\rangle\,d\sigma,
\end{equation}
where $\eta$ is the outward unit conormal. This elementary translator flux identity is closely related to the identities used by Neves--Tian
\cite{Neves-Tian} and is also the key mechanism in \cite{Lynch-Tinaglia} (see also \eqref{PohoNTas} below). Applying it to larger and larger compact
truncations of $\Sigma$ and using the decay of the boundary flux gives

$$
        \int_\Sigma |\vec H|^2\,dA=0.
$$

\noindent{}Thus $\vec H\equiv0$, and the translator equation \eqref{TSE_geom} forces $\Sigma$ to equal a vertical plane. This argument can also be interpreted as a Pohozaev-style non-existence result, see Section \ref{sec:Poho}.

Theorem \refThmA has an important further consequence: we are able to classify the collapsed translators of finite genus and entropy $\lambda < 3.$ In fact, we prove the following:
\begin{thmC}
   Let $\Sigma$ be a complete, connected, embedded, collapsed translator in $\R^3$ with finite genus and entropy $\lambda(\Sigma) < 3.$ Then, $\Sigma$ is either a vertical plane, a possibly tilted grim-reaper cylinder, or a $\Delta$-wing.
\end{thmC}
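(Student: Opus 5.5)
The plan is to reduce Theorem (III) to Theorem (I) by ruling out the presence of planar wings through an entropy count, and then to invoke the known classification of collapsed translators with only grim-reaper-type wings.

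\textbf{Structure at infinity.} By the structure theory of \cite{GMM-AJM,GMM-UniquenessTangentPlanes}, outside a large vertical cylinder $\Sigma$ decomposes into finitely many wings. Each wing is either of planar type or of grim-reaper type, where a grim-reaper-type wing is asymptotic to half of a (possibly tilted) grim reaper cylinder. Entropy is additive over wings in the limit. Blowing down, or translating along the direction $-\ee_3$, we expect the following contributions:
\begin{itemize}
\item each pair of grim-reaper half-wings contributes $2$, as for the grim reaper cylinder;
\item each planar wing contributes at least $1$ on each side it reaches, the limiting vertical plane carrying multiplicity $1$ and being reached from both the $x_3\to+\infty$ and $x_3\to-\infty$ directions.
\end{itemize}
Concretely, I would use \cite[Section 4, Section 7]{GMM-AJM} to show that
\[
\lambda(\Sigma) \ge N_{\mathrm{gr}} + N_{\mathrm{pl}},
\]
with appropriate weights, where $N_{\mathrm{gr}}$ and $N_{\mathrm{pl}}$ count the limiting lines of the downward translation limit.

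\textbf{Case analysis.} If every wing is planar, Theorem (I) gives a vertical plane. Otherwise $\Sigma$ has at least one grim-reaper wing. Since $\Sigma$ sits in a slab, grim-reaper wings come in pairs: there is one on each side, by connectedness and the slab geometry. This pair already contributes entropy $2$.

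Now suppose a planar wing is also present. Translating $\Sigma$ downward, the limit contains a grim reaper cylinder's two parallel vertical planes, each with multiplicity. A planar wing, as $x_3\to-\infty$, also approaches a vertical plane inside the slab. If this plane is distinct from the two grim-reaper limit planes, or coincides with one of them and adds multiplicity, then the downward limit has total multiplicity at least $3$. Hence $\lambda\ge 3$, a contradiction.

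The delicate point is to show that a planar wing cannot merge with a grim-reaper plane without adding multiplicity. I would handle this by embeddedness and the maximum principle, combined with the sheet-counting (entropy lower bound via number of sheets over a horizontal line) from \cite{GMM-AJM}.

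\textbf{Conclusion.} Hence $\lambda<3$ with a non-planar wing forces exactly two wings, both of grim-reaper type. For this case the prior low-entropy classification applies: \cite{Chini,GMM-AJM} show that collapsed finite-genus translators of entropy less than $3$ with exactly two grim-reaper wings are either tilted grim reaper cylinders or $\Delta$-wings. If needed, one first shows that $\Sigma$ is a graph via a Rad\'o/Alexandrov-type argument and then uses \cite{HIMW}.

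\textbf{Main obstacle.} The main obstacle is the entropy lower bound in the mixed case. One must verify rigorously that planar wings and grim-reaper wings contribute additively to the multiplicity of some blow-down or translation limit. This needs care because a planar wing's limiting plane could coincide with the boundary plane of the slab where grim-reaper wings also accumulate. I would try first to use the uniqueness of limiting planes from \cite[Theorem A]{GMM-UniquenessTangentPlanes}, together with the Huisken monotonicity lower bound for entropy, at a point where two distinct sheets converge.
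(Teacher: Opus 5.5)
Your overall plan is the paper's: all wings planar $\Rightarrow$ Theorem (I); mixed wing types excluded by an entropy count; exactly two grim-reaper wings $\Rightarrow$ a prior classification. The last step, however, has a genuine gap. The results you invoke from \cite{Chini,GMM-AJM} classify collapsed translators of entropy below $3$ only under a simple-connectedness assumption. The paper says its corollary removes exactly that assumption. So those results say nothing when $\Sigma$ has positive genus. Your fallback, getting graphicality from a ``Rad\'o/Alexandrov-type argument'' and then applying \cite{HIMW}, is not a routine step. Ruling out positive genus and showing that a finite-genus translator with two grim-reaper wings is a graph is precisely what has to be proved there. The paper closes this with \cite[Theorem F]{GMM-UniquenessTangentPlanes}, which covers finite genus with $\lambda(\Sigma)=2$ under the hypothesis $\Sigma+t\ee_3\to\varnothing$ as $t\to+\infty$. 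It checks that hypothesis via \cite[Theorem A]{GMM-UniquenessTangentPlanes} once all wings are of grim-reaper type. You need both ingredients, and your proposal contains neither.

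The entropy count you single out as the main obstacle is already available in exact form. \cite[Theorem 8.2]{GMM-AJM} gives
\[
\lambda(\Sigma)=\omega_P^-+2\omega_G^-=\omega_P^++2\omega_G^+ ,
\]
and the downward graphical decomposition \cite[Proposition 2.6]{GMM-UniquenessTangentPlanes} gives $\omega_P^+=\omega_P^-$. Together these yield $\omega_G^+=\omega_G^-$, which is the pairing of grim-reaper wings you attributed to ``connectedness and slab geometry.'' They also give $\lambda\ge 1+2=3$ in the mixed case immediately. In the all-grim-reaper case, $\lambda<3$ forces $\omega_G^\pm=1$ and $\lambda=2$, which is what Theorem F needs. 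Your heuristic derivation of this count also has a directional slip. The translates $\Sigma-t\ee_3$ see the region $x_3\to+\infty$, so the relevant contribution of a planar wing is its upward branch, not its behavior as $x_3\to-\infty$. On a far-out slice $x_2=c$, a planar wing contributes one upward branch and a grim-reaper wing contributes two; the cited formula makes this count rigorous.
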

This theorem, which we prove below as Corollary \ref{cor:entropy-at-most-two}, uses a previous characterization result of grim reapers and $\Delta$-wings obtained by the authors in \cite[Theorem F]{GMM-UniquenessTangentPlanes}.

Besides proving Theorem \refThmAnospace, the sharp exterior estimate has consequences that
are useful independently of the global rigidity theorem. In particular, a
planar graphical end carries a conserved weighted horizontal flux, and this
flux detects an obstruction to improving the decay estimates from $O(G_L)$ to $o(G_L)$, and we show in Proposition \ref{prop:flux-obstruction-rewritten} that the only collapsed graphical ends with compact boundary in a vertical plane and $o(G_L)$ decay are subsets of vertical planes in $\mathbb{R}^3$. Already for the linearized equation, different angular Fourier modes occur at
the same Green's function order, so one should not expect a general bounded
exterior solution to have a leading term which is merely a constant multiple
of $G_L$. We develop these flux invariants in
Section~\ref{sec:horizontal-flux}. They also enter naturally in the study of
free-boundary
rigidity discussed later in the paper.

The latter discussion is related to a broader circle of Liouville and boundary
rigidity questions. Compact translators with planar boundary were studied by
Pyo \cite{Pyo}, who obtained symmetry and balancing results under geometric
hypotheses on boundary curves lying in planes orthogonal to the translating
direction. For other geometric elliptic equations, half-space and
free-boundary Liouville theorems have recently been proved for capillary
minimal graphs by Wang--Wei--Zhang \cite{Capillary} and for anisotropic
minimal graphs by Wang--Wei--Xia--Zhang \cite{Anisotropic}. In the translator
setting, and more generally for $[\varphi,\ee_3]$-minimal surfaces
\cite{Trivino}, the positivity of the vertical flux identity gives a
particularly direct rigidity mechanism.

The paper is organized as follows. In Section~\ref{sec:exterior-problem} we
prove the sharp exterior decay theorem. We then establish the graphical
decomposition of planar wings and prove Theorem~\ref{thm:planar_wings}. Section \ref{sec:entropy-two} is devoted to the classification of collapsed translators with entropy strictly less than three. In
Section~\ref{sec:horizontal-flux} we study the weighted horizontal fluxes of
planar ends and their relation to the optimal decay rate. The remaining
sections investigate the associated
boundary-flux formulas (Section \ref{sec:vert_flux}), together with their rigidity consequences, and the link to Pohozaev-type identities (Section \ref{sec:Poho}).\\

\noindent{}\textbf{Acknowledgments.} E.S. Gama was partially supported by CNPq/Brazil Grant: 313412/2025-1. E.S. Gama and F. Mart\'in were partially supported by the NSF grant DMS-1928930, by the grant PID2024-156031NB-I00 funded by  MICIU / AEI / 10.13039/ 501100011033. N.M. M\o{}ller was partially supported by DFF Sapere Aude 7027-00110B, by Carlsberg Foundation Semper Ardens CF21-0680 and by CPH-GEOTOP-DNRF151.

Apart from drafting this AI use statement, generative AI was used only to verify small calculus facts, to correct typos and to search for references. All mathematical content is the authors' own original work.

\section{Sharp decay in the nonlinear exterior problem} \label{sec:exterior-problem}

For nonlinear problems it is very natural to expect, but generally difficult to show, that well-behaved classes of solutions are bounded by the Green's function of the corresponding linearized equation. See e.g. Korevaar--Mazzeo--Pacard--Schoen \cite{KMPS}, for the singular Yamabe problem, a semilinear equation whose rotational symmetry was brought into play via mode analysis. We remind the reader that equations \eqref{TSE_geom} (see also \eqref{TSE}) are not small perturbations at infinity of the classical Euclidean minimal surface equation, as the drift term continues to influence the solutions far out, leading to the global analysis differing markedly from the classical case.

Here, despite the anisotropy (see Figure \ref{fig:planarwings}), we are able to prove such a property, as announced in \cite[Remark 4.22]{GMM-UniquenessTangentPlanes}, for exterior domains $\mathbb{R}^2\setminus\Omega$, with $\Omega\ni 0$ bounded:
\begin{thm}[Optimal decay for $L^\infty$ solutions of arbitrary norm]\label{thm:optimal_decay}
Suppose that the function $u: \mathbb{R}^2\setminus \Omega \to \mathbb{R}$ is a smooth solution with $\sup_{\mathbb{R}^2\setminus \Omega} |u| < \infty$ to
\begin{equation}\label{TSE}
    \diverg\left(\frac{\nabla u}{\sqrt{1+|\nabla u|^2}}\right)=-\frac{\partial _{3} u}{\sqrt{1+|\nabla u|^2}},
\end{equation}
which is equivalent to, writing $L:= \Delta + \ee_3\cdot\nabla$, the equation
\begin{equation}\label{TSE_split}
Lu = P(\nabla u, \operatorname{Hess} \,u), \quad \mathrm{for}\quad P(\nabla u, \operatorname{Hess}\, u):=\frac{\operatorname{Hess}  u(\nabla u, \nabla u)}{1+|\nabla u|^2}.
\end{equation}

Then there exists $u_\infty\in\mathbb{R}$, and $C_0 = C_0(\diam(\Omega),\|u\|_{L^\infty})$, such that 
\begin{equation}\label{eq:final_G_bound}
|u(x) - u_\infty|\leq C_0\,G_L(x), \quad \quad\dist(x,\Omega) > 1,
\end{equation}
\begin{equation}\label{eq:G_bound_grad}
|\nabla u(x)|\leq C_0G_L(x),  \quad \quad\dist(x,\Omega) > 1,
\end{equation}
where $G_L(x)=\frac{1}{2\pi}e^{-\frac{x_3}{2}}K_0\left(\tfrac{|x|}{2}\right)$ is the Green's function for $L$, with $K_0$ being the modified Bessel function of the second kind.

In particular:
\begin{equation}\label{asympt_u_spellout}
u = u_\infty + O\left(|x|^{-1/2}\exp\left(-\tfrac{x_3 + |x|}{2}\right)\right)\quad\text{as}\quad|x|\to\infty.
\end{equation}
\end{thm}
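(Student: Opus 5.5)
We follow the strategy outlined in the introduction: first a crude anisotropic exponential bound from barriers, then gradient and nonlinear-error control, then an upgrade to the Green's function scale.

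\emph{Step 1 (existence of $u_\infty$ and a preliminary anisotropic bound).} Set $M=\|u\|_{L^\infty}$. For each unit direction $\omega\in S^1$, the function $e^{-\langle \omega+\ee_3,x\rangle}$ satisfies $L\,e^{-\langle\omega+\ee_3,x\rangle}=(|\omega+\ee_3|^2-(\omega_3+1))e^{-\langle\omega+\ee_3,x\rangle}$. More generally, $e^{-\langle p,x\rangle}$ is $L$-harmonic exactly when $|p|^2=p_3$, i.e.\ when $p$ lies on the circle of radius $\tfrac12$ centred at $\tfrac12\ee_3$. These are linearizations of tilted grim-reaper translators. For such $p$ we use functions of the form $u_\infty\pm(\varepsilon+C e^{-\langle p,x\rangle})$ (with suitably chosen parameters), adjusted so that they are genuine super/subsolutions of the quasilinear operator in \eqref{TSE}. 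For this, the quasilinear structure $a^{ij}(\nabla u)\partial_{ij}u+\partial_3u$ lets us compare $u$ with the barrier via the maximum principle for the difference, which satisfies a linear equation with bounded measurable coefficients; no smallness is needed because the barriers only need to dominate $2M$ on $\partial\Omega$ and at infinity. The limit $u_\infty$ is obtained from the directional limits of \cite{GMM-UniquenessTangentPlanes}, which are shown to coincide by the barriers. Minimizing over $p$ gives $\inf_p e^{-\langle p,x\rangle}=e^{-(x_3+|x|)/2}$, so
\[
|u(x)-u_\infty|\le C\,e^{-(x_3+|x|)/2}
\]
outside a neighbourhood of $\Omega$. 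This bound is sharp except for the polynomial factor.

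\emph{Step 2 (gradient and error terms).} Use the a priori Hessian estimates of \cite{GMM-UniquenessTangentPlanes}, $|\operatorname{Hess}u|\le C$ uniformly away from $\Omega$. Combine them with interpolation on unit balls: $\|\nabla v\|_{L^\infty(B_{1/2})}\le C\|v\|_{L^\infty(B_1)}^{1/2}\|\operatorname{Hess} v\|_{L^\infty(B_1)}^{1/2}$ applied to $v=u-u_\infty$. Since the weight $e^{-(x_3+|x|)/2}$ varies only by bounded factors on unit balls, this gives $|\nabla u|\le Ce^{-(x_3+|x|)/4}$ initially. Then bootstrap: where $\nabla u$ is small the equation is uniformly elliptic with H\"older coefficients, and Schauder estimates on unit balls give $|\nabla u|+|\operatorname{Hess}u|\le C e^{-(x_3+|x|)/2}$. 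Hence $|P|\le C e^{-3(x_3+|x|)/2}$, which decays much faster than $G_L$.

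\emph{Step 3 (upgrade to $G_L$).} Conjugate: write $v=e^{x_3/2}(u-u_\infty)$. Then $Lu=P$ becomes the Yukawa equation
\[
\Delta v-\tfrac14 v=e^{x_3/2}P=:f,\qquad |f|\le Ce^{-x_3-3|x|/2},
\]
where we use $x_3/2-3(x_3+|x|)/2=-x_3-3|x|/2$ and $|f|\le Ce^{-|x|/2}e^{-(x_3+|x|)}\le Ce^{-|x|/2}$ (as $x_3+|x|\ge0$). The barrier is $\psi=A K_0(|x|/2)-B\,h$ with $h$ a fast-decaying correction, for instance $h=e^{-(1/2+\delta)|x|}$ modified near the origin. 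The point is that $(\Delta-\tfrac14)K_0(|x|/2)=0$ and $f$ decays strictly faster than $K_0(|x|/2)$, so $(\Delta-\tfrac14)e^{-\mu|x|}\gtrsim e^{-\mu|x|}$ for $\mu>1/2$, which absorbs $f$. Apply the maximum principle for $\Delta-\tfrac14$ on $\{\dist(x,\Omega)>1\}$, using that $v\to0$ at infinity by Step 1. This yields $|v|\le C K_0(|x|/2)$, which is \eqref{eq:final_G_bound}. Applying the same argument to $\nabla u$ after differentiating the equation, or using rescaled Schauder estimates on unit balls for $v$, gives \eqref{eq:G_bound_grad}. Then \eqref{asympt_u_spellout} follows from $K_0(r)\sim\sqrt{\pi/2r}\,e^{-r}$.

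\emph{Main obstacle.} I expect Step 1 to be the hard part. One must make the grim-reaper-type barriers genuine super/subsolutions of the \emph{nonlinear} equation without any smallness of $M$. One must also handle the half-space geometry, since the barrier $e^{-\langle p,x\rangle}$ blows up in the direction $-p$. This means each barrier can only be used on a half-plane-like region where it dominates $2M$ on the boundary of that region. The minimization over $p$ must then be done region by region, and these pieces must be patched together with the directional limits from \cite{GMM-UniquenessTangentPlanes}.
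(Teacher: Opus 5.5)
Your architecture is the paper's: the $L$-harmonic exponentials $e^{-\langle p,x\rangle}$ with $|p|^2=p_3$ are exactly the barriers $b_{a,R_0}$ up to normalization, followed by interpolation and a Yukawa-conjugated comparison. However, Steps 1 and 3 do not go through as written.

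\textbf{Step 1: the maximum principle on an unbounded domain.} The difficulty you flag is actually immediate. Since $Lb=0$ and $\Hess b(\nabla b,\nabla b)=|p|^4b^3>0$, \eqref{eq:supsol} shows that $\sigma b+\delta$ is a supersolution for every $\sigma>0$, with no adjustment. Since $b\ge 1$ off the half-plane $\{b\le 1\}$, no region-by-region patching is needed either.

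The real gap is applying the maximum principle on the unbounded domain. On the exhaustion $B_R(p_{a,R_0})\setminus B_{R_0}$, the barrier is close to $u_\infty+\delta$ on the part of the outer circle lying in $H_{a,R_0}$. So you need $u\to u_\infty$ \emph{uniformly} on those arcs, which is \eqref{bdry_limit}.
\begin{itemize}
\item Limits along individual rays do not give this (Remark~\ref{rem:ray_counterex}).
\item Getting the directional limits to ``coincide by the barriers'' is circular, because the barrier comparison is exactly what needs this input.
\item Without information at infinity the comparison really fails: already for $L$, the function $1-e^{-x_3}$ is bounded and $L$-harmonic on $\{x_3>0\}$, and vanishes on its boundary.
\end{itemize}
The paper proves \eqref{bdry_limit} using the upward-capsule estimates of Proposition~\ref{prop:poelsekapsel} on a wedge $W\supset H_{a,R_0}$ with vertex $p_{a,R_0}$. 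These give $|\partial_3u|\le|\Delta u|+|P|\lesssim|x-p_{a,R_0}|^{-2}$. Integrating along upward vertical rays, which stay in $H_{a,R_0}$, then gives $|u-u_\infty|\lesssim R^{-1}$ uniformly on $H_{a,R_0}\cap\partial B_R(p_{a,R_0})$.

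\textbf{Step 3: the forcing near the downward axis.} Your claim that $f$ decays strictly faster than $K_0(|x|/2)$ is false. The weight $\omega=e^{-(x_3+|x|)/2}$ equals $1$ on the ray $x_3=-|x|$, so your Steps 1--2 give no decay of $u-u_\infty$, $\nabla u$ or $\Hess u$ there. Your own bound $|f|\le Ce^{-|x|/2}$ exceeds $K_0(|x|/2)\sim\sqrt{\pi/|x|}\,e^{-|x|/2}$ by a factor $|x|^{1/2}$, and no correction $e^{-\mu|x|}$ with $\mu>\tfrac12$ can absorb it.

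A better barrier will not fix this. Take $u=g(x_3+|x|)$, where $g$ is a smooth cutoff with $g\equiv1$ near $0$ and $g=0$ on $[1,\infty)$. It satisfies:
\begin{itemize}
\item $|u|,|\nabla u|,|\Hess u|\le C\omega$;
\item $|Lu|\le C|x|^{-1}\omega^3$, so $|e^{x_3/2}Lu|\le Ce^{-x_3-3|x|/2}$;
\item $e^{x_3/2}u\to0$ at infinity.
\end{itemize}
Yet $u\equiv1$ on the negative $x_3$-axis, so $u$ is not $O(G_L)$.

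What is missing is polynomial decay in the downward direction. The paper uses the decaying Hessian bound $|\Hess u|\le B/|x|$ from \eqref{est:general}. It combines this with Landau--Kolmogorov interpolation on balls of radius $\sqrt{|x|\omega(x)}$, on which $\omega$ stays comparable, to get $|\nabla u|\lesssim|x|^{-1/2}\omega^{1/2}$. Hence $|P|\lesssim|x|^{-2}\omega$, as in \eqref{eq:P_term_exp_bound}.

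That $|x|^{-2}$ factor is exactly what the radial barrier $\alpha(\sqrt{R_0/|x|}-|x|^{-1})e^{-(x_3+|x|)/2}$ of Lemma~\ref{lem:Lu_f_decay} absorbs, since its image under $L$ is $\le -c\,|x|^{-2}e^{-(x_3+|x|)/2}$. Your unit-ball Schauder bootstrap, which uses only $|\Hess u|\le C$, cannot produce this factor. Your final step, deducing \eqref{eq:G_bound_grad} from \eqref{eq:final_G_bound} by unit-ball estimates, is fine.
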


\begin{rem}\label{rem:compare_Ilyas}
 The new a priori decay estimates in \eqref{eq:final_G_bound}--\eqref{asympt_u_spellout} give a more than squared improvement over the rate of order $O\big(|x|^{-\tfrac{1}{4} + \varepsilon}\big)\exp\left(-\tfrac{x_3 + |x|}{4}\right)$ for $\varepsilon > 0$ found by Khan in \cite{Ilyas}, where (starting from the assumption $\int_\Sigma |A|^2 < \infty$) the additional condition $|\nabla u| = o(1)$ as $|x|\to\infty$ was assumed.
\end{rem}

\begin{rem}
 The estimate \eqref{eq:G_bound_grad}, valid for arbitrarily large data, is in all directions away from $-\ee_3$ an exponential improvement over the general gradient decay obtained in \cite{GMM-UniquenessTangentPlanes} (here as \eqref{eq:grad_up} below). Hence, it also improves on the weaker isotropic small data gradient estimate in Lynch--Tinaglia's paper \cite{Lynch-Tinaglia}, which in turn follows directly by usual isotropic $L^\infty$ interpolation from the ``softer'' of the Hessian estimates in \cite{GMM-UniquenessTangentPlanes} (see \eqref{est:general} in Proposition \ref{prop:poelsekapsel} below), of order $O(|x|^{-1/2})$ times the supremum norm over circles. This yielded only gradient decay of order $O(|x|^{-\tfrac{3}{4}+\varepsilon})$, when combined with Khan's decay estimate for the special case of finite total curvature. See also Remark \ref{rem:sharpness} below.
\end{rem}

To prove Theorem \ref{thm:optimal_decay}, we will therefore first do a careful analysis of the anisotropic problem, in order to get strong enough decay estimates.

\begin{lem}\label{lem:first_decay}
Suppose that $u: \mathbb{R}^2\setminus \Omega \to \mathbb{R}$ is a smooth exterior $L^\infty$ solution to the nonlinear equation \eqref{TSE}, where $0\in\Omega$ is a smooth bounded domain. Then:
\begin{equation}\label{eq:smeared_sup_bound}
\exists u_\infty:\: 
|u(x) - u_\infty|\leq C(\diam(\Omega))\|u\|_{L^\infty}\exp\left[-\tfrac{x_3 + \sqrt{x_2^2 + x_3^2}}{2}\right], \quad 
\dist(x,\Omega) > 1.
\end{equation}
\end{lem}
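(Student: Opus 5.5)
We use comparison with explicit supersolutions of the nonlinear equation, built from tilted grim-reaper-type profiles, and then minimize over the tilt parameter. This is the approach announced in the introduction.

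\textbf{Step 1: a one-parameter family of supersolutions.} For a unit vector $\omega=(\cos\theta,\sin\theta)$ with $\sin\theta>-1$, consider the $L$-harmonic exponentials
\[
\phi_\omega(x)=\exp\bigl[-\tfrac12\bigl(x_3+\omega\cdot x\bigr)\bigr],\qquad x=(x_2,x_3).
\]
Since $\lambda=-\tfrac12(\ee_3+\omega)$ satisfies $|\lambda|^2+\lambda\cdot\ee_3=\tfrac14(1+2\sin\theta+1)-\tfrac12(1+\sin\theta)=0$, we have $L\phi_\omega=0$. Each $\phi_\omega$ is the linearization of a tilted grim-reaper-type translating graph.

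We then verify that $A\phi_\omega+B$ is a super- or subsolution of the nonlinear operator $Q(u)=\operatorname{div}(\nabla u/W)+\partial_3u/W$ in the region where it is relevant. Since $\phi_\omega$ depends on one linear variable only, the translating graph $u=f(\omega^\perp$-free variable$)$ reduces to an ODE. One can either:
\begin{itemize}
\item use the exact tilted grim reaper (one-dimensional) solution as barrier, or
\item check directly that $Q(A\phi_\omega)$ has a sign.
\end{itemize}
For a function of $s=\nu\cdot x$ with $\nu=(\ee_3+\omega)/|\ee_3+\omega|$, one computes $Q(g(s))=g''/W^3+\nu_3 g'/W$. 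This vanishes to first order for the exponential, and the cubic correction $g''(1/W^3-1)$ has a definite sign relative to $g''$. So we can instead take $g$ to be the exact one-dimensional translator profile (a rescaled tilted grim reaper graph in $s$), which is a genuine solution on a half-plane. It is bounded by $C\phi_\omega$ when $g$ is small and otherwise by the $L^\infty$ bound.

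\textbf{Step 2: comparison on half-planes.} By translation we may assume $\Omega\subset B_R$ with $R=\diam\Omega$. For each $\omega$, apply the maximum principle to $u-\sup u$ and the barrier on the half-plane $\{\omega\cdot x > R\}$. More precisely, compare $u$ with $c\pm M\,e^{R}\phi_\omega$, with $M=2\|u\|_{L^\infty}$, on $\{\omega\cdot x>R\}$. The boundary line already satisfies the inequality by the sup bound.

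At infinity inside the half-plane, $\phi_\omega$ need not blow up (e.g.\ along $\omega^\perp$ directions). This is the main obstacle. To handle it, we first establish existence of the limit $u_\infty$ and uniform convergence $u\to u_\infty$. This is available from the directional limits and Hessian/gradient decay of \cite{GMM-UniquenessTangentPlanes} (uniqueness of limiting planes; with isotropic limits, all rays give the same value, since the domain is connected at infinity and gradient decay is $o(1)$). Then we compare $u-u_\infty$, which tends to $0$ at infinity, with $\pm M e^R\phi_\omega$. The maximum principle for the quasilinear operator $Q$ applies, since the difference of two solutions satisfies a linear elliptic equation without zeroth-order term. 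We use the exact one-dimensional solution as barrier to avoid sign issues.

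\textbf{Step 3: minimizing over $\omega$.} For fixed $x$ with $|x|>R+1$, choose $\omega=x/|x|$. Then $\omega\cdot x=|x|>R$, so $x$ lies in the half-plane and
\[
|u(x)-u_\infty|\le C e^{R}\|u\|_{L^\infty}\exp\bigl[-\tfrac12(x_3+|x|)\bigr],
\]
which is \eqref{eq:smeared_sup_bound}. The degenerate direction $\omega=-\ee_3$ gives $\phi\equiv1$, which is consistent with the trivial bound. For $x$ within bounded distance of $\Omega$ the estimate is trivial by enlarging $C$.

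We expect the real difficulty to be Steps 1–2: producing, for arbitrary $L^\infty$ norm, an exact nonlinear barrier dominated by $C\phi_\omega$, and justifying the maximum principle on unbounded half-planes. For this we rely on prior knowledge that $u\to u_\infty$ uniformly, which we take from \cite{GMM-UniquenessTangentPlanes}.
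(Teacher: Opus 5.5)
Your barrier family and your choice $\omega=x/|x|$ match the paper's. With $a=\cos\theta/(1+\sin\theta)$, $\phi_\omega$ is $b_{a,R_0}$ up to a constant factor, and $\omega=x/|x|$ corresponds to $a_0=x_2/(x_3+|x|)$. However, the step you defer to the literature is exactly the one that needs proof.

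\emph{Uniform convergence at infinity.} Uniform convergence $u\to u_\infty$ is not supplied by \cite{GMM-UniquenessTangentPlanes}, which gives limits along rays and derivative decay. Your justification does not produce it either:
\begin{itemize}
\item The exterior domain lacks the upward capsule property below $\Omega$, so globally only $|\nabla u|\le K|x|^{-1/2}$ from \eqref{est:general} is available. This gives no control of the oscillation over circles of length $2\pi r$.
\item Even equal limits along all rays together with $|\nabla u|=O(|x|^{-1+\varepsilon})$ do not give uniform convergence (Remark \ref{rem:ray_counterex}).
\end{itemize}
Without a limit at infinity, the maximum principle on the unbounded domain is not available for this equation.

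The missing idea is that uniform convergence is only needed where the barrier is small. That region is a slanted \emph{upward} half-plane $H$: its inner normal $\ee_3+\omega$ has positive vertical component. Take $H$ tangent to $B_{R_0}$ at $p$. It can be enlarged to an upward wedge $W\subset\mathbb{R}^2\setminus\Omega$ with vertex $p$ and the upward capsule property. Hence \eqref{eq:grad_up}--\eqref{Hess_up_improved} hold there, with $\dist(q,\partial W)\ge d_0|q-p|$ for $q\in H$. Then $|\partial_3u|\le|\Delta u|+|P(\nabla u,\Hess u)|\le C|q-p|^{-2}$ is integrable along vertical rays. The representation $u(q)=u_\infty-\int_0^\infty\partial_3u(q+s\ee_3)\,ds$ gives uniform convergence on $H\cap\partial B_R(p)$, and $u_\infty$ is independent of the slant by the capsule gradient bound. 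This is \eqref{bdry_limit}, and it is what lets the comparison on $B_R(p)\setminus B_{R_0}$ close as $R\to\infty$. Off $H$, no convergence information is needed at all. That matters, because near the direction $-\ee_3$ none is available a priori.

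\emph{The comparison domain.} The half-plane $\{\omega\cdot x>R\}$ is the wrong domain. On its boundary line, $Me^{R}\phi_\omega=Me^{R/2}e^{-x_3/2}\to0$ as $x_3\to+\infty$ whenever $\omega\neq\pm\ee_3$. So the boundary inequality does not follow from the sup bound; it is essentially the estimate you are proving. You need a half-plane bounded by a level line of $\phi_\omega$, i.e.\ with inner normal $\ee_3+\omega$ rather than $\omega$. Alternatively, as in the paper, compare on all of $\mathbb{R}^2\setminus B_{R_0}$, using $b_{a,R_0}\ge1$ off $H_{a,R_0}$.

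\emph{The barrier.} Drop the exact one-dimensional profile $c\pm\nu_3^{-1}\arcsin(Ce^{-\nu_3 s})$. Its oscillation is fixed at $\pi/(2\nu_3)$, so it cannot dominate $2\|u\|_{L^\infty}$ on the boundary for large data. The exponential itself is the right barrier at every scale, since
\[
Q(A\phi_\omega)=g''(W^{-3}-W^{-1})=-g''(g')^2W^{-3}<0 \quad\text{for every } A>0.
\]
(Your factor $W^{-3}-1$ should read $W^{-3}-W^{-1}$; the sign is unaffected.) This scale invariance comes from $L$-harmonicity plus gradient convexity, \eqref{gradient_convexity}--\eqref{eq:supsol}. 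It is what makes the constant linear in $\|u\|_{L^\infty}$ for arbitrary data.
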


For the proof of Lemma \ref{lem:first_decay}, it appears difficult to find sharp nonlinear barriers directly for the problem, i.e. from modifications of the Green's function itself, as scaling invariance with respect to the $L^\infty$ data is not generally guaranteed, unless the barrier is for example both $L$-harmonic and gradient convex, which decouples the terms with different scaling behavior (see \eqref{eq:supsol} below). The latter property, gradient convexity, meaning $\mathrm{Hess}\,w(\nabla w, \nabla w)\geq 0$, however fails for the Green's function itself. There is also the subtle point that even if one finds supersolutions to the nonlinear problem, such barriers might not necessarily work at all scales, i.e. for arbitrary values of the data $R > 0$ and $\|u\|_{\infty}$. See also Remark \ref{rem:not_barrier} below.

We will therefore instead make use of a whole one-parameter family of simpler barriers with good properties, coming from Killing fields from families of tilted grim reaper cylinders, which are rotated, scaled and translated appropriately around in space. Each of these individually satisfies both $L$-harmonicity and gradient convexity, allowing us to get a family of estimates on $u$ valid for arbitrarily large norm solutions, at which time we can then minimize the resulting inequalities over this family.

An analytic starting point for the argument is the idea (similarly to in \cite{GMM-UniquenessTangentPlanes}, see also \cite{KKM, NielsThesis}) that once sufficiently strong decay estimates have been obtained for the second derivatives, then the drift term $\ee_3\cdot\nabla$ can be split off from the linear operator and explicitly integrated as an ODE, in order to prove that limits exist, together with estimates. See \eqref{bdry_limit} below.

Before beginning the detailed proofs, we therefore recall the authors' sharpened versions of the Evans-Spruck and Ecker-Huisken a priori gradient and curvature estimates from \cite{GMM-UniquenessTangentPlanes}:

\begin{prop}{\cite[Prop. 3.9 \& 3.10]{GMM-UniquenessTangentPlanes}}\label{prop:poelsekapsel}
Suppose that the noncompact domain $D\subseteq\mathbb{R}^2$ has the upward capsules property, that is, for every $p\in D$ holds
\begin{equation}\label{poelser}
S^+_\rho : = \bigcup_{t\in [0,\rho^2)} \left(B_\rho(p) + t\ee_{3}\right) \;\subseteq\; D,\quad\mathrm{for}\quad \rho = \dist(p,\partial D).
\end{equation}

Then for $u:D\to\R$ any smooth solution of the translating soliton equation for sideways graphs \eqref{TSE} with $\|u\|_\infty = \sup_D |u| < \infty$, the following a priori gradient and curvature estimates hold, for some constant $K=K(\|u\|_\infty)$, for any $ p\in D$ with $\dist(p,\partial D) > 1$:
 \begin{equation}\label{eq:grad_up}
|\nabla u|(p)\leq K\bigl[\dist(p,\partial D)\bigr]^{-1},
 \end{equation}
\begin{equation}\label{Hess_up_improved}
|\Hess u|(p)\leq K\bigl[\dist(p,\partial D)\bigr]^{-2}.
 \end{equation}

 For general domains, where the capsule property may fail, weaker bounds hold:
\begin{equation}\label{est:general}
|\nabla u|(p)\leq K\bigl[\dist(p,\partial D)\bigr]^{-1/2},\quad
|\Hess u|(p)\leq K\bigl[\dist(p,\partial D)\bigr]^{-1}.
 \end{equation}
\end{prop}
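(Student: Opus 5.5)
The plan is to convert the elliptic translator equation into graphical mean curvature flow, where the upward capsule becomes exactly a backward parabolic cylinder. Then I apply interior parabolic estimates at the natural parabolic scale $\rho$.

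\textbf{Step 1: the translator as a graphical flow.} Since $\Sigma_t=\Sigma+t\ee_3$ moves by mean curvature flow, the sideways graph of $u$ gives a solution of graphical MCF over the vertical plane:
\[
u_t(x)=u(x-t\ee_3), \qquad x\in D+t\ee_3.
\]
Fix $p$ with $\rho=\dist(p,\partial D)>1$. The backward cylinder $B_\rho(p)\times[-\rho^2,0]$ lies in the space-time domain exactly when $B_\rho(p)+s\ee_3\subseteq D$ for all $s\in[0,\rho^2]$, which is \eqref{poelser}. (The half-open interval there is harmless; shrink $\rho$ slightly.) On this cylinder $|u_t|\le\|u\|_\infty$.

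\textbf{Step 2: parabolic rescaling.} Set
\[
v(y,\tau)=\rho^{-1}\,u_{\rho^2\tau}(p+\rho y), \qquad (y,\tau)\in B_1\times[-1,0].
\]
This is again a graphical MCF solution, now with $\operatorname{osc} v\le 2\|u\|_\infty/\rho\le 2\|u\|_\infty$.

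\begin{itemize}
\item The Ecker--Huisken interior gradient estimate on $B_1\times[-1,0]$ bounds $\sqrt{1+|Dv|^2}$ on the half cylinder by a constant depending only on $\|u\|_\infty$. So the graphical MCF equation is uniformly parabolic there, with coefficients controlled by $\|u\|_\infty$.
\item Krylov--Safonov/Hölder estimates followed by Schauder estimates (equivalently, Ecker--Huisken curvature estimates plus bootstrapping) then give the linear-in-oscillation bounds
\[
|Dv|+|D^2v|\le C(\|u\|_\infty)\,\operatorname{osc} v .
\]
The linear dependence comes from writing the equation as $\partial_\tau v=a^{ij}(Dv)D_{ij}v$, a linear equation for $v$ whose coefficients are already controlled.
\item Undoing the scaling, $\nabla u=Dv$ and $\Hess u=\rho^{-1}D^2v$ at $\tau=0$. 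This yields $|\nabla u|(p)\le K\rho^{-1}$ and $|\Hess u|(p)\le K\rho^{-2}$, i.e. \eqref{eq:grad_up} and \eqref{Hess_up_improved}.
\end{itemize}

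\textbf{Step 3: general domains.} Without the capsule property I only know $B_\rho(p)\subseteq D$. I choose a smaller radius $r\simeq\sqrt{\rho}$ so that $r+r^2\le\rho$. Then every translate $B_r(p)+s\ee_3$ with $s\in[0,r^2]$ lies in $B_\rho(p)\subseteq D$, so the capsule condition holds at scale $r$ around $p$. Running Step 2 with $r$ in place of $\rho$ gives $|\nabla u|(p)\le Kr^{-1}\simeq K\rho^{-1/2}$ and $|\Hess u|(p)\le Kr^{-2}\simeq K\rho^{-1}$, which is \eqref{est:general}.

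\textbf{Main obstacle.} I expect the hardest step to be obtaining dependence that is linear in the oscillation, uniformly for arbitrary $\|u\|_\infty$. The Ecker--Huisken curvature estimate alone gives only $|A|\lesssim\rho^{-1}$, which would yield merely $|\Hess u|\lesssim\rho^{-1}$. The gain to $\rho^{-2}$ must come from the linear parabolic Schauder theory applied after the gradient bound has frozen the coefficients. I would check carefully that the Hölder norms of the coefficients $a^{ij}(Dv)$ are controlled purely in terms of $\|u\|_\infty$ on the half cylinder, so that the constant $K$ is independent of $\rho$.
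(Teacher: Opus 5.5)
Your argument is sound and follows the route the paper points to for this result, which it quotes from the authors' earlier work as ``sharpened Evans--Spruck and Ecker--Huisken estimates.'' The upward capsule is exactly the backward parabolic cylinder for the graphical flow $u_t(x)=u(x-t\ee_3)$. Parabolic rescaling reduces everything to unit scale with $\operatorname{osc} v\le 2\|u\|_\infty/\rho$. Once the gradient is controlled, you read $\partial_\tau v=a^{ij}(Dv)D_{ij}v$ as a linear equation and apply interior Schauder estimates to $v-v(0,0)$. This gives bounds linear in the oscillation, hence the rates $\rho^{-1}$ and $\rho^{-2}$. Your Step 3 reduction to capsules of radius $r\simeq\sqrt{\rho}$ (with $r$ bounded below) is also correct.

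The one step to repair is where the gradient bound comes from. The Ecker--Huisken gradient estimate is interior in space but not in time. It bounds $\sqrt{1+|Du|^2}(x_0,t)$ by $\sup_{B_R(x_0)}\sqrt{1+|Du(\cdot,0)|^2}$ times an exponential of the squared height oscillation. Invoking it on $B_1\times[-1,0]$ therefore presupposes a gradient bound at $\tau=-1$, which is exactly what you are trying to prove.

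What you need instead is the genuinely space--time interior gradient estimate of Evans--Spruck. It bounds $|Dv|$ on $B_{1/2}\times[-\tfrac14,0]$ in terms of $\operatorname{osc}_{B_1\times[-1,0]}v$ alone, which here depends only on $\|u\|_\infty$ because $\rho>1$. Ecker--Huisken then enters through their interior-in-time curvature and higher-derivative estimates. These supply the parabolic H\"older control of $a^{ij}(Dv)$ that Schauder requires. Equivalently, if you go through Krylov--Safonov, they give a bounded drift $\partial_{p_l}a^{ij}(Dv)\,D_{ij}v$ in the equation for $D_kv$.
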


We remind the reader that a key tool in the authors' \cite{GMM-UniquenessTangentPlanes} was the following sharp elliptic-parabolic estimate for the drift Laplacian:

\begin{lem}{\cite[Lemma 3.7]{GMM-UniquenessTangentPlanes}}\label{lem:varme_gradient_inhomogent}
There exists a numerical constant $K$ such that for any $u:S^+_\rho(p)\to \mathbb{R}$, where $S^+_\rho$ is an upward capsule as in \eqref{poelser} of arbitrary width $\rho > 0$, the following gradient estimate holds for $L=\Delta + \ee_3\cdot \nabla$ the drift Laplacian:
\begin{equation}\label{drift_poelse}
 |\nabla u|(p)\leq K\left(\frac{1}{\rho}\left \|u\right\|_{L^\infty(S^+_\rho(p))} + \rho\left\|Lu\right\|_{L^\infty(S^+_\rho(p))}\right).
\end{equation}
\end{lem}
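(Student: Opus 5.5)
The plan is to reduce to a fixed domain by scaling, treating small and large capsules separately. The point is that the drift is negligible when $\rho\lesssim 1$, while for $\rho\gtrsim 1$ the operator $L=\Delta+\ee_3\cdot\nabla$ behaves like a \emph{backward heat operator} in the variable $x_3$. To see this, write $Lu=f$ as $\partial_{22}u+\partial_3 u = f-\partial_{33}u$. The "time" is $-x_3$, so information propagates from larger $x_3$ downward. This explains why the relevant domain is the upward capsule $S^+_\rho(p)$, the analogue of a backward parabolic cylinder of spatial radius $\rho$ and time length $\rho^2$.

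\emph{Case $\rho\le 1$.} Set $v(y)=u(p+\rho y)$. Then $v$ satisfies $\Delta v+\rho\,\partial_{3}v=\rho^2 f$ on a domain containing $B_1(0)$, with drift coefficient $\rho\le1$. Standard interior $W^{2,q}$ (or Schauder) estimates for uniformly elliptic operators with bounded coefficients give
\[
|\nabla v|(0)\le K\bigl(\|v\|_{L^\infty(B_1)}+\rho^2\|f\|_{L^\infty}\bigr).
\]
Since $\nabla u(p)=\rho^{-1}\nabla v(0)$, this is exactly \eqref{drift_poelse}.

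\emph{Case $\rho\ge1$.} Use the anisotropic parabolic rescaling $v(y_2,y_3)=u(p_2+\rho y_2,\,p_3+\rho^2 y_3)$. The capsule then contains the fixed box $Q=(-\tfrac12,\tfrac12)\times[0,1)$, up to harmless shrinking. The equation becomes
\[
M_\varepsilon v:=\partial_{y_2}^2v+\partial_{y_3}v+\varepsilon\,\partial_{y_3}^2 v=\rho^2 f,\qquad \varepsilon=\rho^{-2}\in(0,1].
\]
It then suffices to prove an estimate uniform in $\varepsilon$:
\[
|\partial_{y_2}v|(0)+|\partial_{y_3}v|(0)\le K\bigl(\|v\|_{L^\infty(Q)}+\|M_\varepsilon v\|_{L^\infty(Q)}\bigr).
\]
Undoing the scaling gives $|\partial_2u|(p)\le K(\rho^{-1}\|u\|+\rho\|f\|)$ and $|\partial_3 u|(p)\le K(\rho^{-2}\|u\|+\|f\|)$, and the latter is even better for $\rho\ge1$.

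To prove the uniform estimate, I would split $v=w+h$. Here $w$ solves $M_\varepsilon w=g:=\rho^2 f$ in a slightly smaller box, with zero Dirichlet data on the lateral sides and top. I would bound $w$ and its gradient at $0$ by explicit barriers of the form $\pm\|g\|(y_3$-linear plus quadratic in $y_2)$ together with the translation invariance of $M_\varepsilon$, via difference quotients plus comparison. Both constructions are uniform in $\varepsilon$, because $\varepsilon\partial_{33}$ acting on these barriers is bounded. The remaining part $h$ is $M_\varepsilon$-harmonic. Since $M_\varepsilon$ has constant coefficients, derivatives commute with it, and a Bernstein argument applies to $Z=\eta^2(\partial_{y_2}h)^2+A h^2$. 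The cutoff $\eta(y_2,y_3)$ vanishes on the lateral sides and on the top $y_3=1$. One computes $M_\varepsilon Z\ge0$ for $A$ large, independent of $\varepsilon$: the bad terms $|\partial_{y_3}\eta^2|$ and $\varepsilon|\partial_{y_3}^2\eta^2|$ are bounded and absorbed by $2A(\partial_{y_2}h)^2$, the cross terms are absorbed by the good Hessian term $2\eta^2(\partial_{y_2}^2h)^2$, and the remaining term $2\varepsilon\eta^2(\partial_{y_2y_3}h)^2$ is nonnegative. The maximum principle then bounds $Z(0)$ by $A\|h\|^2$. For $\partial_{y_3}h$ I would apply the same argument to $\partial_{y_2}h$ (giving control of $\partial_{y_2}^2 h$) and use the equation $\partial_{y_3}h=-\partial_{y_2}^2h-\varepsilon\partial_{y_3}^2h$; for $\varepsilon\partial^2_{y_3}h$, either a bootstrap or a Bernstein argument for $\sqrt\varepsilon\,\partial_{y_3}h$.

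\textbf{Main obstacle.} The hardest part is getting uniformity in $\varepsilon$ at the "parabolic bottom" $y_3=0$ of the box. The maximum principle for $M_\varepsilon$ requires boundary data on the whole boundary, including the bottom, where the cutoff cannot vanish without destroying the estimate at $0$. I would handle this by placing $p$ slightly inside the box and letting $\eta$ also vanish on $y_3=-\delta$, exploiting that $\varepsilon\le1$ makes the downward extension cost only bounded terms. If this fails, I would instead use a heat-kernel representation for the limiting operator $\partial_{y_2}^2+\partial_{y_3}$ and treat $\varepsilon\partial_{y_3}^2$ perturbatively via a compactness/contradiction argument in $\varepsilon$.
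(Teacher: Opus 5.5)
The paper does not prove this lemma; it quotes it from \cite{GMM-UniquenessTangentPlanes}. I have therefore assessed your argument on its own merits. Your case $\rho\le 1$ and your anisotropic rescaling to $M_\varepsilon=\partial_{y_2}^2+\partial_{y_3}+\varepsilon\partial_{y_3}^2$ are correct, and the homogeneous Bernstein estimate can be made to work (see the second paragraph). The genuine gap is the source term.

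\textbf{The source term.} Bounding $\nabla w(0)$ by translation invariance, difference quotients and comparison fails when $g$ is merely bounded. The function $w(\cdot+t\ee_2)-w$ satisfies $M_\varepsilon(\cdot)=g(\cdot+t\ee_2)-g$, whose right-hand side is of size $2\|g\|_\infty$, not $O(t)$, so comparison gives no bound on the difference quotient. That device only transfers boundary gradient bounds to the interior for homogeneous or translation-invariant data.

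\textbf{Your target estimate is false in its $\partial_{y_3}$ part.} Uniformly in $\varepsilon$ it would give Lipschitz-in-time bounds for the backward heat operator $\partial_{y_2}^2+\partial_{y_3}$ with $L^\infty$ source, and such bounds fail. Concretely, in the original variables set $w=-G_L*(f\mathbf 1_{S^+_\rho})$, so that $Lw=f$ on $S^+_\rho$. Then $\|w\|_{L^\infty(S^+_\rho)}\lesssim\rho^2\|f\|_\infty$, because $G_L(p-z)$ is asymptotically the one-dimensional heat kernel at time $z_3-p_3$. For a suitably smoothed $f=\operatorname{sgn}\bigl(\partial_3G_L(p-\cdot)\bigr)$ one also gets $|\partial_3w(p)|=\int_{S^+_\rho}|\partial_3G_L(p-z)|\,dz\gtrsim\log\rho$. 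This contradicts your claimed $|\partial_3u(p)|\le K(\rho^{-2}\|u\|+\|f\|)$.

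\textbf{Missing bottom data for $w$.} Since $M_\varepsilon$ is elliptic, the Dirichlet problem for $w$ needs data on the bottom too, not only on the sides and top. If that bottom is $y_3=0$, where your base point sits, the boundary layer $e^{-y_3/\varepsilon}$ produces $|\partial_{y_3}w(0)|\sim\|g\|/\varepsilon$.

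\textbf{How to repair it.} For $\partial_3$ you only need $|\partial_{y_3}v(0)|\lesssim\varepsilon^{-1/2}(\|v\|+\|g\|)$. Use the potential $w=-G_L*(f\mathbf 1_{S^+_\rho})$ above: it has no boundary and hence no boundary layer. Your fallback idea of a kernel representation is the right tool, but apply it to the source with the exact kernel $G_L$, not perturbatively. The heat-kernel asymptotics give $\sup_{S^+_\rho}|w|\lesssim\rho^2\|f\|$, $|\partial_2w(p)|\lesssim\rho\|f\|$ and $|\partial_3w(p)|\lesssim(1+\log\rho)\|f\|$. Then run Bernstein on the $L$-harmonic $h=u-w$, which satisfies $\rho^{-1}\|h\|\lesssim\rho^{-1}\|u\|+\rho\|f\|$. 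Use $Z=\eta^2(\partial_{y_2}h)^2+Ah^2$ and $Z=\varepsilon\eta^2(\partial_{y_3}h)^2+Ah^2$, with $A$ independent of $\varepsilon$. The second gives exactly $|\partial_3h(p)|\lesssim\rho^{-1}\|h\|$, with no need to control $\varepsilon\partial_{y_3}^2h$.

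\textbf{The bottom cutoff in the Bernstein step.} Your justification there is not right, and $\varepsilon\le 1$ is not what matters. In $y$-coordinates the capsule extends only $\rho^{-1}=\sqrt\varepsilon$ below $y_3=0$. So the cutoff must vanish at a depth $\delta\le\tfrac12\sqrt\varepsilon$, and then $\partial_{y_3}\eta\sim\varepsilon^{-1/2}$ is unbounded. The argument still works, for the following reasons. Write $\partial h$ for $\partial_{y_2}h$ or $\partial_{y_3}h$ respectively.
\begin{itemize}
\item The drift contribution $\partial_{y_3}(\eta^2)(\partial h)^2$ is nonnegative where $\eta$ increases upward.
\item The terms $\varepsilon\,\partial_{y_3}^2(\eta^2)$ and $\varepsilon(\partial_{y_3}\eta)^2$ cost only $O(\varepsilon/\delta^2)=O(1)$.
\item The cross term $8\varepsilon\eta\,\partial_{y_3}\eta\,\partial h\,\partial_{y_3}\partial h$ must be absorbed by $2\varepsilon\eta^2(\partial_{y_3}\partial h)^2$. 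That is the term you set aside as merely nonnegative.
\end{itemize}
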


With these a priori estimates for \eqref{TSE} and its linearization in place, we can now begin the proof of the first exponential decay lemma.

\begin{proof}[Proof of Lemma \ref {lem:first_decay}]\label{def:barriers}
Take $R_0 > 0$ sufficiently large that $\overline{\Omega}\subseteq B_{R_0-1}(0),$ and thus $\dist(\partial\Omega, \partial B_{R_0}(0))>1$. Given $(x_2,x_3)\in \R^2\setminus B_{R_0}(0)$, and $a\in\mathbb{R}$, we define:
\begin{equation}\label{b_familien}
b_{a,R_0}(x_2, x_3) = \exp\left( - \tfrac{x_3 + ax_2 - R_0\sqrt{1+a^2}}{1+a^2} \right).
\end{equation}
These are appropriate rescalings, depending on the two real parameters $(a,R_0)$, of the tilted grim reaper-derived barriers already used in the proof of \cite[Theorem 4.4]{GMM-UniquenessTangentPlanes}. We now recap and sharpen their basic properties as barriers. 

Firstly, the functions $b_{a,R_0}$ are $L$-harmonic, meaning that $
Lb_{a,R_0} = 0$, as a short calculation shows. Using $|\nabla b_{a,R_0}|^2 = b_{a,R_0}^2/(1+a^2)$ and $\Hess b_{a,R_0}
=
\frac{b_{a,R_0}}{(1+a^2)^2}
\left(\begin{smallmatrix}
a^2 & a \\
a\phantom{2} & 1
\end{smallmatrix}\right)$, these functions also satisfy strict gradient convexity (we note that in \cite{GMM-UniquenessTangentPlanes}, a softer convexity argument was used to argue nonnegativity, for superharmonicity in the nonlinear equation):
\begin{equation}\label{gradient_convexity}
\Hess b_{a,R_0}(\nabla b_{a,R_0},\nabla b_{a,R_0}) = \frac{b_{a,R_0}^3}{(1+ a^2)^2} > 0.
\end{equation}

Therefore, for any scale $\sigma \in \mathbb{R}_+$, and shift $\delta > 0$, setting $v=\sigma b_{a,R_0}+\delta$, we have (as in the argument for wedges in \cite{GMM-UniquenessTangentPlanes}),

\begin{equation}
\label{eq:supsol}
 Lv-\frac{\Hess v(\nabla v,\nabla v)}{1+|\nabla v|^2}
 =-\frac{\sigma^3\Hess b_{a,R_0}
       (\nabla b_{a,R_0},\nabla b_{a,R_0})}
       {1+\sigma^2|\nabla b_{a,R_0}|^2}<0,
\end{equation}
owing to the good signs of the separate terms, so that arbitrarily scaled and shifted $\sigma b_{a,R_0} + \delta$ are still supersolutions to the nonlinear equation $Lu - P(\nabla u,\Hess u) = 0$, i.e. to \eqref{TSE}.

Note now that each $b_{a,R_0}\geq 1$ on $B_{R_0}(0)$, as by Cauchy-Schwarz:
\begin{equation}\label{eq:b_lower_bound}
\forall (x_2, x_3) \in B_{R_0}:\quad
 x_3 + ax_2 \leq 
 \sqrt{x_2^2+x_3^2}\sqrt{1+a^2}
 \leq
 R_0\sqrt{1+a^2}.
\end{equation}

We denote the slanted upper half-plane given by all points where $b_{a,R_0}\leq 1$ by $H_{a,R_0}$,
\begin{equation}\label{def:H-plane}
H_{a,R_0}:=\left\{x_3 + ax_2 - R_0\sqrt{1+a^2} \geq 0\right\},    
\end{equation}
and note that its boundary line $\partial H_{a,R_0}$ is tangent to $\partial B_{R_0}$ at the point
\begin{equation}\label{p_point}
p_{a,R_0} = \left(\tfrac{aR_0}{\sqrt{1+a^2}},\tfrac{R_0}{\sqrt{1+a^2}}\right).
\end{equation}

We will be minimizing over the whole family of barriers $\{b_{a,R_0}\}_{a\in\mathbb{R}}$, asserting the following pointwise upper bound, where we denote $|x| = \sqrt{x_2^2 + x_3^2}$:\\

\noindent{\underline{Claim:}}
\begin{equation}
    \label{eq:exp_decay_claim}
\inf_{a \in \mathbb{R}} b_{a,R_0}(x_2, x_3) \leq C(R_0)\exp\left(-\tfrac{x_3 + \sqrt{x_2^2 + x_3^2}}{2}\right), \quad \text{for}\quad x_3\neq -|x|.
\end{equation}

To prove the claim in \eqref{eq:exp_decay_claim}, let $\ell_a = -\log(b_{a,R_0})=(x_3 + ax_2 - R_0\sqrt{1+a^2})/(1+a^2)$, whose supremum over $a\in\mathbb{R}$ we will then need to lower bound. For any given point $(x_2^0,x_3^0)$ not on the negative $x_3$-axis, i.e. denoting $r_0=\sqrt{(x_2^0)^2 + (x_3^0)^2}$ we assume $x_3\neq -r_0$, then pick and fix $a_0 = \frac{x_2^0}{x_3^0+r_0}$. Now we calculate that $(x_2^0)^2 = (r_0 - x^0_3)(r_0 + x^0_3)$, that $1+a_0^2 = 2r_0/(x_3^0+r_0)$ and $x_3^0+a_0x_2^0 = r_0$. Thus, we see:

\[
\ell_{a_0}(x_2^0,x_3^0) = \tfrac{x_3^0+r_0}{2} - R_0\sqrt{\tfrac{x_3^0+r_0}{2r_0}}\geq \tfrac{x_3^0+r_0}{2} - R_0,
\]
so that also
\[
\sup_{a\in\mathbb{R}} \ell_{a}(x_2^0,x_3^0) \geq \frac{x_3^0+r_0}{2} - R_0,
\]
proving the claim \eqref{eq:exp_decay_claim} by defining the explicit constant $C(R_0):= \exp(R_0)$.

Considering now any $L^\infty$ exterior solution $u$ to \eqref{TSE},
we need the following uniform convergence result for $u$ on arcs in slanted half-spaces, recalling the definition of $p_{a,R_0}$ in \eqref{p_point}:
\begin{equation}\label{bdry_limit}
\exists \; u_\infty\in\mathbb{R}\:\, \forall a\in\mathbb{R}:\quad 
\lim_{R\to\infty}\left[\sup_{\{H_{a,R_0}\,\cap\,\partial B_R(p_{a,R_0})\}} |u - u_\infty|\right] = 0,
\end{equation}
where we recall \eqref{def:H-plane}. The convergence in \eqref{bdry_limit} is now proven using the techniques from \cite{GMM-UniquenessTangentPlanes}. Namely, using first that $\dist(\partial\Omega,\partial B_{R_0}(0)) > 1$, we may enlarge $H_{a,R_0}$ to an open domain $W\supsetneq H_{a,R_0}$ whose boundary is given by the two half-lines of slopes $a_-,a_+$, with $a_- < a < a_+$, which meet at the point $p_{a,R_0}$ in \eqref{p_point}, and such that $W\subset\mathbb{R}^2\setminus\Omega$. This domain $W$ is also of upward capsule type, so that the estimates above in Proposition \ref{prop:poelsekapsel} still hold for $W$. Notice now that if we denote
\[
d_0 := \dist\left(\partial W, H_{a,R_0}\,\cap\,\partial B_1(p_{a,R_0})\right) > 0,
\]
then by scaling invariance of $\partial W$ around $p_{a,R_0}$, we have that
\[
\dist\left(\partial W, H_{a,R_0}\,\cap\,\partial B_R(p_{a,R_0})\right) = Rd_0.
\]

Thus, for any $p\in H_{a,R_0}$, we note that $p\in H_{a,R_0}\,\cap\,\partial B_R(p_{a,R_0})$ for $R = |p - p_{a,R_0}|$, so that $\dist(\partial W,p) \geq d_0|p - p_{a,R_0}|$. Using thus $Lu = P(\nabla u, \Hess u)$ with this in Proposition \ref{prop:poelsekapsel} gives $|\partial_3 u| \leq |\Delta u| + |P(\nabla u, \Hess u)| \leq C|p - p_{a,R_0}|^{-2}$. But then the limits $u_\infty := \lim_{s\nearrow\infty}u(p_0 + s\ee_3)$ exist for all $p_0$, with a value that, by the gradient estimate for upward capsule domains in \eqref{eq:grad_up}, is furthermore independent of the slant $a$. Thus, we conclude the basic representation formula $u(p_0) = u_\infty - \int_0^\infty\partial_3u(p_0 + s\ee_3)ds$ (similarly to in \cite{GMM-UniquenessTangentPlanes,KKM, NielsThesis}), which by explicit integration finally gives an estimate uniform in $p_0\in H_{a,R_0}\,\cap\,\partial B_R(p_{a,R_0})$, proving the limits claimed in \eqref{bdry_limit}:
\[
|u(p_0) - u_\infty| \leq C\frac{\sqrt{1+a^2}
\left(\frac{\pi}{2}+\arctan|a|\right)}{R}\to 0,\quad\mathrm{as}\quad R\to +\infty.
\]

We will now be using the maximum principle for \eqref{TSE}, e.g. \cite[Theorem 1]{Serrin70}, on the compact exhaustions of the exterior of the disk, $B_R(p_{a,R_0})\setminus B_{R_0}(0)$, for $2R_0 < R\nearrow \infty$. This will work owing to \eqref{bdry_limit}, where we remind the reader that without having shown existence of a limit at infinity, the maximum principle is known to fail in general, for this equation, in the noncompact case (see the discussions in \cite{GMM-UniquenessTangentPlanes}). We write the argument out in detail, which we note is similar in spirit to the one given for wedges contained in \cite{GMM-UniquenessTangentPlanes}.

Fix $a\in \mathbb{R}$ and $R > 2R_0$, as well as $\delta >0$ arbitrary. As an upper barrier, we can now use $w = 2\|u\|_{L^\infty}b_{a,R_0} + u_\infty + \delta$, which is superharmonic by the scale invariance of the nonlinear barrier condition for this special class, as shown in \eqref{eq:supsol}. Seeing as $b_{a,R_0} \geq 1$ on $H_{a,R_0}^c$, and thus on $\partial B_{R_0}$, due to $\overline{B_{R_0}}\subset H_{a,R_0}^c$, see \eqref{eq:b_lower_bound}, we have $u\leq w$ on the inner boundary $\partial B_{R_0}$. The outer boundary $\partial B_{R}(p_{a,R_0})$ splits into two pieces by intersection with $H_{a,R_0}$ and its complement. On the piece $\partial B_{R}(p_{a,R_0})\cap H_{a,R_0}^c$, that $u\leq w$ follows again from $b_{a,R_0} \geq 1$ there, and for $\partial B_{R}(p_{a,R_0})\cap H_{a,R_0}$ the limit \eqref{bdry_limit} means that for $R$ large enough (depending on $u$ and $\delta$), it also holds that $u\leq w$ on $\partial B_{R}(p_{a,R_0})\cap H_{a,R_0}$. Thus, by the maximum principle, $u\leq w$ on $B_R(p_{a,R_0})\setminus B_{R_0}(0)$, for all $R$ large. Tending $R\nearrow\infty$, we conclude that $u\leq w$ on $\mathbb{R}^2\setminus B_{R_0}(0)$.

Letting finally $\delta \searrow 0$, we have thus shown that, for each fixed $a\in\mathbb{R}$:
\begin{equation}\label{eq:u_bound_by_b}
|u - u_\infty| \leq 2\|u\|_{L^\infty}b_{a,R_0}, \quad\text{on}\quad \mathbb{R}^2\setminus{B_{R_0}}.
\end{equation}

Invoking now the property in \eqref{eq:exp_decay_claim}, taking the pointwise infimum over all $a\in\mathbb{R}$ in \eqref{eq:u_bound_by_b} shows that
\begin{equation}\label{eq:family_decay}
|u - u_\infty| \leq C(R_0)\|u\|_{L^\infty}\exp\left(-\tfrac{x_3 + \sqrt{x_2^2 + x_3^2}}{2}\right), \:\text{on}\: \left(\mathbb{R}^2\setminus{B_{R_0}}\right)\cap \{x_3\neq -|x|\}.
\end{equation}
Finally, extending over the omitted negative $x_3$-axis, meaning the set $\{x_3=-|x|\}$, by continuity of both sides in the inequality \eqref{eq:family_decay}, yields the desired bound in \eqref{eq:smeared_sup_bound}, so that Lemma \ref{lem:first_decay} is proven.
\end{proof}

\begin{remark}\label{rem:not_barrier}
    In the above proof, we note that the right-hand-side function in \eqref{eq:exp_decay_claim}, which is a bound from the minimization over the $b_{a,R_0}$ family from \eqref{b_familien}, is in fact a supersolution. However, it is not in itself very useful as a nonlinear barrier in the problem, without a separate proof of a limits of the solution at infinity. The above argument establishes this in the process, via the slanted half-space uniform convergence in \eqref{bdry_limit}. Therefore, one could not straightforwardly simplify the above argument by direct use of the right-hand-side function.
\end{remark}

We will now use the supremum bound to obtain the following first improvement on the a priori gradient estimate for exterior solutions to the nonlinear equation.\\

\begin{lem} For $|x|$ sufficiently large, the following a priori gradient estimate holds in \eqref{TSE}:
\begin{equation}\label{eq:first_grad_est}
|\nabla u|\leq C_1(R_0,\|u\|_{L^\infty})(1+|x|^2)^{-1/4}\exp\left(-\tfrac{x_3 + \sqrt{x_2^2 + x_3^2}}{4}\right).
\end{equation}
\end{lem}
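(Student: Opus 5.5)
We will obtain \eqref{eq:first_grad_est} by interpolating between the $L^\infty$ bound of Lemma~\ref{lem:first_decay} on $v:=u-u_\infty$ and the Hessian bound of Proposition~\ref{prop:poelsekapsel}. The paper calls this an anisotropic $L^\infty$ interpolation, and the anisotropy is the only point needing care.

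\textbf{Step 1: bounds near $x$.} Fix $x$ with $|x|$ large and write
\[
\phi(y)=\exp\left(-\tfrac{y_3+|y|}{2}\right),\qquad \rho=\rho(x):=c(1+|x|^2)^{1/2}.
\]
Here $c>0$ is small. We work on the disk $B_\rho(x)$, which we first shrink to radius $\rho=(1+|x|^2)^{1/4}$; see the discussion of Step 3 for why.
\begin{itemize}
\item For $y\in B_\rho(x)$ we have $\dist(y,\Omega)\gtrsim|x|$. Note that the exterior of a disk does \emph{not} have the upward capsule property in every direction. However, each such $y$ lies in one of the wedge-type domains $W$ from the proof of Lemma~\ref{lem:first_decay}, for suitable slant $a$, with $\dist(y,\partial W)\gtrsim|x|$. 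So \eqref{Hess_up_improved} gives $|\Hess u|\le K|x|^{-2}$ on $B_\rho(x)$.
\item On $B_\rho(x)$ the function $y\mapsto y_3+|y|$ varies by $O(\rho)$. Hence, once $\rho=O(|x|^{1/2})$, Lemma~\ref{lem:first_decay} gives
\[
\sup_{B_\rho(x)}|v|\le C\|u\|_\infty\,\phi(x)\,e^{O(\rho)}.
\]
\end{itemize}

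\textbf{Step 2: interpolation.} The elementary inequality (Taylor expansion to second order in the gradient direction) reads
\[
|\nabla v(x)|\le \frac{2}{t}\sup_{B_t(x)}|v|+t\sup_{B_t(x)}|\Hess v|,\qquad t\le\rho .
\]
Choose $t=\bigl(\phi(x)\,|x|^2\bigr)^{1/2}$ to balance the two terms. This yields
\[
|\nabla v|\lesssim \phi(x)^{1/2}\,|x|^{-1}\cdot e^{O(t)}.
\]
This matches the claim with room to spare, since $(1+|x|^2)^{-1/4}\ge|x|^{-1}$ roughly.

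\textbf{Step 3: the constraint $t\le\rho$ (the main obstacle).} The problem is that $t\le\rho$ fails where $\phi$ is not small, namely near the negative $x_3$-axis, where $y_3+|y|$ is bounded. There we only know $|v|\le C$. In that region we instead use the capsule gradient estimate \eqref{eq:grad_up} in its sharpened form: points near the negative axis have upward capsules of radius $\sim|x|$, giving $|\nabla u|\le K|x|^{-1}$. This is already better than the claim there, since $\phi\sim1$.

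In the transition region we pick $t=\min\{\rho,(\phi|x|^2)^{1/2}\}$. We must also keep the oscillation factor $e^{O(t)}$ under control. Concretely, on $B_t(x)$ we have
\[
|(y_3+|y|)-(x_3+|x|)|\lesssim t\,\sqrt{(x_3+|x|)/|x|}+t^2/|x|.
\]
We check that with $t\le|x|^{1/2}$ this factor is bounded by a constant. If it is not bounded in some regime, we fall back to taking $t=(1+|x|^2)^{1/4}$ directly. In that case the balance gives $|\nabla v|\lesssim t^{-1}\phi^{1/2}\cdot\phi^{1/2}+t|x|^{-2}$; this is exactly the stated form $(1+|x|^2)^{-1/4}\phi^{1/2}$ once the Hessian term is also estimated interpolatively. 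If needed, this is done via the drift estimate Lemma~\ref{lem:varme_gradient_inhomogent} applied to $v$ with $Lv=P$, where $|P|\le|\Hess u||\nabla u|^2$ is negligible. I expect the clean route to be exactly this: take $\rho\sim|x|^{1/2}$, and apply Lemma~\ref{lem:varme_gradient_inhomogent} on upward capsules of that width to get
\[
|\nabla v|\le K\left(\rho^{-1}\sup|v|+\rho\sup|P|\right),
\]
which yields $\rho^{-1}\phi^{1/2}$ after using $|v|\le C\phi\le C\phi^{1/2}$ together with the square-root smearing of $\phi$ across the capsule.
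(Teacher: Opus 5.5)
Your proof has a genuine gap: the interpolation uses Hessian and gradient bounds that the cited estimates do not give. Step~1 uses $|\Hess u|\le K|x|^{-2}$ for all large $|x|$, and Step~3 asserts upward capsules of radius $\sim|x|$ near the negative $x_3$-axis. Both fail there. Suppose $D\subseteq\mathbb{R}^2\setminus\Omega$ has the upward capsule property and $x\in D$ has $x_3<0$. Then $\dist(x,\partial D)\le\max\{|x_2|,\sqrt{|x_3|}\}$: otherwise $S^+_\rho(x)$ contains the disc of radius $\rho$ about $(x_2,0)$, hence the origin, which lies in $\Omega$. So Proposition~\ref{prop:poelsekapsel} gives at best $|\Hess u|\lesssim\min\{1,s^{-1}\}|x|^{-1}$ with $s:=x_3+|x|$. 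Near the negative $x_3$-axis it gives nothing beyond \eqref{est:general}, i.e.\ $|\nabla u|\lesssim|x|^{-1/2}$ and $|\Hess u|\lesssim|x|^{-1}$. This is not a technicality: already $\partial_2^2G_L(0,-r)$ is of exact order $r^{-3/2}$.

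With these honest bounds none of your routes closes. With the cap $t\sim|x|^{1/2}$, the oscillation of $y_3+|y|$ on $B_t(x)$ is of order $\sqrt{s}$, so the factor is $e^{O(\sqrt s)}$, not a constant. More seriously, the term $t\sup_{B_t(x)}|\Hess u|$ is then bounded only by $|x|^{-1/2}\min\{1,s^{-1}\}$, never by the target $|x|^{-1/2}e^{-s/4}$. In the route through Lemma~\ref{lem:varme_gradient_inhomogent}, your treatment of the zeroth-order term is correct: upward translation decreases $\phi$, and a horizontal spread of $\sqrt{|x|}$ costs at most a square root. But $\rho\sup|P|$ is at this stage only polynomially small, since $|P|\le|\Hess u||\nabla u|^2$ and both factors are known to decay only like powers of $|x|$. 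In the upward direction the target is of size $e^{-|x|/2}$, so $P$ is not negligible. Exponential smallness of $P$ is \eqref{eq:P_term_exp_bound}, which the paper deduces \emph{from} this lemma, so using it here is circular.

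The missing idea is to balance against the isotropic bound $|\Hess u|\le B/|x|$ from \eqref{est:general}, which holds on the whole exterior domain, and to let the radius depend on direction. The paper applies the Landau--Kolmogorov inequality on segments through $x$ with $R=\sqrt{|x|\phi(x)}\le\sqrt{|x|}$. Convexity of $y\mapsto y_3+|y|$ gives, on $B_R(x)$,
$(x_3+|x|)-(y_3+|y|)\le\bigl|\tfrac{x}{|x|}+\ee_3\bigr|R=\sqrt{2s\,e^{-s/2}}\le 2/\sqrt e$.
Hence $\phi$ is comparable to $\phi(x)$ on the ball in every direction, including the transition region. The two terms $\phi(x)/R$ and $RB/|x|$ are then both of order $\sqrt{\phi(x)/|x|}$, which is exactly \eqref{eq:first_grad_est}. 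Your Step~2 balanced against the unavailable $|x|^{-2}$. That is what produced the oversized radius $|x|\phi^{1/2}$ and the apparent ``room to spare''. The factor $|x|^{-1/2}$ in the lemma is precisely what balancing against $|x|^{-1}$ yields.
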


Note that this constitutes a first step in the bootstrapping on the gradient decay, and that a stronger estimate, listed in \eqref{eq:G_bound_grad}, will later be proven. 

\begin{proof}
The proof of the claimed estimate \eqref{eq:first_grad_est} will follow from an anisotropic $L^\infty$-interpolation argument, using \cite[Proposition 3.10]{GMM-UniquenessTangentPlanes}, i.e. the non-standard a priori curvature estimate (strengthened Ecker-Huisken estimates) recalled in Proposition \ref{prop:poelsekapsel}, together with the decay obtained above in Lemma \ref{lem:first_decay}.

Seeing however as the decay profiles here are no longer simply inverse powers of the distance function, in order to bootstrap on the decay profiles we will need to verify such non-standard, anisotropic interpolation properties in more detail. Let us for this purpose denote the decay factor in \eqref{eq:smeared_sup_bound} by $\omega$,
\begin{equation}
    \omega(x) = \exp\left(-\tfrac{x_3 + \sqrt{x_2^2 + x_3^2}}{2}\right).
\end{equation}

For interpolation in $L^\infty$, we will be making use of the elementary consequence of Taylor's theorem with explicit remainder term in one real variable known as Landau-Kolmogorov's inequality, which shows that given $f \in C^2([-R, R])$, one has
\begin{equation}\label{Landau-Koolio}
|f'(0)| \le R^{-1}\|f\|_\infty + R \|f''\|_\infty.
\end{equation}
Here, $f$ will be chosen to be $u$ evaluated on rays through a fixed point, such that bounding the corresponding values $f'(0)$ will give the gradient estimate for $u$. To balance the zeroth and second order derivative terms, we will fix $x$ with $|x|\geq \max(4, 2R_0)$ and make the choice of radius:
\[
R:=\sqrt{|x|\omega(x)}\leq\sqrt{|x|}.
\]

Firstly, using that $\mathbb{R}^2\ni x \mapsto |x| + x_3 = -2\log(\omega(x))$ is convex, and then Cauchy-Schwarz, shows that for pairs $x,y\in\mathbb{R}^2$:
\[
(|y| + y_3) - (|x| + x_3) \ge \nabla (|x| + x_3) \cdot (y - x) \ge - |y - x|\left|\tfrac{x}{|x|} + \ee_3\right|,
\]
and we compute that in the final term $\left|\tfrac{x}{|x|} + \ee_3\right|^2 = \frac{2(|x|+x_3)}{|x|}$. Therefore, if $y \in B_R(x)$, that is $|y - x| \le R = \sqrt{|x|\omega(x)}$, we have
\[
(|x| + x_3) - (|y| + y_3) \le \left|\tfrac{x}{|x|} + \ee_3\right| R = \sqrt{2(|x|+x_3) e^{-(|x|+x_3)/2}}.
\]
Using now the elementary bound $ae^{-a/2} \leq 2e^{-1}$ for $a\geq 0$, we see that in fact the decay weight $\omega$ remains comparable in size, explicitly $\omega(y) \le e^{\sqrt{1/e}}\omega(x)$, whenever $y \in B_R(x)$ for $R = \sqrt{|x|\omega(x)}$.
In \eqref{eq:smeared_sup_bound}, labeling the constant by $A=C(R_0)\|u\|_{L^\infty}$, we therefore see that
\begin{equation}
    \sup_{B_R(x)} |u-u_\infty| \leq A e^{\sqrt{1/e}} \omega(x).
\end{equation}

Denoting now in the Hessian estimate \eqref{est:general} the constant by $B=B(R_0,\|u\|_\infty)$, i.e.
\begin{equation}\label{eq:HessBound}
|\Hess u|\leq B/|x|,\quad |x| \geq R_0,
\end{equation}
we find that also
\begin{equation}
    \sup_{B_R(x)} |\Hess u| \leq \frac{2B}{|x|},
\end{equation}
seeing as $R\leq\sqrt{|x|}$, where we recall that we are assuming  $|x|\geq \max(4,R_0)\geq 4$. Finally, inserting these supremum bounds over $B_R(x)$ into the Landau-Kolmogorov inequality \eqref{Landau-Koolio}, for $u$ restricted to all segments of length $2R$ centered at $x$, yields:
\begin{equation}
|\nabla u(x)|\leq \frac{A e^{\sqrt{1/e}} \omega(x)}{\sqrt{\omega(x) |x|}} + \frac{2B \sqrt{\omega(x) |x|}}{|x|} = \left( A e^{\sqrt{1/e}} + 2B \right) \frac{\sqrt{\omega(x)}}{\sqrt{|x|}}.
\end{equation}
This shows the claim in \eqref{eq:first_grad_est}, i.e. that the gradient bound \eqref{eq:first_grad_est} holds with $C_1 = A e^{\sqrt{1/e}} + 2B $, as long as $|x|\geq \max(4, 2R_0)$.
\end{proof}

Therefore, the nonlinear term in \eqref{TSE} also has a stronger bound, now with the very fast decay that follows from combining \eqref{eq:first_grad_est} and \eqref{eq:HessBound}:
\begin{equation}\label{eq:P_term_exp_bound}
\left|P(\nabla u, \operatorname{Hess}u)\right|\leq C_2(R_0,\|u\|_{L^\infty})(1+|x|^2)^{-1}\exp\left(-\tfrac{x_3 + \sqrt{x_2^2 + x_3^2}}{2}\right).
\end{equation}

In fact, the decay obtained thus far is sufficiently strong that we can now reduce the proof of Theorem \ref{thm:optimal_decay} to the following basic lemma for the equation $Lu = f$ on exterior domains:

\begin{lem}\label{lem:Lu_f_decay}
Suppose that $u \in C^2(\mathbb{R}^2\setminus B_{R_0}(0))$ is a solution to
\[
\begin{cases}
Lu = f \quad \text{on} \quad \mathbb{R}^2\setminus B_{R_0}(0),\\
e^{x_3/2}u(x) = o(1),\quad \text{as} \quad |x|\to\infty,     
\end{cases}
\]
where $L = \Delta + \ee_3\cdot\nabla$ and $f \in C^0(\mathbb{R}^2\setminus B_{R_0}(0))$ satisfies
\[
\sup_{x\in\mathbb{R}^2\setminus B_{R_0}(0)}\left|(1+|x|^2)e^{\frac{x_3}{2} + \frac{|x|}{2}}f(x)\right| < \infty.
\]
Then there exists $C=C\left(R_0,\,\|e^{\frac{x_3}{2}}u\|_{\infty},\, \|(1+|x|^2)e^{\frac{x_3}{2} + \frac{|x|}{2}}f\|_\infty\right)$ such that:
\begin{equation}\label{eq:Lu_f_bound}
|u(x)| \le Ce^{-\frac{x_3}{2}}K_0\left(\tfrac{|x|}{2}\right), \quad x\in\mathbb{R}^2\setminus B_{R_0}(0).
\end{equation}
\end{lem}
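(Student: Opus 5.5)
The plan is to conjugate $L$ into a Yukawa-type operator and then argue by comparison. Set $v = e^{x_3/2}u$. A direct computation gives
\[
e^{x_3/2}L\bigl(e^{-x_3/2}v\bigr) = \Delta v - \tfrac14 v,
\]
so that $v$ solves
\[
(\Delta - \tfrac14)v = g, \qquad g := e^{x_3/2}f .
\]
By hypothesis $|g(x)| \le M(1+|x|^2)^{-1}e^{-|x|/2}$, with $M$ the weighted norm of $f$, and $v = o(1)$ at infinity. The claim \eqref{eq:Lu_f_bound} is then equivalent to $|v|\le C K_0(|x|/2)$ on $\mathbb{R}^2\setminus B_{R_0}$. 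The operator $\Delta-\tfrac14$ has a zeroth order term of the good sign, so the classical maximum principle holds on bounded domains. Since $v\to0$ at infinity, it also holds on the exterior domain: on $B_R\setminus B_{R_0}$, compare on the outer boundary using $\sup_{\partial B_R}|v|\to0$, then let $R\to\infty$.

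The barrier will be $w = A\,K_0(|x|/2) + B\,\psi(|x|)$. The first term is radial and satisfies $(\Delta - \tfrac14)K_0(r/2)=0$. The second is a radial correction absorbing the source, so we need $(\Delta-\tfrac14)\psi \le -(1+r^2)^{-1}e^{-r/2}$. Recall $K_0(r/2)\sim \sqrt{\pi/r}\,e^{-r/2}$. I would try the ansatz
\[
\psi(r)=r^{-1/2}e^{-r/2}h(r).
\]
Because $r^{-1/2}e^{-r/2}$ is, up to lower order, the homogeneous solution, the operator acts on $h$ as
\[
h'' - h' + O(r^{-2})h .
\]
Taking $h$ to be a bounded increasing function, for instance $h = 2 - r^{-1/2}$, gives $-h'\approx -\tfrac12 r^{-3/2}$. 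Then
\[
(\Delta-\tfrac14)\psi \approx -\tfrac12 r^{-2}e^{-r/2},
\]
which dominates $(1+r^2)^{-1}e^{-r/2}$ up to a constant for $r\ge r_1$. On the compact annulus $R_0\le r\le r_1$, any defect can be absorbed by adding a large multiple of $K_0$, or equivalently by working on $r\ge r_1$ and treating $\{r=r_1\}$ as an inner boundary. Since $h$ is bounded, $\psi\le C' K_0(r/2)$, so the final bound has the required form.

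For the comparison, choose $B \gtrsim M$ and then $A$ large enough that $w\ge |v|$ on the inner circle $\partial B_{r_1}$. This uses $\sup_{\partial B_{r_1}}|v|\le \|e^{x_3/2}u\|_\infty$ and $\min_{\partial B_{r_1}}K_0>0$. Then $w\mp v$ satisfies $(\Delta-\tfrac14)(w\mp v)\le0$, is nonnegative on the inner boundary, and tends to $0$ at infinity. The exterior maximum principle therefore gives $|v|\le w\le C K_0(|x|/2)$. On $B_{r_1}\setminus B_{R_0}$ the bound is trivial, since $v$ is bounded there and $K_0(r/2)$ is bounded below. Undoing the conjugation yields \eqref{eq:Lu_f_bound}, with $C$ depending only on $R_0$, $\|e^{x_3/2}u\|_\infty$ and $M$.

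The main obstacle is verifying the supersolution inequality for $\psi$ with honest constants. The principal terms cancel exactly, so the sign of $(\Delta-\tfrac14)\psi$ is decided at the subleading order $r^{-3/2}$ in $h'$ together with the $\tfrac14 r^{-2}$ correction coming from the $r^{-1/2}$ prefactor. I would compute $(\Delta-\tfrac14)\bigl(r^{-1/2}e^{-r/2}h\bigr)$ exactly in radial coordinates, using $\Delta = \partial_r^2 + r^{-1}\partial_r$. If the sign is delicate, the fallback is to define $\psi$ by variation of parameters in the radial ODE, using $K_0$ and $I_0$ and the explicit integrability of $(1+r^2)^{-1}e^{-r/2}\cdot I_0(r/2)\cdot r$ against $K_0$. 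That route gives a radial solution of $(\Delta-\tfrac14)\psi = -(1+r^2)^{-1}e^{-r/2}$ that is $O(K_0)$, because
\[
\int^\infty (1+s^2)^{-1} e^{-s/2} I_0(s/2)\, s\,ds<\infty,
\]
the integrand being $\sim s^{-3/2}$.
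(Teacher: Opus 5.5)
Your proposal is correct and is essentially the paper's proof. After conjugating to $\Delta-\tfrac14$, your correction term $\psi=(2r^{-1/2}-r^{-1})e^{-r/2}$ is exactly the paper's barrier $\alpha\bigl(\sqrt{R_0}\,r^{-1/2}-r^{-1}\bigr)e^{-x_3/2-r/2}$ (conjugated, with $\sqrt{R_0}$ replaced by $2$), and the comparison on exhausting annuli using $e^{x_3/2}u\to0$ is the same. The only difference is how the inner boundary data is matched: you add $A\,K_0(r/2)$, whereas the paper simply rescales its single barrier, using that $\sqrt{R_0/r}-1/r>0$ at $r=R_0\ge2$.
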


\begin{proof}
 We may WLOG assume that $R_0\ge 2$. The proof follows by taking a suitable supersolution, e.g. if we denote $M=\|(1+|x|^2)e^{\frac{|x|}{2}+\frac{x_3}{2}}f\|_\infty$,
 and $|x| = |(x_2,x_3)|$
 \[
 w = \alpha\left(\sqrt{\tfrac{R_0}{|x|}} - \tfrac{1}{|x|}\right)e^{- \frac{x_3}{2}-\frac{|x|}{2}},\quad \alpha = \max\left[4M,\, \frac{R_0e^{R_0/2}}{R_0-1}\sup_{\partial B_{R_0}(0)}\left|e^{\frac{x_3}{2}}u\right| \right].
 \]
When $R_0 \geq 2$, this $w = w(x_2,x_3)$ has the properties that $w > 0$, that $w \geq |u|$ on $\partial B_{R_0}(0)$. We calculate the effect of applying $L$ to it:
\[
Lw = -\alpha\left(\tfrac{1}{|x|^3} + \tfrac{1}{2|x|^2} - \tfrac{\sqrt{R_0/|x|}}{4|x|^2}\right)e^{- \frac{x_3}{2}-\frac{|x|}{2}}
\leq -\tfrac{M}{1+|x|^2}e^{- \frac{x_3}{2}-\frac{|x|}{2}},
\]
where we used that $|x| \geq R_0$ also means that $\sqrt{R_0/|x|} \leq 1$.

Now, by $|f(x)| \le M(1+|x|^2)^{-1}e^{-\frac{x_3+|x|}{2}}$, using that $Lu = f$, we thus obtain:
\begin{equation}
L(w \pm u) \leq -M(1+|x|^2)^{-1}e^{- \frac{x_3}{2}-\frac{|x|}{2}} \pm f(x)\leq 0,\quad |x|\geq R_0\geq 2.
\end{equation}
Recalling the assumption $\lim_{|x|\to\infty} e^{\frac{x_3}{2}}u(x) = 0$, so that also $\lim_{|x|\to\infty} e^{\frac{x_3}{2}}(w\pm u)  = 0$, and using the conjugation to the Yukawa operator,
\begin{equation}
\left(\Delta - \tfrac{1}{4}\right)\left(e^{\frac{x_3}{2}}(w\pm u)\right) = e^{\frac{x_3}{2}}L(w\pm u)\leq 0,
\end{equation}
the maximum principle for $\Delta - \tfrac{1}{4}$ applied to the exhaustion of the noncompact domain by annuli now implies that $e^{\frac{x_3}{2}}(w\pm u) \geq 0$ everywhere, or in other words:
\[
|u|\leq w \leq \alpha \sqrt{\tfrac{R_0}{|x|}}e^{-\frac{x_3}{2}-\frac{|x|}{2}}\leq \alpha \sqrt{R_0}C_Ke^{-\frac{x_3}{2}}K_0(|x|/2), \quad\text{on}\quad\mathbb{R}^2\setminus B_{R_0}(0).
\]
Here we took $C_K \geq \left[\inf_{|x|\geq 2} (|x|^{1/2}e^{|x|/2}K_0(|x|/2))\right]^{-1}>0$, i.e. $C_K = 2$ suffices. Thus, \eqref{eq:Lu_f_bound} holds, if we let $$C = 2\sqrt{R_0}\max\left[4\|(1+|x|^2)e^{\frac{x_3}{2}+\frac{|x|}{2}}f\|_\infty,\, \tfrac{R_0e^{R_0/2}}{R_0-1}\|e^{\frac{x_3}{2}}u\|_\infty\right].$$
\end{proof}

Finally, we may now finish the proof of Theorem \ref{thm:optimal_decay}:

\begin{proof}[Proof of Theorem \ref{thm:optimal_decay}]
Letting $f = P(\nabla u, \operatorname{Hess} u)$, and then inserting the estimate
\eqref{eq:P_term_exp_bound} into Lemma \ref{lem:Lu_f_decay}, where we may WLOG assume that $u_\infty = 0$, we see that \eqref{eq:final_G_bound} holds outside a ball of radius $\max(4, 2R_0)$. Possibly increasing the constant, \eqref{eq:final_G_bound} thus holds on the whole exterior domain.

Now by \eqref{eq:final_G_bound}, using \eqref{eq:P_term_exp_bound} and the nonstandard elliptic estimate for $L$ in Lemma \ref{lem:varme_gradient_inhomogent} on capsules of radius 1, with the translator equation \eqref{TSE_split}, the gradient of $u$ also decays at the rate of the Green's function.
\end{proof}

\begin{rem}\label{rem:sharpness}
Regarding sharpness of the gradient bound in \eqref{eq:G_bound_grad}, note that one cannot expect general improvements to it in the form of a multiplier by inverse powers of distance on the right-hand-side $G_L$, seeing as for the Green's function $G_L$ itself, the bound in \eqref{eq:G_bound_grad} is saturated within that class. This is a subtle point, which at first sight might seem to contradict the phenomenon that the global uniform decay $O(|x|^{-1/2})$ of $G_L$ leads to the gradient $\nabla G_L$ decaying as $O(|x|^{-1})$, as a general fact that follows from Theorem \ref{thm:optimal_decay}, Lemma \ref{lem:varme_gradient_inhomogent} and \eqref{eq:P_term_exp_bound}.

That these two perspectives on gain (or lack of gain) of decay when taking derivatives are in fact consistent is a consequence of the anisotropy of the $G_L$ weights. Combining, an anisotropic estimate stronger than both of the above estimates holds:
\begin{equation}
|\nabla u(x)|\leq C\min\left(|x|^{-1},G_L(x)\right), \quad\dist(x,\Omega) > 1.
\end{equation}
\end{rem}

\begin{rem}\label{rem:ray_counterex}
 In \cite{GMM-UniquenessTangentPlanes}, we proved limits along all rays, valid for general collapsed translating solitons, in particular in the exterior problem \cite[Corollary 4.21]{GMM-UniquenessTangentPlanes}. However, note that such properties alone are generally not enough to establish limits at infinity in any strong sense. Namely, for any given $\varepsilon > 0$, the gradient of a smooth function $u_0$ (not required to solve any PDE) may decay as much as $\nabla u_0 = O(|x|^{-1+\varepsilon})$ with zero limits $\lim_{r\to\infty}u_0(r\boldsymbol{\omega})=0$ along all rays $\boldsymbol{\omega}\in\mathbb{S}^1$, and yet $\sup_{|x|=R}|u_0| = 1$ for $R\to\infty$.
 E.g. explicitly take $u_0(x)=(2\,x_2\,|x|\log|x|)/(|x|^{2}+x_2^{2}\log^{2}|x|)$. See also the counterexamples in \cite[Section 5]{GMM-UniquenessTangentPlanes}.
 
\end{rem}

\section{The case of planar wings}

Throughout this section, $\Sigma \subset \mathbb{R}^3$ denotes a complete, connected, embedded, collapsed translator with finite genus, finite entropy, and only
planar wings. We write
$$
\pi:\mathbb{R}^3 \longrightarrow \mathbb{R}^2,\qquad
\pi(x_1,x_2,x_3)=(x_2,x_3),
$$
and, for $R>0$,
$$
\mathcal{C}_R=\{(x_1,x_2,x_3)\in \mathbb{R}^3: x_2^2+x_3^2\leq R^2\},
$$ $$
A_R=\mathbb{R}^2\setminus \overline{B_R(0)}=\{(x_2,x_3)\;:\; |(x_2,x_3)|> R\}.
$$

The first point we make is differential topological. It will be used to say that once the normal can no longer become orthogonal to $\ee_1$, outside a large enough cylinder, properness (which will follow from completeness by the assumption of finite entropy) and embeddedness together rule out non-trivial multi-sheeted coverings over the exterior noncompact annulus:

\begin{prop}\label{prop:graphical_decomposition}
Let $\Sigma\subset\mathbb{R}^3$ be a connected, properly embedded surface without boundary, contained in a slab, i.e. for some $a>0$,
$$
\Sigma\subset\{|x_1|\leq a\}.
$$
Suppose that there exists $R_0>0$ such that
$$
\langle \nu,\ee_1\rangle \neq 0
$$
on $\Sigma\setminus \mathcal{C}_{R_0}$, where $\nu$ is a unit normal to $\Sigma$.
Then $\Sigma\setminus \mathcal{C}_{R_0}$ is a finite union of disjoint graphs
$$
\Gamma_i=\{(u_i(x_2,x_3),x_2,x_3):(x_2,x_3)\in A_{R_0}\},
$$
where each $u_i:A_{R_0}\to\mathbb{R}$ is smooth.
\end{prop}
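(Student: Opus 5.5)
Consider the restricted projection
$$
\pi|_{\Sigma'}:\Sigma':=\Sigma\setminus\mathcal{C}_{R_0}\longrightarrow A_{R_0},
$$
and show that it is a finite-sheeted covering map whose sheets are trivial. The components of $\Sigma'$ over $A_{R_0}$ are then exactly the graphs $\Gamma_i$. (Here $\Sigma'$ means the part of $\Sigma$ lying over $A_{R_0}$; if the boundary circle needs care, shrink $R_0$ slightly, or work over $A_{R_0}$ with $R_0$ replaced by any $R>R_0$ and take the union.)

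\textbf{Step 1: local diffeomorphism.} The differential $d\pi$ restricted to $T_p\Sigma$ is an isomorphism onto $\mathbb{R}^2$ exactly when $\ee_1\notin T_p\Sigma$, i.e. when $\langle\nu,\ee_1\rangle\neq0$. By hypothesis this holds on $\Sigma'$, so $\pi|_{\Sigma'}$ is a local diffeomorphism.

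\textbf{Step 2: properness.} Let $K\subset A_{R_0}$ be compact. Then
$$
(\pi|_{\Sigma'})^{-1}(K)\subset\Sigma\cap\bigl([-a,a]\times K\bigr).
$$
Since $\Sigma$ lies in the slab, the right-hand set is the intersection of $\Sigma$ with a compact set. Properness of the embedding makes it compact, and it is closed in $\Sigma'$. A proper local diffeomorphism between manifolds is a finite-sheeted covering map on each component (Ehresmann). Moreover the fibre cardinality is locally constant on the connected base $A_{R_0}$, hence constant, say equal to $N$. Finiteness of $N$ comes from compactness of each fibre together with discreteness.

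\textbf{Step 3: sheets are graphs.} This is the main point. The base $A_{R_0}$ is an annulus with $\pi_1=\mathbb{Z}$, so a connected nontrivial covering would be a $k$-fold cover, $k\geq2$, winding around the annulus. To rule this out, use embeddedness. Over each point $y\in A_{R_0}$ the fibre is a set of $N$ distinct values of $x_1$ in $[-a,a]$, and these can be ordered $x_1^{(1)}(y)<\dots<x_1^{(N)}(y)$. Each $x_1^{(j)}$ is continuous, indeed smooth, because near $y$ the sheets are smooth local graphs that never intersect, since $\Sigma$ is embedded. Hence the ordering is preserved under continuation. So $u_j(y):=x_1^{(j)}(y)$ defines $N$ smooth global functions on $A_{R_0}$ whose graphs are pairwise disjoint and together exhaust $\Sigma'$.

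Equivalently, monodromy around the generator of $\pi_1$ must preserve the linear order of the fibre. An order-preserving permutation of a finite totally ordered set is the identity, so every component of the covering is a single sheet. Disjointness follows from $u_1<\dots<u_N$, and there are finitely many graphs.

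The expected obstacle is only making Step 3 rigorous: continuity of the ordered selection. This follows from the local graph description of Step 1 plus the fact that distinct sheets over a neighbourhood cannot cross, since a crossing point would be a point of $\Sigma$ with two tangent planes, contradicting embeddedness.
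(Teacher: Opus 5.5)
Your proposal is correct and follows essentially the same route as the paper: a proper local diffeomorphism (from $\langle\nu,\ee_1\rangle\neq0$, the slab bound and properness) gives a finite covering of $A_{R_0}$, and ordering the fibres by $x_1$, using that distinct local sheets cannot cross, produces globally defined ordered graphs. The paper runs this on each connected component $E$, gets $n=1$ from connectedness of $E$, and counts components via one fibre, whereas you order all of $\Sigma\setminus\mathcal{C}_{R_0}$ at once; the two are equivalent. One small slip: your parenthetical about shrinking $R_0$ is unnecessary, since $\mathcal{C}_{R_0}$ is closed and so $\Sigma\setminus\mathcal{C}_{R_0}$ is open in $\Sigma$ with no boundary circle, and shrinking $R_0$ would in fact leave the region where the hypothesis is assumed.
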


\begin{proof}
Let $E$ be a connected component of $\Sigma\setminus \mathcal{C}_{R_0}$. We will first show that $\pi|_E$ is a proper local diffeomorphism onto $A_{R_0}$.

At each point $p\in E$, the kernel of $d\pi$ is the line spanned by $\ee_1$.
Therefore $d\pi|_{T_p\Sigma}$ fails to be injective if and only if
$\ee_1\in T_p\Sigma$, equivalently if and only if
$$
\langle \nu(p),\ee_1\rangle=0.
$$
This is excluded by hypothesis. Hence $\pi|_E$ is a local diffeomorphism.

We next prove properness. Let $K\subset A_{R_0}$ be compact. Since $\Sigma$ is
contained in the slab $\{|x_1|\leq a\}$,
$$
(\pi|_E)^{-1}(K)\subset \Sigma\cap([-a,a]\times K).
$$
The set $[-a,a]\times K$ is compact in $\mathbb{R}^3$, and $\Sigma$ is proper.
Thus $\Sigma\cap([-a,a]\times K)$ is compact. Since $E$ is a connected component of $\Sigma\setminus\mathcal{C}_{R_0}$, it is closed in $\Sigma\setminus\mathcal{C}_{R_0}$, which contains $\Sigma\cap([-a,a]\times K)$ because $K\subset A_{R_0}$. Hence $(\pi|_E)^{-1}(K)=E\cap([-a,a]\times K)$ is a closed subset of a compact set, and so is compact.
Hence $\pi|_E$ is proper.

Consequently $\pi|_E:E\to A_{R_0}$ is a proper local diffeomorphism. Its image is
open. It is also closed in $A_{R_0}$, by properness. Since $A_{R_0}$ is connected,
the image is all of $A_{R_0}$. Thus $\pi|_E:E\to A_{R_0}$ is a covering map.

We now prove that this covering has only one sheet. Since $\pi|_E$ is proper, each
fiber is compact, and since $\pi|_E$ is a local diffeomorphism, each fiber is discrete.
Thus every fiber is finite, and the number of points in the fiber is constant on
$A_{R_0}$. We call this number $n$.

For $q=(x_2,x_3)\in A_{R_0}$, the points of $\pi^{-1}(q)\cap E$ lie on the line
parallel to $\ee_1$ through $q$. We may therefore order them by their $x_1$-coordinate:
$$
\pi^{-1}(q)\cap E
=
\{(u_1(q),x_2,x_3),\ldots,(u_n(q),x_2,x_3)\},
$$
with
$$
u_1(q)<u_2(q)<\cdots<u_n(q).
$$
Because $\pi|_E$ is a covering map, the sheets are locally smooth graphs. Since
$\Sigma$ is embedded, two such local sheets cannot cross; hence their order in the
$x_1$-direction cannot change. Therefore $u_1,\ldots,u_n$ are globally defined
smooth functions on $A_{R_0}$.

It follows that
$$
E=\Gamma_1\cup\cdots\cup\Gamma_n,
$$
where
$$
\Gamma_j=\{(u_j(x_2,x_3),x_2,x_3):(x_2,x_3)\in A_{R_0}\}.
$$
The sets $\Gamma_j$ are pairwise disjoint, nonempty, and both open and closed in
$E$. Since $E$ is connected, we must have $n=1$. Hence each connected component
$E$ is a single graph over $A_{R_0}$.

Finally, there are only finitely many such components. Fix $q_0\in A_{R_0}$. Each
component of $\Sigma\setminus \mathcal{C}_{R_0}$ meets the line $\pi^{-1}(q_0)$ in
exactly one point. Thus the number of components equals the cardinality of
$$
\Sigma\cap\pi^{-1}(q_0).
$$
This set is compact, by properness and the slab bound, and discrete, by the local
diffeomorphism property. Hence it is finite.
\end{proof}

We now apply this proposition to the present translator. By the asymptotic
description of collapsed translators with finite genus and finite entropy \cite[Sections 6 and 7]{GMM-UniquenessTangentPlanes}, and using
that all wings are planar, there exists $R_0>0$ such that
$$
\langle \nu,\ee_1\rangle\neq 0
$$
on $\Sigma\setminus\mathcal{C}_{R_0}$. Hence, Proposition
\ref{prop:graphical_decomposition} gives a finite decomposition
$$
\Sigma\setminus\mathcal{C}_{R_0}
=
E_1\cup\cdots\cup E_N,
$$
where each end is a smooth horizontal graph
$$
E_j=\{(u_j(x_2,x_3),x_2,x_3):(x_2,x_3)\in A_{R_0}\}.
$$
Since $\Sigma$ is contained in a slab, each $u_j$ is globally bounded. Moreover, each $u_j$
satisfies the horizontal graphical translator equation \eqref{TSE}.

Thus, Theorem \ref{thm:optimal_decay} and the subsequent gradient estimate
\eqref{eq:G_bound_grad} apply to every graphical end $E_j$.

We shall need the following consequence.

\begin{lem}\label{lem:graph_flux_decay}
Let $u:A_{R_0}\to\mathbb{R}$ be a bounded solution of the horizontal graphical 
translator equation \eqref{TSE}, and let
$$
X(x_2,x_3)=(u(x_2,x_3),x_2,x_3)
$$
be its graph parametrization. For $R>R_0$, set
$$
S_R=\{(x_2,x_3):x_2^2+x_3^2=R^2\}.
$$
Let $\eta$ be the outward unit conormal to $X(S_R)$ as a curve in the graph. Then
$$
\left|\int_{X(S_R)} \langle \eta,\ee_3\rangle\,ds_\Sigma\right|
\leq C R^{-1/2},\quad\mathrm{as}\quad R\to\infty.
$$
\end{lem}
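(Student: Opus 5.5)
We compute the conormal flux explicitly in the graph parametrization and then insert the sharp gradient bound \eqref{eq:G_bound_grad} from Theorem~\ref{thm:optimal_decay}.

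Parametrize $S_R$ by arclength $\gamma(s)=R(\cos(s/R),\sin(s/R))$ in the $(x_2,x_3)$-plane, with outward normal $n=\gamma/R$ and tangent $t=\gamma'$. The curve $X\circ\gamma$ has tangent $T=(\partial_t u,t)$, where we write vectors in $\mathbb{R}^3$ as $(x_1\text{-component},(x_2,x_3)\text{-components})$ and $\partial_t u=\nabla u\cdot t$. The unit normal of the graph is $\nu=(1,-\nabla u)/W$. The outward conormal is $\eta=\pm(\nu\times T)/|\nu\times T|$, and $|\nu\times T|=|T|$. The element of arclength on $X(S_R)$ is $ds_\Sigma=|T|\,ds$. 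Hence
$$
\langle\eta,\ee_3\rangle\,ds_\Sigma=\pm\langle \nu\times T,\ee_3\rangle\,ds .
$$
A direct computation of the third component of $\nu\times T$ gives
$$
\langle \nu\times T,\ee_3\rangle=\frac{t_2+\partial_t u\,\partial_2 u}{W}.
$$
Since $t_2=-n_3$ (up to the orientation sign), this can be rewritten as
$$
\langle\eta,\ee_3\rangle\,ds_\Sigma=\Bigl(n_3+O(|\nabla u|^2)\Bigr)\frac{ds}{W}
\quad\text{or equivalently}\quad
\frac{n_3+\partial_n u\,\partial_3 u}{W}\,ds .
$$
The equivalence follows from $n_3(1+|\nabla u|^2)=n_3+\partial_n u\,\partial_3u+\partial_t u\,(\ldots)$ by linear algebra. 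One should double-check the exact form against the standard formula for a graph, $\langle\eta,\ee_3\rangle ds_\Sigma = \bigl(n_3 - \tfrac{\partial_n u\,\partial_3 u}{W^2}\bigr)W\,ds$ up to sign. In either form the integrand equals $n_3$ plus terms bounded by $C|\nabla u|^2+C|\nabla u|$ times bounded factors; more precisely, $W-1=O(|\nabla u|^2)$.

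The key observation is that $\int_{S_R} n_3\,ds=0$, since it is the integral of $\sin$ over a full circle. Therefore
$$
\Bigl|\int_{X(S_R)}\langle\eta,\ee_3\rangle ds_\Sigma\Bigr|\le C\int_{S_R}\bigl(|\nabla u|+|\nabla u|^2\bigr)\,ds .
$$
Without the cancellation of $n_3$ we would only get an $O(R)$ bound, so this is the crucial point. If the precise formula contains a term linear in $\nabla u$, it is harmless, since we only use $|\nabla u|$ and not its square.

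Now insert \eqref{eq:G_bound_grad}, which applies for $R$ large with $\Omega=B_{R_0}$: $|\nabla u|\le C G_L$, and $|\nabla u|$ is bounded. Using the asymptotics $K_0(r/2)\sim \sqrt{\pi/r}\,e^{-r/2}$, we get on $S_R$
$$
|\nabla u|\le C R^{-1/2}\exp\bigl(-\tfrac{R(1+\sin\theta)}{2}\bigr).
$$
Then
$$
\int_{S_R}|\nabla u|\,ds\le CR^{1/2}\int_0^{2\pi}e^{-R(1+\sin\theta)/2}\,d\theta .
$$
Laplace's method near $\theta=-\pi/2$, where $1+\sin\theta\approx(\theta+\pi/2)^2/2$, shows that the angular integral is $O(R^{-1/2})$. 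This yields a total bound of $O(1)$, which is not sufficient.

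This is the main obstacle: the plain gradient bound only gives $O(1)$. To improve it, we would use the anisotropic combined bound from Remark~\ref{rem:sharpness}, $|\nabla u|\le C\min(|x|^{-1},G_L)$. Near the bottom direction we also need the structure of the integrand. Expanding in $\nabla u$, the linear term in the flux integrand is $-\partial_t u\cdot t_3$-type, namely a tangential derivative, which integrates to zero over the closed circle. For example, $\int_{S_R}\partial_t u\, n_2\,ds=\int_{S_R}u\,\kappa\, n_3\,ds$ after integration by parts, and this can be bounded using $|u-u_\infty|\le CG_L$ together with the cancellation $\int n_3=0$ for the constant $u_\infty$. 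Its contribution is then $R^{-1}\cdot O(1)$. The remaining quadratic terms give
$$
\int_{S_R}|\nabla u|^2\,ds\le C\int_{S_R}\min(R^{-2},G_L^2)\,ds .
$$
Again by Laplace's method, this is $\le C R\cdot R^{-1}\cdot R^{-1/2}$-type, comfortably $O(R^{-1/2})$. Assembling these estimates yields the claimed bound $CR^{-1/2}$.
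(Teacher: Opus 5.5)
Your final estimate is right, and the skeleton is the paper's: the exact conormal formula, the cancellation $\int_{S_R}n_3\,ds=0$, a remainder quadratic in $\nabla u$, and $\int_{S_R}G_L^2\,ds=O(R^{-1/2})$. The paper evaluates $\int_0^{2\pi}e^{-R\sin\theta}\,d\theta=2\pi I_0(R)$ and uses the asymptotics of $I_0$ and $K_0$; your Laplace-method computation near $\theta=-\pi/2$ is equivalent. However, the ``main obstacle'' you identify does not exist, and the hedging around it introduces false statements.

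Your own cross-product computation gives, with your counterclockwise $t=\gamma'=(-n_3,n_2)$ and the outward conormal,
\[
\langle\eta,\ee_3\rangle\,ds_\Sigma=\frac{n_3-\partial_t u\,\partial_2 u}{W}\,ds .
\]
Writing $\nabla u=\partial_n u\,n+\partial_t u\,t$, one checks that $n_3|\nabla u|^2-\partial_n u\,\partial_3 u=-\partial_t u\,\partial_2 u$. So this is exactly the paper's $W n_3-\frac{\partial_n u\,\partial_3 u}{W}$. Your alternative ``equivalent'' form $\frac{n_3+\partial_n u\,\partial_3 u}{W}$ is not equivalent: it drops the term $-\partial_t u\,\partial_2 u/W$. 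In particular, the integrand differs from $n_3$ by $n_3(W^{-1}-1)-\partial_t u\,\partial_2 u/W$, which is bounded by $\tfrac32|\nabla u|^2$. There is no term linear in $\nabla u$ at all.

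You should therefore commit to this formula rather than add a $C|\nabla u|$ term ``to be safe''. Contrary to what you assert, that addition is not harmless. Indeed $\int_{S_R}G_L\,ds=RK_0(R/2)I_0(R/2)\to 1$, so a genuine linear term would only give $O(1)$, exactly as your Laplace computation shows.

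Your rescue is built on a misdescription of the integrand and is unnecessary. It has two parts: integrating by parts a linear ``$\partial_t u\cdot t_3$-type'' term, and invoking $|\nabla u|\le C\min(|x|^{-1},G_L)$ from Remark~\ref{rem:sharpness}. The integration by parts would only work for a linear term of that specific tangential form, and no linear term is present. The $\min$ bound is superfluous, since $\int_{S_R}G_L^2\,ds=O(R^{-1/2})$ on its own.

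With the linear term correctly identified as zero, the whole proof is
\[
\Bigl|\int_{X(S_R)}\langle\eta,\ee_3\rangle\,ds_\Sigma\Bigr|\le\tfrac32\int_{S_R}|\nabla u|^2\,ds\le C\int_{S_R}G_L^2\,ds=O(R^{-1/2}),
\]
which is precisely the paper's argument.
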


\begin{proof}
Write
$$
W=\sqrt{1+|\nabla u|^2},
\qquad
(\mu_2,\mu_3)=\mu=\frac{(x_2,x_3)}{R}
$$
for the Euclidean outward unit normal to $S_R$ in the parameter domain, and denote
$$
u_\mu=\langle \nabla u,\mu\rangle,\quad u_3 = \langle \nabla u,\ee_3\rangle.
$$
A direct computation for horizontal graphs gives
$$
\int_{X(S_R)} \langle \eta,\ee_3\rangle\,ds_\Sigma
=
\int_{S_R}
\left(
W\mu_3-\frac{u_\mu u_3}{W}
\right)\,ds_0,
$$
where $ds_0$ denotes Euclidean arc length in the $(x_2,x_3)$-plane.

The next couple of steps are similar to the argument in \cite{Lynch-Tinaglia,Neves-Tian}. Since
$$
\int_{S_R}\mu_3\,ds_0=0,
$$
we may subtract the leading term and obtain
$$
\int_{X(S_R)} \langle \eta,\ee_3\rangle\,ds_\Sigma
=
\int_{S_R}
\left(
(W-1)\mu_3-\frac{u_\mu u_3}{W}
\right)\,ds_0.
$$
Using that on $S_R$,
\begin{equation}\label{eq:W_diff}
0\leq W-1 \leq \tfrac{1}{2}|\nabla u|^2,
\qquad
\left|\frac{u_\mu u_3}{W}\right|\leq |\nabla u|^2,
\end{equation}
we get
$$
\left|\int_{X(S_R)} \langle \eta,\ee_3\rangle\,ds_\Sigma\right|
\leq
\frac{3}{2}\int_{S_R}|\nabla u|^2\,ds_0.
$$

By the a priori gradient estimate \eqref{eq:G_bound_grad},
$$
|\nabla u(x)|\leq C G_L(x),
\qquad
G_L(x)=\frac{1}{2\pi}e^{-x_3/2}K_0(|x|/2).
$$
On $S_R$, write $x_3=R\sin\theta$. Then
$$
|\nabla u|^2
\leq
C e^{-R\sin\theta}K_0(R/2)^2.
$$
Therefore
$$
\int_{S_R}|\nabla u|^2\,ds_0
\leq
C R K_0(R/2)^2\int_0^{2\pi}e^{-R\sin\theta}\,d\theta.
$$
The last integral is equal to $2\pi I_0(R)$, where $I_0$ is
the modified Bessel function of the first kind. Since
$$
K_0(R/2) = O\left(\sqrt{\frac{\pi}{R}}e^{-R/2}\right),
\qquad
I_0(R)=O\left(\frac{e^R}{\sqrt{2\pi R}}\right),
$$
we conclude that
$$
\int_{S_R}|\nabla u|^2\,ds_0
\leq
C R K_0(R/2)^2 I_0(R)=O(R^{-1/2}),
$$
which proves the desired estimate.
\end{proof}

We finally recall the flux identity of Neves--Tian.

\begin{lem}[\cite{Neves-Tian}]\label{lem:neves_tian}
Let $\Sigma$ be a compact translator with smooth boundary. Then
$$
\int_\Sigma |\vec H|^2\,dA
=
\int_{\partial \Sigma}\langle \eta,\ee_3\rangle\,d\sigma,
$$
where $\eta$ denotes the outward unit conormal to $\partial \Sigma$ on $\Sigma$.
\end{lem}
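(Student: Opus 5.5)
The identity follows from the first variation formula applied to a suitable tangential vector field. I will apply the divergence theorem on $\Sigma$ to the tangential projection $\ee_3^\top$ of the constant vector field $\ee_3$.

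For any constant vector $V$ on a surface $\Sigma\subset\mathbb{R}^3$, decompose $V = V^\top + V^\perp$. Since $\operatorname{div}_\Sigma V = 0$ for constant $V$, we get $\operatorname{div}_\Sigma V^\top = -\operatorname{div}_\Sigma V^\perp$. For a normal field, $\operatorname{div}_\Sigma V^\perp = -\langle V^\perp, \vec H\rangle$, with the sign convention $\vec H = -H\nu$ and $\operatorname{div}_\Sigma \nu = H$, for which the translator equation reads $\vec H = \ee_3^\perp$. Hence
\[
\operatorname{div}_\Sigma \ee_3^\top = \langle \ee_3^\perp, \vec H\rangle .
\]
By the translator equation \eqref{TSE_geom}, $\ee_3^\perp = \vec H$, so $\operatorname{div}_\Sigma \ee_3^\top = |\vec H|^2$.

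Next I integrate over the compact surface $\Sigma$ with smooth boundary and apply the divergence theorem for tangential fields:
\[
\int_\Sigma |\vec H|^2\,dA = \int_{\partial\Sigma} \langle \ee_3^\top, \eta\rangle\, d\sigma .
\]
Since $\eta$ is tangent to $\Sigma$, we have $\langle \ee_3^\top,\eta\rangle = \langle \ee_3,\eta\rangle$, which gives the claim.

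The only point needing care is the sign convention relating $\vec H$ to $\operatorname{div}_\Sigma$ of normal fields. I will fix it by checking the standard identity $\Delta_\Sigma x = \vec H$, which yields $\operatorname{div}_\Sigma X^\perp = -\langle X,\vec H\rangle$ for any ambient vector field $X$. With this convention the result is the nonnegative quantity $|\vec H|^2$, consistent with the statement. Everything else is routine.
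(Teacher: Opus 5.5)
Your proof is correct and follows the paper's own derivation: in the vertical-flux and Pohozaev sections the paper also writes $\operatorname{div}_\Sigma(\ee_3^\top)=\Delta_\Sigma x_3=\langle \ee_3,\vec H\rangle=|\vec H|^2$, which is the Minkowski identity for the Killing field $X=\ee_3$, and then applies the divergence theorem on the compact domain. Your sign check via $\Delta_\Sigma x=\vec H$ is the right way to fix the convention for $\vec H=\ee_3^\perp$.
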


We can now restate and prove Theorem (I) from the introduction:

\begin{thm}\label{thm:planar_wings}
Let $\Sigma$ be a complete, connected, embedded, collapsed translator with finite genus, finite entropy, and only planar wings. Then $\Sigma$ is a vertical plane.
\end{thm}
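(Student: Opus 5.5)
The plan is to apply the Neves--Tian identity of Lemma~\ref{lem:neves_tian} to an exhaustion of $\Sigma$ by compact truncations whose boundaries lie on the graphical ends, and to use Lemma~\ref{lem:graph_flux_decay} to show that the boundary flux vanishes in the limit. First, Proposition~\ref{prop:graphical_decomposition}, together with the discussion following it, provides $R_0$ such that $\Sigma\setminus\mathcal{C}_{R_0}=E_1\cup\cdots\cup E_N$. Here each $E_j$ is the graph of a bounded solution $u_j:A_{R_0}\to\mathbb{R}$ of \eqref{TSE}. Finite entropy gives properness by \cite{Chini}, and the slab bound gives boundedness of the $u_j$.

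For $R>R_0$, set $M_R:=\Sigma\cap\mathcal{C}_R$. Properness and the slab bound make $M_R$ compact, since it lies in $[-a,a]\times\overline{B_R(0)}$. Its boundary is $\partial M_R=\bigcup_{j=1}^N X_j(S_R)$, which is smooth because each $E_j$ is a graph over $A_{R_0}$ and $S_R\subset A_{R_0}$. So $\partial M_R$ is a disjoint union of $N$ smooth closed curves, transverse to $\partial\mathcal{C}_R$ by graphicality. The outward conormal of $M_R$ along $X_j(S_R)$ is exactly the outward conormal $\eta$ appearing in Lemma~\ref{lem:graph_flux_decay}, since it points toward increasing $|(x_2,x_3)|$.

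Applying Lemma~\ref{lem:neves_tian} to $M_R$ and then Lemma~\ref{lem:graph_flux_decay} to each end gives
\[
\int_{M_R}|\vec H|^2\,dA=\sum_{j=1}^N\int_{X_j(S_R)}\langle\eta,\ee_3\rangle\,ds_\Sigma\le NCR^{-1/2}.
\]
The left side is nondecreasing in $R$, since $M_R$ increases with $R$ and the integrand is nonnegative. Letting $R\to\infty$ therefore yields $\int_\Sigma|\vec H|^2\,dA=0$, so $\vec H\equiv0$.

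By \eqref{TSE_geom}, this gives $\ee_3^\perp\equiv0$, so $\ee_3$ is tangent to $\Sigma$ everywhere. Since $\vec H=0$, $\Sigma$ is also minimal. A connected minimal surface invariant under the vertical flow is a vertical cylinder $\gamma\times\mathbb{R}$ over a curve $\gamma$. Minimality of the cylinder forces $\gamma$ to have zero curvature, so $\gamma$ is a straight line, by connectedness and completeness. Hence $\Sigma$ is a vertical plane, contained in the slab and therefore parallel to it.

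The main obstacle is not in this assembly, which is routine once the earlier results are granted. The substantive inputs are already packaged in those results: the existence of $R_0$ with $\langle\nu,\ee_1\rangle\ne0$ outside $\mathcal{C}_{R_0}$, which comes from the planar-wing structure in \cite{GMM-UniquenessTangentPlanes}, and the decay estimate of Theorem~\ref{thm:optimal_decay}. The only point needing care here is checking that $M_R$ is a compact surface with smooth boundary for every $R>R_0$, so that Lemma~\ref{lem:neves_tian} applies. This follows from the graphical description of the ends, as argued above.
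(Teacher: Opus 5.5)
Your proposal is correct and follows the paper's proof essentially verbatim: truncate by the cylinders $\mathcal{C}_R$, apply the Neves--Tian identity (Lemma~\ref{lem:neves_tian}) to $M_R=\Sigma\cap\mathcal{C}_R$, bound each end's boundary flux by Lemma~\ref{lem:graph_flux_decay}, let $R\to\infty$ to get $\vec H\equiv 0$, and conclude that $\Sigma$ is a vertical plane. Your extra checks fill in details the paper leaves implicit: compactness and smooth boundary of $M_R$, the orientation of the conormal, and the vertical-cylinder-over-a-line argument at the end.
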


\begin{proof}
Choose $R_0$ as above, so that
$$
\Sigma\setminus\mathcal{C}_{R_0}
=
E_1\cup\cdots\cup E_N
$$
is a finite union of horizontal graphical ends. For $R>R_0$, define the compact
region
$$
M_R=\Sigma\cap\mathcal{C}_R.
$$
Its boundary is the disjoint union
$$
\partial M_R=\Gamma_1(R)\cup\cdots\cup\Gamma_N(R),
$$
where
$$
\Gamma_j(R)=E_j\cap \partial\mathcal{C}_R.
$$

Applying Lemma \ref{lem:neves_tian} to $M_R$, we obtain
$$
\int_{M_R}|\vec H|^2\,dA
=
\sum_{j=1}^N
\int_{\Gamma_j(R)}\langle \eta,\ee_3\rangle\,d\sigma.
$$
Each $\Gamma_j(R)$ is the lift of the circle $S_R$ by the graph $u_j$. Hence Lemma
\ref{lem:graph_flux_decay} gives
$$
\left|
\int_{\Gamma_j(R)}\langle \eta,\ee_3\rangle\,d\sigma
\right|
\leq C_jR^{-1/2}.
$$
Therefore
$$
\int_{M_R}|\vec H|^2\,dA
\leq
CR^{-1/2}.
$$
Letting $R\to\infty$, we obtain
$$
\int_\Sigma |\vec H|^2\,dA=0.
$$
Thus $\vec H\equiv 0$ on $\Sigma$.

Since $\Sigma$ is a translator, $\vec H=\ee_3^\perp$. Hence $\ee_3^\perp\equiv 0$,
so $\ee_3$ is tangent to $\Sigma$ everywhere. Consequently, it is also minimal, and thus a vertical plane.
\end{proof}

\begin{rem}\label{rem:Neves-Tian}
We note that the previous proof carries over to a broader class of immersed translators of finite entropy and finite genus. Indeed, the only additional requirement is that the ends of the translator are collapsed graphs, in the sense of the definition below. In particular, this implies that one cannot construct Jorge–Meeks surfaces in the translating setting whose ends are graphically collapsed. 

Notice that if $\Sigma$ is an immersed surface with finite entropy, finite genus and graphically collapsed ends, then it is not necessarily collapsed, in the sense that $\Sigma$ is not necessarily contained in a vertical slab of $\R^3$.
However, the hypothesis about finite entropy is crucial. Namely, Hoffman-Mart\'{\i}n-White \cite{HMW-1} constructed a family of periodic, simply connected translators, called translating helicoids (see Figure \ref{fig:helicoids}), which have infinitely many planar wings, so that they have infinite entropy.
\end{rem}

\begin{figure}
    \centering
    \includegraphics[width=0.65\linewidth]{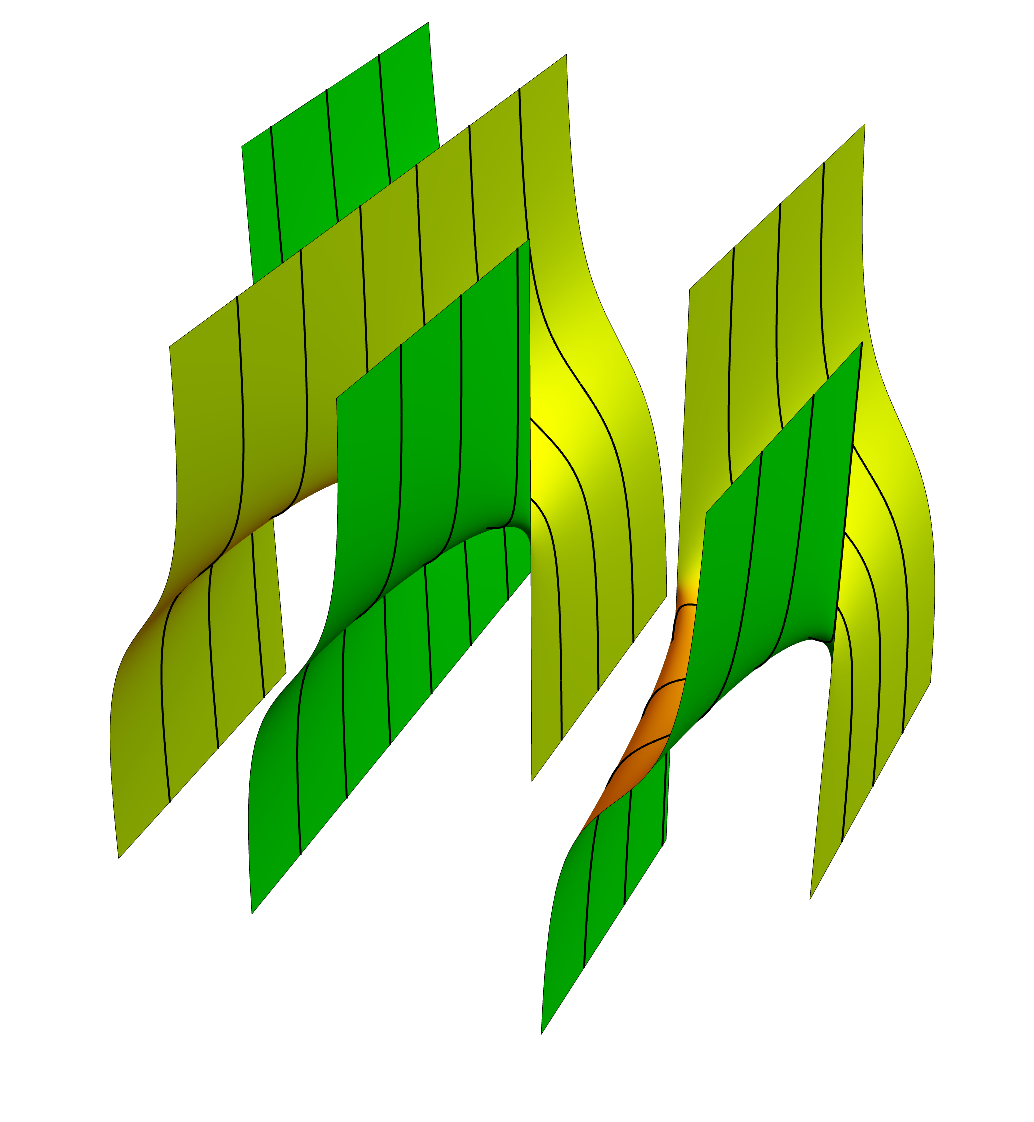}
    \caption{Translating helicoid of Hoffman-Mart\'{\i}n-White. See Remark \ref{rem:Neves-Tian}. The picture on the left is a fundamental piece, bounded by 2 vertical lines. The complete surface is produced by successive Schwarz reflections about these vertical lines.}
    \label{fig:helicoids}
\end{figure}

\begin{dfn}[Collapsed graphical ends]\label{dfn:collapsed_graphical_end}
    We say that an end $E$ of $\Sigma^2\subset\mathbb{R}^3$ is a horizontal collapsed graphical end, if there exists a horizontal unit vector $v\perp \ee_3$ and a smooth bounded function $\varphi:[v]^\perp\setminus\Omega\to\R,$ where $\Omega\subseteq[v]^\perp$ is a smooth bounded planar domain, such that
    \[
    E=\operatorname{Graph}_{v,\Omega}(\varphi):= \{\,p+\varphi(p) v \; : \; p\in[v]^\perp\setminus\Omega\,\}.
    \]
    In the rest of the paper we generally arrange by invariance of \eqref{TSE_geom} that $0\in\Omega$ and $v=\ee_1$, as in \eqref{eq:horiz_graph}.
\end{dfn}

\section{Translators of entropy two}\label{sec:entropy-two}

Let $\Sigma\subset\mathbb{R}^3$ be a complete, connected, collapsed, embedded translator, finite genus, and finite entropy. 

Collapsedness, also known as finite width, means that $$ \Sigma \subset (-w/2,w/2) \times \R^2, \quad \mbox{for $w=\width(\Sigma) > 0$ (infimum of such $w$)}$$
For $t>0$ sufficiently large,
denote by
$$
\omega_P^+(\Sigma),\qquad \omega_G^+(\Sigma)
$$
the numbers of planar and grim-reaper-type wings, respectively, in
$\{x_2>t\}$, and similarly by
$$
\omega_P^-(\Sigma),\qquad \omega_G^-(\Sigma)
$$
the corresponding numbers in $\{x_2<-t\}$.

Recall from \cite[Theorem 8.2]{GMM-AJM} that
\begin{equation}
\lambda(\Sigma)=
\omega_P^-(\Sigma)+2\omega_G^-(\Sigma) =
\omega_P^+(\Sigma)+2\omega_G^+(\Sigma).
\label{eq:entropy-left-right}
\end{equation}
Moreover, planar wings occur naturally in left--right pairs. More precisely, the
decomposition into downward graphical regions described in
\cite[Proposition 2.6]{GMM-UniquenessTangentPlanes} shows that every such region
contains precisely one left planar wing and one right planar wing. Consequently,
\begin{equation}
\omega_P^-(\Sigma)
= \omega_P^+(\Sigma)=
\frac{\omega_P(\Sigma)}{2}.
\label{eq:planar-pairing}
\end{equation}

In \cite{GMM-UniquenessTangentPlanes}, the authors also proved the following:
\begin{thm}[Theorem F, \cite{GMM-UniquenessTangentPlanes}]\label{thmCC}
    Let $\Sigma$ be  a complete, embedded, collapsed translator with finite genus and $\lambda(\Sigma)=2$. Assume that  the limit, as $t \to +\infty$, of $\Sigma+t \ee_3$ is the empty set. Then $\Sigma$ is either a (possibly tilted) grim reaper or a $\Delta$-wing. In particular, if the width of the slab containing $\Sigma$ is $\pi$, then $\Sigma$ is the standard grim reaper cylinder.
\end{thm}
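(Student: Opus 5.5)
By \eqref{eq:entropy-left-right} and \eqref{eq:planar-pairing}, the condition $\lambda(\Sigma)=2$ leaves only two possible wing configurations. Either $\omega_P^\pm=2$ and $\omega_G^\pm=0$, or $\omega_P^\pm=0$ and $\omega_G^\pm=1$. The plan is to show that the hypothesis on $\Sigma+t\ee_3$ rules out the first case, to prove in the second case that $\Sigma$ is an entire graph over a strip, and then to invoke the Hoffman--Ilmanen--Mart\'in--White classification \cite{HIMW}.

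\emph{Step 1: excluding planar wings.} A planar wing has $x_3$ unbounded in both directions along every slice curve. By the uniqueness of limiting planes at infinite times from \cite{GMM-UniquenessTangentPlanes}, the part of such a wing near $x_3\to-\infty$ converges under the upward translations $\Sigma+t\ee_3$ to a nonempty vertical plane. This contradicts the assumption that $\Sigma+t\ee_3\to\varnothing$. Alternatively, if all wings were planar, Theorem~\ref{thm:planar_wings} would force $\Sigma$ to be a plane, which has entropy $1\neq 2$. Either way $\omega_P=0$, so $\Sigma$ has exactly one grim-reaper-type wing on each side, $\{x_2>t\}$ and $\{x_2<-t\}$.

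\emph{Step 2: graphicality.} Grim reaper wings are, far out, graphs over the $(x_1,x_2)$-plane: they are the downward graphical regions of \cite[Proposition 2.6]{GMM-UniquenessTangentPlanes}. On them the function $H=\langle\nu,\ee_3\rangle$, for a consistent choice of normal, is strictly positive and decays in a controlled way. I would use that $H$ satisfies the Jacobi equation
\[
\Delta_\Sigma H+\langle\nabla_\Sigma H,\ee_3\rangle+|A|^2H=0 .
\]
The aim is to run a strong maximum principle argument, as in Spruck--Xiao and HIMW, to conclude $H>0$ on all of $\Sigma$.

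\emph{Main obstacle.} The maximum principle has to be applied on a noncompact surface, so I must control $H$ at infinity, including along the downward direction. I would first try to exploit the entropy bound: entropy $2$ together with the two wings means that a generic vertical line meets $\Sigma$ at most once far out. Finite genus, together with the compactness of $\Sigma\cap\mathcal{C}_R$ modulo the wings, should then let an Alexandrov/Rad\'o-type counting argument (moving vertical translates) exclude interior points where $H=0$. If that fails, I would apply a Serrin-type comparison to $H$ against a small multiple of the positive Jacobi field coming from a tilted grim reaper, on exhausting domains.

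\emph{Step 3: classification.} Once $H>0$, $\Sigma$ is a complete translating graph over a domain contained in a slab of width $w$, and since it is collapsed that domain is a strip. By \cite{HIMW}, $\Sigma$ is then a tilted grim reaper or a $\Delta$-wing. Both tilted grim reapers and $\Delta$-wings have width strictly greater than $\pi$, so width exactly $\pi$ forces the standard grim reaper cylinder.
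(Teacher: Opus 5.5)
The paper does not prove this statement; it imports it as Theorem~F of \cite{GMM-UniquenessTangentPlanes} and uses it as a black box in Theorem~\ref{th:entropy-two-classification}. Your proposal therefore has to stand on its own, and it does not: Step~2 has a genuine gap, and it is the whole content of the theorem.

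Your bookends are fine. Step~1 holds because the hypothesis $\Sigma+t\ee_3\to\varnothing$ excludes planar wings directly: a planar wing has slice curves at fixed $x_2=c$ inside the slab with $x_3\to-\infty$, so $\Sigma+t\ee_3$ keeps meeting a fixed compact set. Step~3 holds once $\langle\nu,\ee_3\rangle>0$ everywhere: the separation argument makes the properly embedded $\Sigma$ a graph, collapsedness forces a strip, and \cite{HIMW} gives a tilted grim reaper or a $\Delta$-wing. The widths $\pi/\cos\theta$ and $>\pi$ then settle the last sentence. Step~2, however, is only a list of three hedged strategies (``should'', ``if that fails''), and none works as stated.

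First, the Jacobi operator $\Delta_\Sigma+\langle\nabla_\Sigma\,\cdot\,,\ee_3\rangle+|A|^2$ has a nonnegative zeroth-order term. So positivity of $H$ on the boundary of an exhausting domain propagates inward only if that domain is stable for this operator. The strong maximum principle merely upgrades $H\ge 0$ to $H>0$ (or $H\equiv0$). Proving $H\ge0$ globally is exactly where a handle of $\Sigma$ must be ruled out, i.e.\ where finite genus rather than simple connectedness (assumed in \cite{ChiniThesis,Chini,GMM-AJM}) has to be handled. You give no mechanism for this.

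Second, your asymptotic input is missing. $\Sigma$ minus its wings is not compact: inside a vertical cylinder it extends to $x_3\to+\infty$, where, like the wings' sides, it is asymptotic to vertical planes and $H\to0$. You therefore need the \emph{sign} of $H$ there, not just decay, and the wing structure does not provide it.

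Concretely, the three routes fail as follows.
\begin{enumerate}
\item Entropy is a Gaussian-density bound and does not count intersections of $\Sigma$ with vertical lines. Even if such lines met $\Sigma$ once far out, that says nothing about lines through the core, where handles and sign changes of $H$ would sit.
\item A moving-translates argument needs $\Sigma\cap(\Sigma+t\ee_3)=\varnothing$ for large $t$, whereas $\Sigma+t\ee_3\to\varnothing$ is only local convergence on compact sets. It also needs to exclude a first contact at infinity as $t\searrow0$. Both require quantitative asymptotics you have not established.
\item A Jacobi field on a tilted grim reaper is not a Jacobi field or supersolution on $\Sigma$. A positive supersolution on $\Sigma$ itself, i.e.\ stability, is essentially as hard to obtain as the graphicality you want.
\end{enumerate}

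A small correction: the downward graphical regions of \cite[Proposition 2.6]{GMM-UniquenessTangentPlanes} carry the planar wings, one left and one right each, not the grim-reaper wings.
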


Now, using Theorem \ref{thm:planar_wings}, we can complete the classification of collapsed translators with finite genus and entropy two. The point is that the wing-counting formula \eqref{eq:entropy-left-right} leaves only two possibilities: either all the wings are planar, or all of them are of grim reaper
type. The first possibility is ruled out by Theorem~\ref{thm:planar_wings}, while the second one is resolved by Theorem \ref{thmCC}.

\begin{thm}
\label{th:entropy-two-classification}
Let $\Sigma\subset\mathbb{R}^3$ be a complete, connected, collapsed, embedded translating
soliton with finite genus. If the entropy satisfies
$$
\lambda(\Sigma)=2,
$$
then $\Sigma$ is either a possibly tilted grim reaper cylinder or a
$\Delta$-wing.

In particular, if
$$
\width(\Sigma)=\pi,
$$
then $\Sigma$ is the standard grim reaper cylinder.
\end{thm}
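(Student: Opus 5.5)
The plan is to count wings with the entropy formula \eqref{eq:entropy-left-right} and split into cases. Since $\lambda(\Sigma)=2$, on the right side we have $\omega_P^+(\Sigma)+2\omega_G^+(\Sigma)=2$. The only options are $(\omega_P^+,\omega_G^+)=(2,0)$ or $(0,1)$, and likewise on the left. By the pairing \eqref{eq:planar-pairing}, $\omega_P^-=\omega_P^+$, so both sides fall into the same case. In the first case every wing of $\Sigma$ is planar. Theorem~\ref{thm:planar_wings} then forces $\Sigma$ to be a vertical plane. But a vertical plane has entropy $1\neq 2$, which is a contradiction, so this case does not occur.

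In the second case $\Sigma$ has exactly one grim-reaper-type wing on each side and no planar wings. I want to invoke Theorem~\ref{thmCC}, so I must check its hypothesis that $\Sigma+t\ee_3\to\varnothing$ as $t\to+\infty$. Suppose the limit were a nonempty set. By the structure theory of \cite{GMM-UniquenessTangentPlanes} (uniqueness of limiting planes at infinite times), such a limit would be a union of vertical planes parallel to the slab. Each of these planes would then contribute planar wings: a plane appearing in the limit as $t\to+\infty$ corresponds to regions of $\Sigma$ going downward, that is, to downward graphical regions. By \cite[Proposition 2.6]{GMM-UniquenessTangentPlanes}, each such region contains a left and a right planar wing. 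That contradicts $\omega_P^\pm=0$. Hence the limit is empty, and Theorem~\ref{thmCC} gives that $\Sigma$ is a possibly tilted grim reaper cylinder or a $\Delta$-wing.

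For the width statement, Theorem~\ref{thmCC} already contains it. Alternatively, one can argue directly: tilted grim reapers and $\Delta$-wings of the relevant family have width strictly greater than $\pi$, and equality holds only for the standard grim reaper.

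The main obstacle is justifying the emptiness of the limit at $t\to+\infty$ from the absence of planar wings. This requires precisely matching the limiting vertical planes in \cite{GMM-UniquenessTangentPlanes} with planar wings via the downward graphical decomposition. I would first try to cite the correspondence, between the multiplicity of the limit planes and $\omega_P$, that is stated in the entropy formula of \cite[Theorem 8.2]{GMM-AJM}.
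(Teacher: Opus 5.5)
Your proposal is correct and follows the paper's proof essentially step for step: the same wing count from \eqref{eq:entropy-left-right} and \eqref{eq:planar-pairing}, the same exclusion of the all-planar case via Theorem~\ref{thm:planar_wings} and the entropy of a plane, and the same final appeal to Theorem~\ref{thmCC} (which also gives the width statement). The step you flag as the main obstacle is handled in the paper simply by citing \cite[Theorem A]{GMM-UniquenessTangentPlanes}, which says directly that having only grim-reaper-type wings is equivalent to $\Sigma+t\ee_3\to\varnothing$ as $t\to+\infty$; your sketched derivation through the downward graphical regions can be replaced by that citation.
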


\begin{proof}
Set
$$
p:=\omega_P^-(\Sigma)=\omega_P^+(\Sigma).
$$
Since $\lambda(\Sigma)=2$, equation \eqref{eq:entropy-left-right} gives
$$
2=p+2\omega_G^-(\Sigma)
  =p+2\omega_G^+(\Sigma).
$$
Thus $p$ is even and
$$
p\in\{0,2\}.
$$
Suppose first that $p=2$. Then
$$
\omega_G^-(\Sigma)=\omega_G^+(\Sigma)=0.
$$
By \eqref{eq:planar-pairing}, $\Sigma$ has four planar wings and no wings of
grim reaper type. As all the wings of $\Sigma$ are planar, Theorem~\ref{thm:planar_wings} then implies that $\Sigma$ is a vertical plane.
But a vertical plane has entropy one, contradicting the hypothesis
$\lambda(\Sigma)=2$.

Hence $p=0$. It follows that
$$
\omega_P(\Sigma)=0
$$
and
$$
\omega_G^-(\Sigma)=\omega_G^+(\Sigma)=1.
$$

Thus $\Sigma$ has exactly two wings, both of grim reaper type. Equivalently, by 
\cite[Theorem A]{GMM-UniquenessTangentPlanes},
$$
\Sigma+t\ee_3\longrightarrow\varnothing
\qquad\text{as }t\to+\infty.
$$

We may therefore apply Theorem \ref{thmCC} to conclude that $\Sigma$ is either a possibly tilted grim reaper cylinder or a $\Delta$-wing.

Finally, when $\width(\Sigma)=\pi$, the same theorem gives that $\Sigma$ is the standard grim reaper cylinder.
\end{proof}

\begin{rem}
Theorem~\ref{th:entropy-two-classification} removes the additional assumption
$$
\Sigma+t\ee_3\longrightarrow\varnothing
\qquad\text{as }t\to+\infty
$$
from Theorem \ref{thmCC} above. Indeed, that assumption was precisely what
excluded planar wings. Theorem~\ref{thm:planar_wings} shows that the only
finite-genus, finite-entropy collapsed translator whose wings are all of planar type as in Remark \ref{rem:planar_wings} is a vertical plane, and the entropy $\lambda = 2$ condition rules this out.

Thus there are no finite-genus ``higher-topology'' versions of the
grim reaper cylinder or of the $\Delta$-wing at entropy two.
\end{rem}

Combining this with the entropy quantization theorem gives the following convenient
low-entropy formulation, which disposes of the extra assumption of simple connectedness present in the previously best classification results for this entropy range in \cite{ChiniThesis}\cite[Corollary 2]{Chini}\cite[Theorem 10.1]{GMM-AJM}.

\begin{cor}
\label{cor:entropy-at-most-two}
Let $\Sigma^2\subset\mathbb{R}^3$ be a collapsed, complete, connected, embedded translating soliton with finite genus, and
$$
\lambda(\Sigma)<3.
$$
Then precisely one of the following holds:
\begin{enumerate}
\item $\lambda(\Sigma)=1$ and $\Sigma$ is a vertical plane;
\item $\lambda(\Sigma)=2$ and $\Sigma$ is a possibly tilted grim reaper
cylinder or a $\Delta$-wing.
\end{enumerate}
\end{cor}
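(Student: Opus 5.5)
The corollary should follow from the wing-counting formula \eqref{eq:entropy-left-right}, the pairing \eqref{eq:planar-pairing}, Theorem~\ref{thm:planar_wings} and Theorem~\ref{th:entropy-two-classification}. The first task is to see that $\lambda(\Sigma)$ is a positive integer. By \eqref{eq:entropy-left-right}, $\lambda(\Sigma)=\omega_P^+(\Sigma)+2\omega_G^+(\Sigma)$, and the right-hand side is a nonnegative integer. It is positive because a complete noncompact surface in a slab has at least one wing; alternatively, the entropy of any complete translator is at least $1$, by the monotonicity/density bound at a point of $\Sigma$. Hence the hypothesis $\lambda(\Sigma)<3$ leaves only $\lambda(\Sigma)\in\{1,2\}$.

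\emph{Case $\lambda(\Sigma)=2$.} Here Theorem~\ref{th:entropy-two-classification} applies directly and gives a possibly tilted grim reaper cylinder or a $\Delta$-wing.

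\emph{Case $\lambda(\Sigma)=1$.} Write $1=\omega_P^+(\Sigma)+2\omega_G^+(\Sigma)$. This forces $\omega_G^+(\Sigma)=0$ and $\omega_P^+(\Sigma)=1$. The same count on the left side gives $\omega_G^-(\Sigma)=0$ and $\omega_P^-(\Sigma)=1$, and by \eqref{eq:planar-pairing} there are exactly two planar wings. Thus every wing of $\Sigma$ is of planar type, and Theorem~\ref{thm:planar_wings} shows that $\Sigma$ is a vertical plane. As a consistency check, a vertical plane has entropy $1$, since it is a plane.

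Exclusivity of the two alternatives is immediate, because the entropies differ. The only point needing care is ruling out $\lambda=0$ and non-integer values. I would rely on \eqref{eq:entropy-left-right} from \cite[Theorem 8.2]{GMM-AJM}, whose hypotheses (collapsed, complete, embedded, finite genus, finite entropy) hold here since $\lambda<3$. I would add the remark that a nonempty surface has entropy at least $1$.
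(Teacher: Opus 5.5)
Your proposal is correct and follows essentially the same route as the paper. The paper obtains integrality of $\lambda(\Sigma)$ by citing \cite{GMM-AJM}, whereas you read it off from the wing formula \eqref{eq:entropy-left-right}, which comes from the same source; your Gaussian-density bound $\lambda\ge 1$ is the cleaner of your two positivity justifications. From there both arguments settle $\lambda=1$ via the wing count and Theorem~\ref{thm:planar_wings}, and $\lambda=2$ via Theorem~\ref{th:entropy-two-classification}.
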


\begin{proof}
For collapsed translators of finite genus and finite entropy, the entropy is a
positive integer $\lambda(\Sigma)\in\mathbb{N}$ by \cite{GMM-AJM}. If $\lambda(\Sigma)=1$, the wing
formula \eqref{eq:entropy-left-right} with Theorem \ref{thm:planar_wings}, or alternatively the standard entropy-one rigidity, gives a vertical
plane. If $\lambda(\Sigma)=2$, the assertion follows from
Theorem~\ref{th:entropy-two-classification}.
\end{proof}

\section{Horizontal fluxes of planar ends}
\label{sec:horizontal-flux}

In this section we record some consequences of the sharp decay estimates
for bounded exterior solutions obtained in Section~2. A planar graphical
end carries two natural weighted horizontal fluxes, i.e. fluxes orthogonal to $\ee_3$. The flux component normal
to the asymptotic vertical plane is conserved and may be nonzero, whereas
the component tangent to that plane and orthogonal to $\ee_3$ necessarily vanishes.

We also explain how a nonzero normal flux provides an obstruction to improving the decay estimate of Theorem~\ref{thm:optimal_decay} from $O(G_L)$ to
$o(G_L)$, and give a uniqueness result (flatness) for faster-than-Green's decay. Finally, we compute the corresponding linear fluxes explicitly for
the separated solutions of the linearized equation.

Let
$$
X \colon A_{R_0}\longrightarrow \mathbb{R}^3,
\qquad
X(x_2,x_3)=\bigl(u(x_2,x_3),x_2,x_3\bigr),
$$
be a bounded horizontal graphical translator, where
\[
A_{R_0}=\mathbb{R}^2\setminus B_{R_0}(0).
\]
Recall that, using the notation 
$
W=\sqrt{1+|\nabla u|^2}
$, then the horizontal graphical translator equation is
\[
\operatorname{div}\left(\frac{\nabla u}{W}\right)
=
-\frac{\partial_3u}{W}
\quad \Longleftrightarrow \quad
\operatorname{div}\left(
e^{x_3}\frac{\nabla u}{W}
\right)=0.
\]

For $R>R_0$, let
\[
S_R=\partial B_R(0)\subset\mathbb{R}^2,
\]
let $\mu=(\mu_2,\mu_3)$ be its outward Euclidean unit normal, and let
$\eta$ denote the outward unit conormal to $X(S_R)$ in the graph.
We write
\[
u_\mu=\langle\nabla u,\mu\rangle.
\]

\subsection{The normal horizontal flux}

A direct computation gives
\begin{equation}
\langle\eta,\ee_1\rangle\,ds_\Sigma
=
\frac{u_\mu}{W}\,ds_0,
\label{eq:normal-horizontal-conormal}
\end{equation}
where $ds_0$ denotes Euclidean arclength on $S_R$. We therefore
define
\begin{equation}
\mathcal{F}_1(R)
:=
\int_{X(S_R)}
e^{x_3}\langle\eta,\ee_1\rangle\,ds_\Sigma
=
\int_{S_R}
e^{x_3}\frac{u_\mu}{W}\,ds_0,
\label{eq:normal-horizontal-flux}
\end{equation}
which is finite owing to Theorem \ref{thm:optimal_decay}.

Equation~\eqref{eq:weighted-translator-intro} immediately implies that
$\mathcal{F}_1(R)$ is independent of $R$. Indeed, if
$R_0<R_1<R_2$, the divergence theorem applied to
$B_{R_2}(0)\setminus B_{R_1}(0)$ gives
\[
\mathcal{F}_1(R_2)=\mathcal{F}_1(R_1).
\]
Thus each planar graphical end $E$ carries a well-defined scalar
invariant
\begin{equation}
q(E):=\mathcal{F}_1(R),
\label{eq:horizontal-charge}
\end{equation}
which we call its normal horizontal flux.

More generally, suppose that $u$ is a bounded solution of the
horizontal graphical translator equation on
\[
\mathbb{R}^2\setminus\Omega,
\]
where $\Omega\subset\mathbb{R}^2$ is a smooth bounded domain, and let
$\nu_\Omega$ denote the outward unit normal to $\partial\Omega$.
Applying the divergence theorem to $B_R(0)\setminus\Omega$ yields the identity
\begin{equation}
q(E)
=
\int_{\partial\Omega}
e^{x_3}
\frac{u_{\nu_\Omega}}{W}\,ds_0.
\label{eq:inner-boundary-horizontal-flux}
\end{equation}
Thus the horizontal flux measured at infinity is completely determined
by the Cauchy data on any compact cross-section of the end.

\subsection{Vanishing of the transverse horizontal flux}

There is a second conserved flux, corresponding to translations in the
$\ee_2$-direction. A direct computation gives
\begin{equation}
\langle\eta,\ee_2\rangle\,ds_\Sigma
=
\left(
W\mu_2-\frac{u_\mu\,\partial_2u}{W}
\right)ds_0.
\label{eq:transverse-horizontal-conormal}
\end{equation}
We define
\begin{equation}
\mathcal{F}_2(R)
:=
\int_{X(S_R)}
e^{x_3}\langle\eta,\ee_2\rangle\,ds_\Sigma.
\label{eq:transverse-horizontal-flux}
\end{equation}

This quantity is also independent of $R$. One way to see this is to recall that the graphical translator equation is the Euler--Lagrange
equation for the weighted area functional
\[
\int e^{x_3}W\,dx_2\,dx_3,
\]
which is invariant under translations in the $x_2$-direction.
Equivalently, the vector field
\[
e^{x_3}
\left(
W \ee_2-\frac{\partial_2u}{W}\nabla u
\right)
\]
is divergence-free.

We now show that this second flux must vanish. From
\eqref{eq:transverse-horizontal-conormal},
\[
\mathcal{F}_2(R)
=
\int_{S_R}
e^{x_3}
\left(
W\mu_2-\frac{u_\mu\,\partial_2u}{W}
\right)ds_0.
\]
Since
\begin{equation}
\int_{S_R}e^{x_3}\mu_2\,ds_0=0,
\label{eq:mu2-zero}
\end{equation}
we can subtract the leading term and obtain
\[
\mathcal{F}_2(R)
=
\int_{S_R}
e^{x_3}
\left(
(W-1)\mu_2-\frac{u_\mu\,\partial_2u}{W}
\right)ds_0.
\]
Using (as in \eqref{eq:W_diff})
\[
\left|
\frac{u_\mu\,\partial_2u}{W}
\right|
\leq|\nabla u|^2,\quad |W-1|\leq C|\nabla u|^2,
\]
we find
\begin{equation}
|\mathcal{F}_2(R)|
\leq
C\int_{S_R}e^{x_3}|\nabla u|^2\,ds_0.
\label{eq:transverse-flux-estimate}
\end{equation}

By the gradient estimate following Theorem~\ref{thm:optimal_decay},
\[
|\nabla u(x)|
\leq C G_L(x),
\qquad
G_L(x)
=
\frac{1}{2\pi}
e^{-x_3/2}
K_0\left(\frac{|x|}{2}\right).
\]
Hence, on $S_R$,
\[
e^{x_3}|\nabla u|^2
\leq
C K_0\left(\frac{R}{2}\right)^2,
\]
and therefore
\[
|\mathcal{F}_2(R)|
\leq
CRK_0\left(\frac{R}{2}\right)^2.
\]
Since
\[
K_0\left(\frac{R}{2}\right)
=O\left(
\sqrt{\frac{\pi}{R}}\,e^{-R/2}\right),
\]
we obtain
\[
|\mathcal{F}_2(R)|=O(e^{-R}).
\]
As $\mathcal{F}_2(R)$ is independent of $R$, it follows that
\[
\mathcal{F}_2(R)=0
\]
for every $R>R_0$.

We have proved the following.

\begin{prop}
\label{prop:horizontal-flux}
Let $E$ be a horizontal collapsed graphical end of a translator, as in \ref{dfn:collapsed_graphical_end}. Then
its weighted horizontal flux is parallel to the direction normal to its
asymptotic vertical plane. More precisely,
\[
\int_{X(S_R)}
e^{x_3}\langle\eta,\ee_1\rangle\,ds_\Sigma
=
q(E),
\]
where $q(E)$ is independent of $R$, whereas
\[
\int_{X(S_R)}
e^{x_3}\langle\eta,\ee_2\rangle\,ds_\Sigma
=
0.
\]
\end{prop}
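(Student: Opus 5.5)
The plan is to reduce the statement to the normalized setting of Definition~\ref{dfn:collapsed_graphical_end} and then repeat, for a general end, the two flux computations carried out just above. The translator equation \eqref{TSE_geom} is invariant under horizontal rotations about the $x_3$-axis and under translations. So we may assume $v=\ee_1$ and $0\in\Omega$, and write $E$ as the graph of a bounded $u$ solving \eqref{TSE} on $\mathbb{R}^2\setminus\Omega$. Choosing $R_0$ with $\overline\Omega\subset B_{R_0-1}(0)$, the restriction of $u$ to $A_{R_0}$ is a bounded exterior solution. Hence Theorem~\ref{thm:optimal_decay} applies and gives $|u-u_\infty|+|\nabla u|\le C_0G_L$ for $|x|\ge R_0$. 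Here $\{x_1=u_\infty\}$ is the asymptotic vertical plane, whose normal is $\ee_1$.

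\textbf{Normal component.} We use the divergence form \eqref{eq:weighted-translator-intro}: the field $e^{x_3}\nabla u/W$ is divergence free. The conormal identity \eqref{eq:normal-horizontal-conormal} writes $\mathcal F_1(R)$ as the Euclidean flux of this field through $S_R$. We need this integral to be finite; this holds because $e^{x_3}|\nabla u|\le Ce^{x_3/2}K_0(R/2)$, which is bounded on $S_R$. The divergence theorem on $B_{R_2}\setminus B_{R_1}$ then gives $\mathcal F_1(R_1)=\mathcal F_1(R_2)$, so $q(E)$ is well defined and independent of $R$.

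\textbf{Transverse component.} First we check that the field $V=e^{x_3}\bigl(W\ee_2-\partial_2u\,\nabla u/W\bigr)$ is divergence free. Expanding,
\[
\operatorname{div}V=e^{x_3}\Bigl(\partial_2W-\partial_2u\,\operatorname{div}(\nabla u/W)-\tfrac{\partial_3u\,\partial_2u}{W}-\tfrac{\nabla u\cdot\nabla\partial_2u}{W}\Bigr).
\]
Since $\partial_2W=\nabla u\cdot\nabla\partial_2u/W$, the first and last terms cancel, and the middle two cancel by \eqref{TSE}. This is the Noether current for $x_2$-translations of the functional $\int e^{x_3}W$. Next, \eqref{eq:transverse-horizontal-conormal} identifies $\mathcal F_2(R)$ as the flux of $V$ through $S_R$, so $\mathcal F_2$ is independent of $R$. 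Since $\int_{S_R}e^{x_3}\mu_2\,ds_0=0$ by the symmetry $x_2\mapsto -x_2$, we may subtract the leading term. Then \eqref{eq:W_diff} yields the bound \eqref{eq:transverse-flux-estimate}. Inserting $|\nabla u|\le CG_L$, the weight $e^{x_3}$ exactly cancels the factor $e^{-x_3}$ in $G_L^2$, leaving $|\mathcal F_2(R)|\le CRK_0(R/2)^2=O(e^{-R})$. Letting $R\to\infty$ shows $\mathcal F_2\equiv0$.

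\textbf{Main obstacle.} The substantive difficulty is the anisotropic weight. Since $e^{x_3}$ grows exponentially upward, the isotropic gradient bounds \eqref{eq:grad_up} or \eqref{est:general} would make the boundary integrals diverge. They are useless here, and one genuinely needs the sharp bound $|\nabla u|\le CG_L$ from Theorem~\ref{thm:optimal_decay}. That bound absorbs the weight: $e^{x_3}G_L^2\sim K_0(R/2)^2$ uniformly in angle. Once it is available, the remaining steps are divergence-theorem bookkeeping, together with checking the conormal formulas \eqref{eq:normal-horizontal-conormal} and \eqref{eq:transverse-horizontal-conormal} for a horizontal graph.
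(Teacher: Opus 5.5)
Your proposal is correct and follows essentially the same route as the paper. You obtain conservation of $\mathcal F_1$ and $\mathcal F_2$ from the divergence-free fields $e^{x_3}\nabla u/W$ and $e^{x_3}\bigl(W\ee_2-\partial_2u\,\nabla u/W\bigr)$ on annuli. You then get vanishing of $\mathcal F_2$ by subtracting $\int_{S_R}e^{x_3}\mu_2\,ds_0=0$ and inserting the sharp bound $|\nabla u|\le CG_L$, which gives $|\mathcal F_2(R)|\le CRK_0(R/2)^2=O(e^{-R})$. Your explicit check that the transverse field is divergence free usefully spells out what the paper only asserts via the Euler--Lagrange remark.
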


\subsection{An obstruction to improved decay}

The normal horizontal flux gives a simple obstruction to improving the
sharp decay estimate of Theorem~\ref{thm:optimal_decay}.

First observe that
\begin{equation}
\int_{S_R}e^{x_3}G_L\,ds_0
=
R K_0\left(\frac{R}{2}\right)
I_0\left(\frac{R}{2}\right),
\label{eq:weighted-green-integral}
\end{equation}
where $I_0$ is the modified Bessel function of the first kind. Indeed,
writing
\[
x_2=R\cos\theta,
\qquad
x_3=R\sin\theta,
\]
we have
\[
\begin{aligned}
\int_{S_R}e^{x_3}G_L\,ds_0
&=
\frac{R}{2\pi}
K_0\left(\frac{R}{2}\right)
\int_0^{2\pi}e^{(R/2)\sin\theta}\,d\theta
\\
&=
R K_0\left(\frac{R}{2}\right)
I_0\left(\frac{R}{2}\right).
\end{aligned}
\]
Using
\[
I_0(s)=\frac{e^s}{\sqrt{2\pi s}}+o\left(\frac{e^s}{\sqrt{2\pi s}}\right),
\qquad
K_0(s)=\sqrt{\frac{\pi}{2s}}\,e^{-s}+o\left(\sqrt{\frac{\pi}{2s}}\,e^{-s}\right),\quad \mathrm{as}\quad s\to\infty,
\]
we obtain
\begin{equation}
\lim_{R\to\infty}
\int_{S_R}e^{x_3}G_L\,ds_0
=1.
\label{eq:weighted-green-limit}
\end{equation}
Consequently, if
\begin{equation}
|\nabla u(x)|=o(G_L(x))
\qquad\text{as }|x|\to\infty
\label{eq:little-o-gradient}
\end{equation}
uniformly at infinity, then
\[
\begin{aligned}
|q(E)|
&\leq
\int_{S_R}
e^{x_3}\frac{|\nabla u|}{W}\,ds_0
\\
&\leq
\int_{S_R}e^{x_3}|\nabla u|\,ds_0
=o(1)
\end{aligned}
\]
as $R\to\infty$. Hence $q(E)=0$.

\begin{cor}
\label{cor:flux-obstruction}
Let $E$ be a bounded horizontal graphical end. If $q(E)\neq0$, then
the gradient estimate
\[
|\nabla u|=O(G_L)
\]
cannot be improved to
\[
|\nabla u|=o(G_L)
\]
uniformly at infinity.
\end{cor}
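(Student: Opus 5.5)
The corollary is the contrapositive of the computation just preceding it. The plan is to assume $|\nabla u|=o(G_L)$ uniformly at infinity and then show $q(E)=0$.

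The first step uses conservation. By \eqref{eq:weighted-translator-intro} and the divergence theorem on annuli $B_{R_2}\setminus B_{R_1}$, the quantity $\mathcal{F}_1(R)=\int_{S_R}e^{x_3}u_\mu/W\,ds_0$ is independent of $R>R_0$ and equals $q(E)$ for every $R$. Since $W\ge1$ and $|u_\mu|\le|\nabla u|$, we get
\[
|q(E)|\le\int_{S_R}e^{x_3}|\nabla u|\,ds_0 .
\]

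The second step quantifies the $o(G_L)$ hypothesis. Given $\varepsilon>0$, choose $R_\varepsilon$ such that $|\nabla u|\le\varepsilon G_L$ on $S_R$ for all $R\ge R_\varepsilon$. Then
\[
|q(E)|\le\varepsilon\int_{S_R}e^{x_3}G_L\,ds_0=\varepsilon\,R\,K_0(R/2)\,I_0(R/2)
\]
by \eqref{eq:weighted-green-integral}. By \eqref{eq:weighted-green-limit} the right side tends to $\varepsilon$ as $R\to\infty$. Hence $|q(E)|\le\varepsilon$ for every $\varepsilon>0$, so $q(E)=0$, which contradicts the assumption $q(E)\neq0$.

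The main point needing care is the normalization. One has to check that the weighted integral of $G_L$ over circles is genuinely bounded, and in fact tends to $1$, rather than growing. This is where the anisotropy matters: the weight $e^{x_3}$ exactly cancels the $e^{-x_3/2}$ factor only to the power $1/2$, leaving $\int e^{(R/2)\sin\theta}d\theta=2\pi I_0(R/2)$. The exponential growth of $I_0(R/2)$ is then balanced by the decay of $K_0(R/2)$, and together with the factor $R$ the product $RK_0I_0$ tends to $1$. This is already recorded in \eqref{eq:weighted-green-limit}, so the remaining argument is the routine $\varepsilon$-bookkeeping above. Finally, by Theorem~\ref{thm:optimal_decay} the bound $|\nabla u|=O(G_L)$ does hold for such ends, so the statement is non-vacuous: the flux exactly saturates the Green's function order.
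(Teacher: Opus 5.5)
Your proof is correct and is essentially the paper's own argument: you use conservation of $\mathcal{F}_1(R)$ and the bound $|q(E)|\le\int_{S_R}e^{x_3}|\nabla u|\,ds_0$. You then combine the $o(G_L)$ hypothesis with the normalization $\int_{S_R}e^{x_3}G_L\,ds_0=R\,K_0(R/2)\,I_0(R/2)\to 1$ to force $q(E)=0$. Your $\varepsilon$-bookkeeping just makes explicit the paper's ``$=o(1)$'' step.
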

Thus a nonzero horizontal flux detects planar ends for which the
Green's function scale occurring in Theorem~\ref{thm:optimal_decay}
cannot be replaced by a strictly smaller uniform order of decay.

An interesting consequence of this fact is the following, where we note that the case where the limit of $u$ at infinity $u_\infty\neq 0$ differs from the boundary value is the interesting case:
\begin{prop}
\label{prop:flux-obstruction-rewritten}
Let $E$ be a horizontal collapsed graphical end as in Definition \ref{dfn:collapsed_graphical_end}, such that $\partial E\subseteq\{x_1=0\}$ is compact. Suppose that the bounded function $u$ representing $E$ satisfies
\begin{equation}\label{extra_decay}
|\nabla u|=o(G_L).
\end{equation}
Then $u = 0$, so that $E$ is a flat plane.
\end{prop}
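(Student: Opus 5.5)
The plan is a weighted energy (Dirichlet-type) identity for the divergence form \eqref{eq:weighted-translator-intro}, tested against $u$ itself. The boundary term at $\partial\Omega$ vanishes by the boundary condition, and the term at infinity is killed by the vanishing of the normal horizontal flux together with the sharp decay of Theorem~\ref{thm:optimal_decay}.

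\textbf{Setup.} Arrange $v=\ee_1$ and $0\in\Omega$ as in Definition~\ref{dfn:collapsed_graphical_end}. The hypothesis $\partial E\subseteq\{x_1=0\}$ then means $u=0$ on $\partial\Omega$. Theorem~\ref{thm:optimal_decay} gives a limit $u_\infty$ with $|u-u_\infty|\le C_0G_L$ far out. The assumption \eqref{extra_decay} gives $q(E)=0$, exactly as in the computation preceding Corollary~\ref{cor:flux-obstruction}.

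\textbf{Energy identity.} For $R$ large, multiply $\operatorname{div}(e^{x_3}\nabla u/W)=0$ by $u$ and integrate over $B_R(0)\setminus\Omega$. The divergence theorem yields
\[
\int_{B_R\setminus\Omega} e^{x_3}\frac{|\nabla u|^2}{W}\,dx
=\int_{S_R} e^{x_3}\frac{u\,u_\mu}{W}\,ds_0-\int_{\partial\Omega}e^{x_3}\frac{u\,u_{\nu_\Omega}}{W}\,ds_0 .
\]
The $\partial\Omega$ term is zero because $u=0$ there. On $S_R$, write $u=u_\infty+(u-u_\infty)$. The $u_\infty$ part equals $u_\infty\,\mathcal F_1(R)=u_\infty\,q(E)=0$ by \eqref{eq:normal-horizontal-flux} and the conservation of $\mathcal F_1$. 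The remaining part is bounded by
\[
\int_{S_R}e^{x_3}|u-u_\infty|\,|\nabla u|\,ds_0\le C_0\,\Bigl(\sup_{S_R}\tfrac{|\nabla u|}{G_L}\Bigr)\Bigl(\sup_{S_R}G_L\Bigr)\int_{S_R}e^{x_3}G_L\,ds_0 .
\]

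\textbf{Limit $R\to\infty$.} Here $\sup_{S_R}|\nabla u|/G_L=o(1)$ by \eqref{extra_decay}. Next, $\sup_{S_R}G_L\le \tfrac{1}{2\pi}e^{R/2}K_0(R/2)=O(R^{-1/2})$ is bounded. Finally, $\int_{S_R}e^{x_3}G_L\,ds_0\to1$ by \eqref{eq:weighted-green-limit}. So the right-hand side tends to $0$. Monotone convergence then gives $\int_{\mathbb{R}^2\setminus\Omega}e^{x_3}|\nabla u|^2/W=0$. Hence $\nabla u\equiv0$ on the connected exterior domain, so $u$ is constant, and $u=0$ since it vanishes on $\partial\Omega$. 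Thus $E\subset\{x_1=0\}$ is flat.

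\textbf{Main obstacle.} The delicate step is controlling the outer boundary term. The weight $e^{x_3}$ grows like $e^{R}$ on the top of $S_R$. It is compensated only because both $|u-u_\infty|$ and $|\nabla u|$ carry a factor $G_L$, which has the anisotropic factor $e^{-x_3/2}$ each. This is exactly why the sharp Green's-function bounds of Theorem~\ref{thm:optimal_decay} are needed, and why the $u_\infty$ contribution must be removed separately via $q(E)=0$ rather than estimated. One must also check that the sign and orientation of the inner conormal play no role, which holds since that term vanishes identically.
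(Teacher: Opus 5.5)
Your proof is correct, but it takes a different route from the paper's.

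\textbf{The paper's route.} The paper argues by contradiction through the sign of the conserved flux. Assuming $u_\infty>0$, the maximum principle on the exterior domain gives $0<u<u_\infty$; this is legitimate only because Theorem~\ref{thm:optimal_decay} supplies the limit at infinity. Hopf's lemma then gives $u_{\nu_\Omega}=|\nabla u|>0$ on $\partial\Omega$. The inner-boundary formula \eqref{eq:inner-boundary-horizontal-flux} therefore forces $q(E)>0$, contradicting $q(E)=0$.

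\textbf{Your route.} You test the divergence-form equation against $u$ itself. The Dirichlet condition kills the inner boundary term outright, so you need neither the maximum principle on an unbounded domain nor Hopf's lemma. The cost is controlling the outer term $\int_{S_R}e^{x_3}(u-u_\infty)u_\mu/W$. This is easier than your ``main obstacle'' paragraph suggests. Since $e^{x_3}G_L^2=\frac{1}{4\pi^2}K_0(|x|/2)^2$ is isotropic, the $O(G_L)$ bounds alone bound this term by $C_0^2\frac{R}{2\pi}K_0(R/2)^2=O(e^{-R})$. So the hypothesis \eqref{extra_decay} is used only to get $q(E)=0$.

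\textbf{What each route buys.} Your argument actually yields the exact identity
\[
\int_{\mathbb{R}^2\setminus\Omega}e^{x_3}\frac{|\nabla u|^2}{W}\,dx=u_\infty\,q(E)
\]
for every bounded exterior solution vanishing on $\partial\Omega$. This is a quantitative form of the sign information the paper extracts from Hopf's lemma: $q(E)$ has the sign of $u_\infty$, and it vanishes only if $u\equiv0$. The paper's route, by contrast, needs only the first-order flux together with qualitative sign information from the maximum principle.
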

\begin{proof}

Firstly, existence of the limit follows from Theorem \ref{thm:optimal_decay}:
\begin{equation}\label{eq:u_inf}
    \lim_{|x|\to\infty}u(x)=u_\infty.
\end{equation}
 If $u_\infty = 0$, then by the maximum principle, $u=0$, even without using the extra decay assumption \eqref{extra_decay}. If not, we may assume $u_\infty > 0$. By the maximum principle,
\begin{equation}\label{eq:u_between}
\inf_{\R^2\setminus \Omega}u=0<u(p)<\sup_{\R^2\setminus \Omega} u=u_\infty,\quad\mathrm{for}\;\mathrm{all}\quad p\in\R^2\setminus \overline{\Omega}.
\end{equation}

On $\partial\Omega$,  $|\nabla u| > 0$ follows from $u > 0$ with $u_{|\partial \Omega} = 0$ by Hopf's lemma \cite{Serrin70} for \eqref{TSE}. The unit normal to $\partial \Omega$ that points outwards (towards $\mathbb{R}^2\setminus \Omega$) is given by $\nu=\frac{\nabla u}{|\nabla u|}$, so that $u_\nu = |\nabla u|$. Therefore, the assumption that $|\nabla u|=o(G_L)$ gives $q(E)=0$ in \eqref{eq:inner-boundary-horizontal-flux}, so as shown above,
\begin{equation}\label{zero_flux} \int_{\partial \Omega}e^{x_3}\frac{|\nabla u|}{W}ds_0=0,
\end{equation}
a contradiction. Therefore, $u$ must vanish identically.
\end{proof}

\subsection{Linearized mode analysis}

The preceding observation is naturally reflected in the linearized
equation. Recall that
\[
L=\Delta+\partial_{x_3}.
\]
If
\[
u=e^{-x_3/2}v,
\]
then
\[
Lu
=
e^{-x_3/2}
\left(\Delta v-\frac14v\right).
\]
Hence, as already used in \cite{GMM-UniquenessTangentPlanes} and in the proof of Lemma \ref{lem:Lu_f_decay}, the conjugation gives that $L$-harmonicity of $u$ is equivalent to Yukawa harmonicity of $v$:
\[
Lu=0
\qquad\Longleftrightarrow\qquad
\left(\Delta-\tfrac14\right)v=0.
\]

Writing
\[
x_2=r\cos\theta,
\qquad
x_3=r\sin\theta,
\]
separation of variables gives the exterior solutions
\[
u_n^c(r,\theta)
=
e^{-x_3/2}
K_n\left(\frac r2\right)\cos(n\theta),
\qquad n\geq0,
\]
and
\[
u_n^s(r,\theta)
=
e^{-x_3/2}
K_n\left(\frac r2\right)\sin(n\theta),
\qquad n\geq1.
\]
For each fixed $n$,
\[
K_n(s)
=
\sqrt{\frac{\pi}{2s}}\,e^{-s}
\left(1+O(s^{-1})\right),
\]
and therefore
\[
\frac{K_n(r/2)}{K_0(r/2)}
\longrightarrow1,\quad\mathrm{as}\quad r\to\infty.
\]
Thus all these modes have the same order of decay as $G_L$. In
particular, the bound
\[
|u|\leq C G_L
\]
does not imply that $u/G_L$ converges to a constant.

It is instructive to compute the horizontal flux of these modes explicitly. For $u$ an $L$-harmonic function, i.e. solving $Lu=0$, define the linearized normal flux by
\begin{equation}
q_{\mathrm{lin}}(u)
:=
\int_{S_R}e^{x_3}\partial_\mu u\,ds_0,
\label{eq:linear-flux}
\end{equation}
where $\mu$ denotes the outward unit normal. Since
\[
\operatorname{div}(e^{x_3}\nabla u)=e^{x_3}Lu=0,
\]
the quantity $q_{\mathrm{lin}}(u)$ is then independent of $R$.

Set $s=R/2$. For the cosine modes,
\[
\begin{aligned}
q_{\mathrm{lin}}(u_n^c)
&=
s K_n'(s)
\int_0^{2\pi}
e^{s\sin\theta}\cos(n\theta)\,d\theta
\\
&\quad
-
s K_n(s)
\int_0^{2\pi}
e^{s\sin\theta}
\sin\theta\cos(n\theta)\,d\theta.
\end{aligned}
\]
Using
\[
\int_0^{2\pi}
e^{s\sin\theta}\cos(n\theta)\,d\theta
=
2\pi I_n(s)\cos\left(\frac{n\pi}{2}\right)
\]
and differentiating this identity with respect to $s$, we obtain
\[
q_{\mathrm{lin}}(u_n^c)
=
2\pi s
\left(
I_n(s)K_n'(s)-I_n'(s)K_n(s)
\right)
\cos\left(\frac{n\pi}{2}\right).
\]
The Wronskian identity
\[
I_n(s)K_n'(s)-I_n'(s)K_n(s)
=
-\frac1s
\]
therefore gives
\begin{equation}
q_{\mathrm{lin}}(u_n^c)
=
-2\pi\cos\left(\frac{n\pi}{2}\right).
\label{eq:cos-mode-flux}
\end{equation}

Similarly,
\begin{equation}
q_{\mathrm{lin}}(u_n^s)
=
-2\pi\sin\left(\frac{n\pi}{2}\right).
\label{eq:sin-mode-flux}
\end{equation}

These formulas illustrate both the usefulness and the limitations of
the horizontal flux. Different angular modes generally have the same
order as the Green's function at infinity, so the flux cannot determine the full leading asymptotic behavior. Nevertheless, it extracts a nontrivial scalar quantity from that behavior.

Namely, let a sum of such modes be given,
\[
u
=
\sum_{n=0}^{N}
\left(
a_nu_n^c+b_nu_n^s
\right),
\]
where $b_0=0$, and set
\[
A(\theta)
=
\sum_{n=0}^{N}
\bigl(
a_n\cos(n\theta)+b_n\sin(n\theta)
\bigr).
\]
Then
\[
\frac{u(r,\theta)}
{e^{-x_3/2}K_0(r/2)}
\longrightarrow A(\theta)
\qquad\text{as }r\to\infty,
\]
and \eqref{eq:cos-mode-flux}--\eqref{eq:sin-mode-flux} give
\begin{equation}
q_{\mathrm{lin}}(u)
=
-2\pi A\left(\frac{\pi}{2}\right).
\label{eq:linear-flux-angular-profile}
\end{equation}
Thus, for finite sums of modes, the weighted horizontal flux
detects the value of the leading angular profile $A(\theta)$ in the upward
$\ee_3$-direction.

This linear calculation also shows why, for a
general bounded exterior solution, an asymptotic expansion as a simple multiple of $G_L$ should not be expected: already at the linearized level, infinitely
many angular modes occur with the same order of decay as the Green's function.

\section{Vertical flux of planar ends}\label{sec:vert_flux}
In this section we record some consequences of the sharp decay estimates
for bounded exterior solutions obtained in Section~2 in the case of vertical flux. Besides Theorem \ref{thm:planar_wings}, we are going to see that the vertical flux encodes important information about the behavior at the boundary of a planar end. This information may potentially be useful for matching data in gluing techniques. 

\subsection{Nodal sets of the first Gauss map component}
Let $\Sigma\subset\mathbb{R}^3$ be a translator and set
\begin{equation}
\psi=\langle \nu,\ee_1\rangle,
\end{equation}
where $\nu$ is a unit normal to $\Sigma$.
Then by direct calculation, or geometrically using that motion in the direction of $\ee_1$ produces a Killing field, we have the following well-known fact:
\begin{equation}
    \left(\Delta + \ee_3\cdot\nabla + |A|^2\right)\psi = 0,
\end{equation}
so that $\psi$ is harmonic with respect to the Jacobi operator $\Delta + \ee_3\cdot\nabla + |A|^2$. In this sense, the zero set of $\psi$ is a nodal set.

Suppose that $E$ is a connected component of $\Sigma\setminus \{\langle\nu,\ee_1\rangle=0\}.$ WLOG assume that
\[
\psi>0
\qquad\text{on }  E.
\]
Since
$
\partial E
\subset
\{\psi=0\},
$
one has
\[
\ee_1\in T_p\Sigma\qquad \forall p\in\partial E.
\]

Suppose that $\Gamma$ is a regular component of
$\{\psi=0\}$ contained in $\partial E$. Then by the Hopf Lemma
\[
\nabla_\Sigma\psi\neq0
\qquad\text{along }\Gamma,
\]
and the unit conormal to $\Gamma$ in $\Sigma$ is necessarily parallel
to $\nabla_\Sigma\psi$. More precisely, since
$E\subseteq\{\psi>0\}$, the outward conormal is
\begin{equation}
\eta
=
-\frac{\nabla_\Sigma\psi}
{|\nabla_\Sigma\psi|}.
\label{eq:conormal-angle}
\end{equation}

On the other hand, differentiating
\[
\psi=\langle\nu,\ee_1\rangle
\]
along $\Sigma$ gives
\begin{equation}
\nabla_\Sigma\psi
=
-A(\ee_1^\top),
\label{eq:gradient-angle}
\end{equation}
where $A$ denotes the shape operator. Along $\Gamma$ we have
$\psi=0$, and hence $\ee_1^\top=\ee_1$. Thus
\begin{equation}
\nabla_\Sigma\psi
=
-A(\ee_1)
\qquad\text{on }\Gamma.
\label{eq:gradient-angle-boundary}
\end{equation}
Consequently,
\begin{equation}
\eta
=
\frac{A(\ee_1)}{|A(\ee_1)|}
\qquad\text{on the regular part of }\Gamma,
\label{eq:conormal-second-fundamental}
\end{equation}
with the above choice of orientation.

In particular, the condition
\[
\eta=\pm\ee_1
\]
holds true if, and only if,  $\ee_1$ is a
principal direction along $\Gamma$.

\subsection{The vertical flux}

Suppose now $E\subseteq\Sigma\setminus \{\langle\nu,\ee_1\rangle=0\}$ is a horizontal collapsed graphical end in the direction of $\ee_1$, in the sense of Definition \ref{dfn:collapsed_graphical_end}. Moreover, assume that the finite boundary of
$E$ consists of a regular closed curve
\(
\partial E
=
\Gamma.
\)
Let $\eta$ denote the conormal pointing outward from $E$ along this
finite boundary.

For $R$ sufficiently large, truncate $E$ by the cylinder
\[
C_R
=
\left\{
x_2^2+x_3^2\leq R^2
\right\},
\]
and denote
\[
E(R)=E\cap C_R.
\]
The Neves--Tian identity (Lemma \ref{lem:neves_tian}) gives
\begin{equation}
\int_{E(R)}|\vec H|^2\,dA
=
\int_{\Gamma}
\langle\eta,\ee_3\rangle\,ds
+
\int_{\partial E(R)\cap\partial C_R}
\langle\eta_R,\ee_3\rangle\,ds.
\label{eq:NT-maximal-graph}
\end{equation}
Since the outer part of $E$ agrees with the planar graphical end, the
decay estimate for the vertical flux gives
\[
\int_{\partial E(R)\cap\partial C_R}
\langle\eta_R,\ee_3\rangle\,ds
\longrightarrow0
\qquad\text{as }R\to\infty.
\]
Therefore,
\begin{equation}
\int_{E}|\vec H|^2\,dA
=
\int_{\Gamma}
\langle\eta,\ee_3\rangle\,ds.
\label{eq:vertical-flux-maximal-graph}
\end{equation}

Thus the $\ee_3$-flux through the finite boundary of $E$ measures precisely the total $L^2$-norm of the mean curvature contained in that
region.

Using~\eqref{eq:conormal-angle}, this can also be expressed directly in
terms of the angle function:
\begin{equation}
\langle\eta,\ee_3\rangle
=
-
\frac{
\langle\nabla_\Sigma\psi,\ee_3^\top\rangle
}{
|\nabla_\Sigma\psi|
}.
\label{eq:vertical-flux-angle}
\end{equation}
Equivalently, by~\eqref{eq:gradient-angle-boundary},
\begin{equation}
\langle\eta,\ee_3\rangle
=
\frac{
\langle A(\ee_1),\ee_3^\top\rangle
}{
|A(\ee_1)|
}
\qquad\text{along }\Gamma.
\label{eq:vertical-flux-second-fundamental}
\end{equation}

Hence the vertical flux through the neck region is related to the way
in which the horizontal angle function
\[
\psi=\langle\nu,\ee_1\rangle
\]
crosses zero at the boundary of $E$.

\subsection{A rigidity observation}

There is also a useful consequence of this identity. Suppose, for
instance, that the conormal along the whole finite boundary were equal
to $\pm \ee_1$. Since
\[
\langle \ee_1,\ee_3\rangle=0,
\]
we would obtain
\[
\int_{\Gamma}
\langle\eta,\ee_3\rangle\,ds=0.
\]
Formula~\eqref{eq:vertical-flux-maximal-graph} would then
imply
\[
\int_{E}|\vec H|^2\,dA=0.
\]
Consequently,
\[
\vec H\equiv0
\qquad\text{on }E.
\]
Since $\Sigma$ is a translator,
\[
\vec H=\ee_3^\perp,
\]
and therefore $\ee_3$ is tangent to $E$ everywhere. It follows that
$E$ is a vertical minimal surface, hence a portion of a vertical
plane.

Thus, unless the translator is flat, the finite boundary of $E$ cannot have conormal identically equal to
$\pm \ee_1$.

This observation suggests that the vertical flux contains genuinely
nontrivial information about the geometry of the interface between a
planar end and the neck region. In particular, understanding the sign
of
\[
\langle\eta,\ee_3\rangle
=
-
\frac{
\langle\nabla_\Sigma\psi,\ee_3^\top\rangle
}{
|\nabla_\Sigma\psi|
}
\]
along the regular part of $\partial E$ could provide useful
information about both the geometry of the neck and the curvature
energy carried by $E$.

As in \cite{HMW-1}, note that reflection in a vertical plane is an isometry of the Ilmanen metric and preserves translators. If 
$$\eta=\pm \ee_1,$$
along a boundary component $\Gamma$ then $x_1$  is constant along $\Gamma$, so $\Gamma$ lies in a vertical plane 
$$V=\{x_1=c\},$$ and the surface meets $V$ orthogonally. Under the appropriate regularity assumptions one can therefore reflect across $V$. The doubled surface has only planar ends; if it is complete, proper, and of finite topology (see also Remark \ref{rem:Neves-Tian}), Theorem \ref{thm:planar_wings} would immediately imply that the double is a vertical plane. This gives a second route to essentially the same nonexistence result. So, this rigidity result can also be reformulated as follows:
 \begin{thm}
     A collapsed graphical end of a translator, in the sense of Definition \ref{dfn:collapsed_graphical_end}, cannot meet a vertical plane orthogonally along its entire compact boundary.
 \end{thm}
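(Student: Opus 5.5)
The plan is to argue by contradiction through the Neves--Tian identity in the form \eqref{eq:vertical-flux-maximal-graph}, rather than by reflection, since the reflection route needs extra regularity and completeness of the doubled surface. Let $E$ be a collapsed graphical end as in Definition \ref{dfn:collapsed_graphical_end}, normalized so that $v=\ee_1$ and $0\in\Omega$. Suppose $E$ meets a vertical plane $V$ orthogonally along its entire compact boundary $\Gamma=\partial E$. Because $V$ is vertical, its unit normal $n_V$ is horizontal. Orthogonal contact means that the outward conormal of $E$ along $\Gamma$ equals $\pm n_V$, so $\langle\eta,\ee_3\rangle\equiv 0$ on $\Gamma$.

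The first step is to justify \eqref{eq:vertical-flux-maximal-graph} for $E$ itself. Truncate $E$ by the cylinders $C_R$ and apply Lemma \ref{lem:neves_tian} to the compact piece $E(R)$, whose boundary is $\Gamma$ together with the lifted circle $X(S_R)$ (for $R$ large, using the graphical description). Since $u$ is a bounded exterior solution, Theorem \ref{thm:optimal_decay} gives $|\nabla u|\le C\,G_L$. The argument of Lemma \ref{lem:graph_flux_decay} then shows that the flux through $X(S_R)$ is $O(R^{-1/2})$. Letting $R\to\infty$ and using monotone convergence on the left-hand side, we obtain
\[
\int_E|\vec H|^2\,dA=\int_\Gamma\langle\eta,\ee_3\rangle\,ds .
\]
This step does not need $E$ to lie in the region $\{\psi\neq0\}$. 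It only needs $E$ to be the graph over $[v]^\perp\setminus\Omega$ with smooth compact boundary $\Gamma$, so that the divergence theorem applies.

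Inserting $\langle\eta,\ee_3\rangle=0$ gives $\int_E|\vec H|^2=0$. Hence $\vec H=\ee_3^\perp\equiv0$ on $E$, so $\ee_3$ is tangent to $E$ everywhere. Then $E$ is a vertical minimal surface, i.e. a ruled piece of a vertical plane. A graph over $[\ee_1]^\perp\setminus\Omega$ that lies in a vertical plane must be $u\equiv c$, so $E$ is a vertical plane $\{x_1=c\}$ minus a disk-like region.

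The last step is to rule out orthogonal contact in this flat case. For a flat $E=\{x_1=c\}\setminus\Omega$, the conormal along $\Gamma$ is tangent to $\{x_1=c\}$ and is the planar outward normal of $\partial\Omega$. Orthogonal contact with $V$ would force this tangent direction to be $\pm n_V$ at every point of $\Gamma$. That is impossible, because the outward normal of the closed curve $\partial\Omega$ sweeps through all directions in $\{x_1=c\}$, including $\ee_3$. This contradiction proves the theorem.

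The main obstacle I expect is the first step: passing to the limit in the boundary flux requires the sharp gradient bound $|\nabla u|\le C G_L$ rather than the weaker bounds in Proposition \ref{prop:poelsekapsel}. That bound is exactly what Theorem \ref{thm:optimal_decay} supplies.
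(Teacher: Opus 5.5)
Your proof is correct and follows the paper's primary route. You apply the Neves--Tian identity (Lemma \ref{lem:neves_tian}) to $E\cap C_R$, kill the outer boundary term with the sharp gradient bound via Lemma \ref{lem:graph_flux_decay}, use the horizontal conormal to make $\int_\Gamma\langle\eta,\ee_3\rangle\,ds$ vanish, and conclude $\vec H\equiv0$, so $E$ is flat. Your version is slightly more complete than the paper's: it handles an arbitrary vertical plane rather than only $\eta=\pm\ee_1$, and it explicitly rules out the flat case, which the paper leaves implicit in ``unless the translator is flat''.
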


This formulation also elucidates the relation between free-boundary translators and translators without boundary obtained by reflection.

The preceding flux argument is not specific to translating solitons.
Indeed, consider more generally a $[\varphi,\ee_3]$-minimal surface
$\Sigma\subset\mathbb{R}^3$, that is,
\[
\vec H=\dot\varphi(x_3)\ee_3^\perp.
\]
These surfaces are critical points of the weighted area functional
$\int_\Sigma e^\varphi\,dA$; see
Martínez--Martínez-Triviño~\cite{Trivino}.
If $\mu=x_3|_\Sigma$ denotes the height function, then
\cite[Lemma~2.1]{Trivino} gives
\[
\Delta_\Sigma\mu
=\dot\varphi(\mu)\bigl(1-|\nabla_\Sigma\mu|^2\bigr),
\qquad
\dot\varphi(\mu)^2
=\dot\varphi(\mu)^2|\nabla_\Sigma\mu|^2+|\vec H|^2.
\]
Consequently, wherever $\dot\varphi\neq0$,
\[
\Delta_\Sigma\mu
=
\frac{|\vec H|^2}{\dot\varphi(\mu)},
\]
and hence, for every compact domain $M\subset\Sigma$,
\[
\int_{\partial M}\langle\eta,\ee_3\rangle\,ds
=
\int_M\frac{|\vec H|^2}{\dot\varphi(x_3)}\,dA.
\]
Thus, whenever $\dot\varphi$ has a fixed nonzero sign, the same
rigidity mechanism applies: if the vertical flux vanishes both at
infinity and along the finite boundary of a graphical region, then
$H\equiv0$ there. The translator case corresponds to
$\varphi(x_3)=x_3$ and recovers the flux identities used above, see also Neves-Tian~\cite{Neves-Tian}.

\subsection{Vertical flux and weighted area excess}
\label{subsec:vertical-flux}

The flux in the translating direction has a different nature from the
horizontal flux considered above. Rather than being conserved, the
ordinary $\ee_3$-flux measures exactly the curvature energy contained in
the remaining part of a planar end. A weighted version admits a similar
interpretation in terms of the weighted area excess over the asymptotic
vertical plane.

Let
\[
X(x_2,x_3)=\bigl(u(x_2,x_3),x_2,x_3\bigr),
\qquad (x_2,x_3)\in A_{R_0},
\]
be a horizontal collapsed graphical end as in Definition \ref{dfn:collapsed_graphical_end}, and let
\[
E_R:=X(A_R),
\qquad
A_R=\mathbb{R}^2\setminus B_R(0).
\]
For $R>R_0$, denote by $\eta_R$ the conormal to $X(S_R)$ pointing
toward infinity in $E_R$, and define
\begin{equation}
\mathcal{F}_3(R)
:=
\int_{X(S_R)}
\langle \eta_R,\ee_3\rangle\,ds_\Sigma .
\label{eq:vertical-flux}
\end{equation}

We first observe that $\mathcal{F}_3(R)$ has an exact geometric
interpretation. Let $R<\rho$ and consider the compact annular region
\[
E_{R,\rho}:=X\bigl(B_\rho(0)\setminus B_R(0)\bigr).
\]
By the Neves--Tian flux identity (Lemma \ref{lem:neves_tian}),
\[
\int_{E_{R,\rho}} |\vec H|^2\,dA
=
\int_{\partial E_{R,\rho}}
\langle\eta,\ee_3\rangle\,ds_\Sigma .
\]
With our choice of orientations, this becomes
\begin{equation}
\int_{E_{R,\rho}} |\vec H|^2\,dA
=
\mathcal{F}_3(\rho)-\mathcal{F}_3(R).
\label{eq:vertical-flux-annulus}
\end{equation}
On the other hand, Lemma~\ref{lem:graph_flux_decay} gives
\[
\mathcal{F}_3(\rho)\longrightarrow 0
\qquad\text{as }\rho\to\infty.
\]
Letting $\rho\to\infty$ in
\eqref{eq:vertical-flux-annulus}, we obtain
\begin{equation}
\mathcal{F}_3(R)
=
-\int_{E_R}|\vec H|^2\,dA .
\label{eq:vertical-flux-energy}
\end{equation}
In particular,
\[
\mathcal{F}_3(R)\leq 0
\]
and
\[
\mathcal{F}_3(R)\nearrow 0
\qquad\text{as }R\to\infty.
\]
Thus the vertical flux through a cross-section of a planar end measures
exactly the curvature energy remaining beyond that cross-section. See also Section \ref{sec:Poho}.

There is a closely related weighted identity. Since $\Sigma$ is a
translator,
\[
\vec H=\ee_3^\perp,
\]
and hence
\[
|\ee_3^\top|^2+|\vec H|^2=1.
\]
Moreover,
\[
\operatorname{div}_\Sigma(\ee_3^\top)
=
\Delta_\Sigma x_3
=
|\vec H|^2.
\]
It follows that
\begin{align}
\operatorname{div}_\Sigma
\left(e^{x_3}\ee_3^\top\right)
&=
e^{x_3}
\left(
|\ee_3^\top|^2+
\operatorname{div}_\Sigma(\ee_3^\top)
\right)
\nonumber\\
&=
e^{x_3}.
\label{eq:weighted-vertical-divergence}
\end{align}

The corresponding weighted vertical flux itself is not expected to
decay on a planar end, since the asymptotic vertical plane contributes
a nonzero term. It is therefore natural to subtract the flux of that
plane. Define
\begin{equation}
\widetilde{\mathcal{F}}_3(R)
:=
\int_{X(S_R)}
e^{x_3}\langle\eta_R,\ee_3\rangle\,ds_\Sigma
-
\int_{S_R}e^{x_3}\mu_3\,ds_0,
\label{eq:renormalized-vertical-flux}
\end{equation}
where $\mu=(\mu_2,\mu_3)$ is the outward Euclidean unit normal to
$S_R$.

Recall that, if
$
W=\sqrt{1+|\nabla u|^2},
$
then
\[
\langle\eta_R,\ee_3\rangle\,ds_\Sigma
=
\left(
W\mu_3-\frac{u_\mu u_3}{W}
\right)ds_0.
\]
Consequently,
\begin{equation}
\widetilde{\mathcal{F}}_3(R)
=
\int_{S_R}
e^{x_3}
\left[
(W-1)\mu_3
-
\frac{u_\mu u_3}{W}
\right]ds_0.
\label{eq:renormalized-flux-graph}
\end{equation}
Using (as in \eqref{eq:W_diff})
\[
|W-1|\leq C|\nabla u|^2
\]
and
\[
\left|\frac{u_\mu u_3}{W}\right|
\leq |\nabla u|^2,
\]
we get
\[
|\widetilde{\mathcal{F}}_3(R)|
\leq
C\int_{S_R}e^{x_3}|\nabla u|^2\,ds_0.
\]
The sharp gradient estimate
\[
|\nabla u(x)|
\leq
C e^{-x_3/2}
K_0\left(\frac{|x|}{2}\right)
\]
therefore gives
\[
|\widetilde{\mathcal{F}}_3(R)|
\leq
CRK_0\left(\frac{R}{2}\right)^2
=
O(e^{-R}).
\]
In particular,
\begin{equation}
\widetilde{\mathcal{F}}_3(R)
\longrightarrow0
\qquad\text{as }R\to\infty.
\label{eq:renormalized-flux-zero}
\end{equation}

We can now identify this renormalized flux exactly. Applying
\eqref{eq:weighted-vertical-divergence} to
$E_{R,\rho}$ gives
\[
\int_{X(S_\rho)}
e^{x_3}\langle\eta,\ee_3\rangle\,ds_\Sigma
-
\int_{X(S_R)}
e^{x_3}\langle\eta,\ee_3\rangle\,ds_\Sigma
=
\int_{B_\rho\setminus B_R}e^{x_3}W\,dx_2dx_3.
\]
Applying the same identity to the vertical plane gives
\[
\int_{S_\rho}e^{x_3}\mu_3\,ds_0
-
\int_{S_R}e^{x_3}\mu_3\,ds_0
=
\int_{B_\rho\setminus B_R}e^{x_3}\,dx_2dx_3.
\]
Subtracting these two identities yields
\[
\widetilde{\mathcal{F}}_3(\rho)
-
\widetilde{\mathcal{F}}_3(R)
=
\int_{B_\rho\setminus B_R}
e^{x_3}(W-1)\,dx_2dx_3.
\]
Letting $\rho\to\infty$ and using
\eqref{eq:renormalized-flux-zero}, we arrive at
\begin{equation}
-\widetilde{\mathcal{F}}_3(R)
=
\int_{A_R}
e^{x_3}
\left(
\sqrt{1+|\nabla u|^2}-1
\right)
\,dx_2dx_3.
\label{eq:weighted-area-excess}
\end{equation}

Thus the negative of the renormalized vertical flux is precisely the
weighted area excess of the planar end over its asymptotic vertical
plane. In particular, this quantity is finite and nonnegative.

The identities
\eqref{eq:vertical-flux-energy} and
\eqref{eq:weighted-area-excess} provide two complementary measurements
of the geometry of a planar end. The ordinary vertical flux measures
the remaining $L^2$-curvature,
\[
-\mathcal{F}_3(R)
=
\int_{E_R}|\vec H|^2\,dA,
\]
whereas the renormalized weighted vertical flux measures the remaining
weighted area excess,
\[
-\widetilde{\mathcal{F}}_3(R)
=
\int_{A_R}
e^{x_3}(W-1)\,dx_2dx_3.
\]
Together with the conserved horizontal flux, these identities provide
natural quantities associated with a planar exterior piece and may be
useful as matching data in gluing constructions.


\section{Pohozaev-type interpretations}\label{sec:Poho}

There is a natural way to place the preceding flux argument in the framework of Pohozaev-type identities. Let $X$ be a conformal Killing field
of $\mathbb{R}^3$, so that
\[
\mathcal{L}_X g_{\mathrm{Euc}}=2\varphi\,g_{\mathrm{Euc}}
\]
for some function $\varphi$. On any surface holds
\[
\operatorname{div}_\Sigma X^\top
=
2\varphi+\langle X,\vec H\rangle.
\]
If $M\subset\Sigma$ is a compact domain in a
translator, since
\[
\vec H=\ee_3^\perp,
\]
integration gives the Pohozaev--Minkowski identity
\begin{equation}
\int_{\partial M}\langle X,\eta\rangle\,ds
=
\int_M
\left(
2\varphi+\langle X,\ee_3^\perp\rangle
\right)dA,
\label{eq:pohozaev-translator}
\end{equation}
where $\eta$ is the outward unit conormal.

Integral formulae of this type for hypersurfaces, or more generally curvature problems in Riemannian geometry, in the presence of conformal Killing fields are classical; see, for instance, \cite{AlbujerAledoAlias, BentRod}. Related Pohozaev-type obstructions involving
conformal vector fields appear in \cite{AmmannHumbertAhmedou}.

The vertical flux identity used above is the simplest member of
\eqref{eq:pohozaev-translator}. Namely, taking $X=\ee_3$, which is a
Killing field of $(\mathbb{R}^3,g_{\mathrm{Euc}})$, which is a conformal Killing field for the Ilmanen metric, gives $\varphi=0$ and
\[
\langle \ee_3,\ee_3^\perp\rangle
=
|\ee_3^\perp|^2
=
|\vec H|^2.
\]
Consequently, we recover \eqref{NTas}:
\begin{equation}\label{PohoNTas}
\int_{\partial M}\langle\eta,\ee_3\rangle\,ds
=
\int_M|\vec H|^2\,dA.
\end{equation}
Thus the nonexistence argument considered above may itself be viewed as
a Pohozaev-type argument: if the finite boundary of a planar-ended
region has conormal $\eta=\pm \ee_1$, then its contribution to the
$\ee_3$-flux vanishes, while the contribution at the planar end tends to
zero. The right-hand side is nonnegative, and hence
$\vec H\equiv0$.

Proper conformal Killing fields give additional identities. For
example, for the dilation field
\[
X(x)=x-p
\]
one has $\varphi=1$, and therefore
\begin{equation}
\int_{\partial M}
\langle x-p,\eta\rangle\,ds
=
\int_M
\left(
2+\langle x-p,\ee_3^\perp\rangle
\right)dA.
\label{eq:dilation-pohozaev}
\end{equation}
Such identities may possibly provide further obstructions when the geometry of
the finite boundary makes the left-hand side vanish or have a
prescribed sign. In the noncompact planar-ended setting the boundary
term at infinity need not vanish for a dilation field, so one should
expect to use a renormalized version of
\eqref{eq:dilation-pohozaev}, subtracting the corresponding identity
for the asymptotic vertical plane. The sharp decay estimates proved in
this paper, namely in Theorem \ref{thm:optimal_decay}, are strong enough to make such renormalized boundary terms
natural candidates for finite quantities.

There is also a particularly simple weighted formulation. Recall that
a translator is minimal for the Ilmanen metric
\[
g_I=e^{x_3}g_{\mathrm{Euc}}.
\]
If $X$ is Euclidean conformal Killing with conformal factor $\varphi$, then
\[
\mathcal L_X g_I
=
\bigl(2\varphi+\langle X,\ee_3\rangle\bigr)g_I.
\]
Since $\Sigma$ is minimal with respect to $g_I$, the corresponding
Minkowski formula becomes
\begin{equation}
\int_{\partial M}
e^{x_3}\langle X,\eta\rangle\,ds
=
\int_M
e^{x_3}
\bigl(2\varphi+\langle X,\ee_3\rangle\bigr)\,dA.
\label{eq:weighted-pohozaev}
\end{equation}
This single identity contains several of the flux formulae considered
above. Horizontal translations $X=\ee_1,\ee_2$ give the conserved weighted
horizontal fluxes, whereas $X=\ee_3$ gives
\[
\int_{\partial M}
e^{x_3}\langle\eta,\ee_3\rangle\,ds
=
\int_M e^{x_3}\,dA.
\]
For the dilation field $X=x-p$, one obtains
\[
\int_{\partial M}
e^{x_3}\langle x-p,\eta\rangle\,ds
=
\int_M
e^{x_3}\bigl(2+x_3-p_3\bigr)\,dA.
\]
It would in this connection be interesting to investigate further whether suitable choices of conformal Killing fields yield further nonexistence or rigidity statements for planar-ended translators.


\end{document}